\newtheorem{thm}{Theorem}[section]
\newtheorem{prop}[thm]{Proposition} 
\newtheorem{lemma}[thm]{Lemma}
\newtheorem{cor}[thm]{Corollary}
\theoremstyle{definition} 
\newtheorem{dfn}[thm]{Definition}
\theoremstyle{remark}
\newtheorem{remark}[thm]{Remark}
\newtheorem{ex}[thm]{Example}
\numberwithin{equation}{section}
\newcommand{\vbgs}[4]{
\xymatrix{#1 \ar[d] \ar@<2pt>[r] \ar@<-2pt>[r] & #2 \ar[d] \\ #3 \ar@<2pt>[r] \ar@<-2pt>[r] & #4}}
 \newcommand{\arrows}{\rightrightarrows}
 \newcommand{\defequal}{:=}
 \newcommand{\tolabel}[1]{\stackrel{#1}{\to}}
 \newcommand{\reals}{{\mathbb R}}
 \newcommand{\bitimes}[2]{\,_{#1}\!\times_{#2}}
 \newcommand{\VB}{\mathcal{VB}}
 \newcommand{\iso}{\cong}
 \newcommand{\toiso}{\stackrel{\sim}{\to}}
 \DeclareMathOperator{\im}{im}
 \DeclareMathOperator{\coker}{coker}
 \DeclareMathOperator{\Hom}{Hom}
 \newcommand{\boundary}{\partial}
 \newcommand{\Etot}{\mathcal{E}}
 \newcommand{\totaldiff}{\mathcal{D}}
 \newcommand{\id}{\mathrm{id}}
 \newcommand{\nue}[1]{\mathring{#1}}
 \newcommand{\lin}{\mathrm{lin}} 
 \newcommand{\framegpd}{\mathcal{G}} 
 \newcommand{\framecat}{\mathcal{C}} 
 \newcommand{\fat}[1]{\hat{\mathcal{G}}(#1)} 
 \newcommand{\fatcat}[1]{\hat{\mathcal{C}}(#1)} 
 \newcommand{\orbit}{\mathcal{O}} 
 \newcommand{\vbiso}{\Theta} 
 \newcommand{\suchthat}{\mid}
 \newcommand{\pair}[2]{\langle #1 \; \vert \; #2 \rangle}
 \newcommand{\du}[1]{\check #1}
 \newcommand{\modsim}{\!\sim}
\begin{document}
\title{$\VB$-groupoids and representation theory of Lie groupoids}
\author{Alfonso Gracia-Saz}
\address{Department of Mathematics\\
University of Toronto\\
40 Saint George Street, Room 6290\\
Toronto, Ontario, Canada M5S 2E4}
\email{alfonso@math.toronto.edu}
\author{Rajan Amit Mehta}
\address{Department of Mathematics \& Statistics\\
Smith College\\
44 College Lane\\
Northampton, MA 01063}
\email{rmehta@smith.edu}

\keywords{Lie groupoid, representation up to homotopy, VB-groupoid, Ehresmann double, adjoint representation}
\subjclass[2010]{18D05, 22A22, 53D17}

\begin{abstract}
	A $\VB$-groupoid is a Lie groupoid equipped with a compatible linear structure. In this paper, we describe a correspondence, up to isomorphism, between $\VB$-groupoids and $2$-term representations up to homotopy of Lie groupoids. Under this correspondence, the tangent bundle of a Lie groupoid $G$ corresponds to the ``adjoint representation'' of $G$. The value of this point of view is that the tangent bundle is canonical, whereas the adjoint representation is not.

	We define a cochain complex that is canonically associated to any $\VB$-groupoid. The cohomology of this complex is isomorphic to the groupoid cohomology with values in the corresponding representations up to homotopy. When applied to the tangent bundle of a Lie groupoid, this construction produces a canonical complex that computes the cohomology with values in the adjoint representation.

	Finally, we give a classification of regular $2$-term representations up to homotopy. By considering the adjoint representation, we find a new cohomological invariant associated to regular Lie groupoids.
\end{abstract}

\maketitle

\section{Introduction}

Let $G$ be a Lie group with a representation on a vector space $V$, and let $G \ltimes V$ be the semidirect product. In addition to being a group, $G \ltimes V$ is a vector bundle over $G$, and the multiplication map $(G \ltimes V) \times (G \ltimes V) \to G \ltimes V$ is linear. In other words, the semidirect product is a \emph{group object in the category of vector bundles}.

Conversely, let $\Gamma \to G$ be a group object in the category of (smooth) vector bundles. That is, let $\Gamma \to G$ be a vector bundle equipped with a Lie group structure, such that the multiplication map $\Gamma \times \Gamma \to \Gamma$ is linear. Then $G$ automatically inherits a Lie group structure. Furthermore, using right-translation by zero-vectors, we can trivialize $\Gamma$, allowing us to canonically identify it with a semidirect product $G \ltimes \Gamma_e$, where $\Gamma_e$ is the fiber over the identity element $e \in G$.

Thus there is a one-to-one correspondence between Lie group representations and group objects in the category of vector bundles. In particular, the adjoint representation of $G$ on its Lie algebra $\mathfrak{g}$ corresponds to the tangent bundle $TG$, and the coadjoint representation corresponds to the cotangent bundle $T^*G$.

The goal of this paper is to extend the above correspondence to the setting of Lie groupoids. The situation here is complicated by the fact that Lie groupoids do not in general possess well-defined adjoint representations. Rather, as was observed by Evens, Lu, and Weinstein \cite{elw}, there is a sense in which a Lie groupoid $G \arrows M$ possesses a natural ``representation up to homotopy'' on the $2$-term complex $A \to TM$, where $A$ is the Lie algebroid of $G$. 

The notion of representation up to homotopy was refined by Arias Abad and Crainic \cite{aba-cra:rephomgpd}, who gave explicit formulas for the adjoint representation up to homotopy. However, their construction relies on the choice of a certain Ehresmann connection on $G$, so it is not canonical in the strictest sense, though it is canonical up to isomorphism.

Our approach is to consider \emph{$\VB$-groupoids} as geometric models for representations up to homotopy. Essentially, a $\VB$-groupoid is a Lie groupoid with a compatible linear structure, making it a \emph{groupoid object in the category of vector bundles}. $\VB$-groupoids were first introduced by Pradines \cite{pradines2} in relation to the theory of symplectic groupoids \cite{cdw, karasev, weinstein-symplectic} and have played an important role in the study of double structures by Kirill Mackenzie and his collaborators \cite{mac:drinfeld, mac:dblie2, mac:duality, mac:ehresmann}. Given a Lie groupoid $G \arrows M$ with Lie algebroid $A$, one can construct two naturally-associated $\VB$-groupoids: the tangent bundle $TG \arrows TM$ and the cotangent bundle $T^*G \arrows A^*$. The latter is the standard example of a symplectic groupoid.

Given a Lie groupoid $G \arrows M$ with a representation up to homotopy on a $2$-term complex $C \to E$ of vector bundles over $M$, we may construct an associated $\VB$-groupoid which can be viewed as a semidirect product of $G$ with $C \to E$. On the other hand, we will see that any $\VB$-groupoid is noncanonically isomorphic to a semidirect product. Although the resulting representation up to homotopy depends on the choice, different choices lead to representations up to homotopy that are isomorphic. We thus obtain a one-to-one correspondence, up to isomorphism, between $\VB$-groupoids and $2$-term representations up to homotopy. 

In particular, the tangent and cotangent bundles correspond to the adjoint and coadjoint representations up to homotopy. This fact is particularly pleasing, since it gives us \emph{canonical} models for the adjoint and coadjoint representations up to homotopy.

To any $\VB$-groupoid, we can associate a cochain complex that is isomorphic to the complex of groupoid cochains with values in the corresponding representation up to homotopy. As an immediate application,  we obtain a canonical model for the cohomology of a Lie groupoid with values in its adjoint representation.

The perspective of $\VB$-groupoids allows us to classify regular $2$-term representations up to homotopy. Part of the classification involves a certain cohomology class, which, in the case of the adjoint representation, becomes an invariant of the Lie groupoid itself.

This paper is the companion of an earlier paper \cite{gs-m:vbalg}, where $\VB$-algebroids were studied in relation to representations up to homotopy of Lie algebroids. Many of the results of this paper can be seen as global analogues of results in \cite{gs-m:vbalg}. It is known \cite{mac:dblie2} that $\VB$-groupoids are the global objects corresponding to $\VB$-algebroids, so the theory of $\VB$-groupoids and $\VB$-algebroids provide a natural framework for understanding differentiation and integration of representations up to homotopy (at least in the $2$-term case). In the time since a preprint version of this paper was first posted on the arXiv in 2010, this has in fact been done in \cite{bco, bch}. Particularly, in \cite{bco} the perspective of $\VB$-groupoids and $\VB$-algebroids was used to fully characterize the obstructions to integrability of $2$-term representations up to homotopy.

\subsection*{Structure of the paper}

\begin{itemize}
\item  In \S\ref{sec:repres}, we recall Arias Abad and Crainic's notion of representation up to homotopy of a Lie groupoid \cite{aba-cra:rephomgpd}. 
\item  In \S\ref{sec:vbg}, we recall the definition and basic facts about $\VB$-groupoids, including the notion of horizontal lift. In \S\ref{sec:formulas}, we give formulas (depending on the choice of a horizontal lift) for the representations up to homotopy arising from a $\VB$-groupoid. The semidirect product construction appears in Example \ref{ex:semi2}.
\item In \S\ref{sec:dual}, we briefly review the construction of the dual of a $\VB$-groupoid.
\item The heart of the paper is \S\ref{sec:vbsuperrep}, where we construct a canonical cochain complex associated to any $\VB$-groupoid. A choice of horizontal lift allows us to produce a representation up to homotopy. In Appendix \ref{appendix:derivation}, we show that the representations up to homotopy arising in this manner agree with the formulas given in \S\ref{sec:formulas}. In \S\ref{sec:dependence}, we study how the representation up to homotopy depends on the choice of horizontal lift. 
\item  In \S\ref{sec:moduli}, we prove (Corollary \ref{cor:big}) that isomorphism classes of $\VB$-groupoids are in one-to-one correspondence with isomorphism classes of $2$-term representations up to homotopy. Then, in Theorem \ref{thm:classification}, we give a classification of $\VB$-groupoids (and hence $2$-term representations up to homotopy) satisfying a regularity condition.
\item  The classification result of Theorem \ref{thm:classification} involves a cohomological invariant. This characteristic class is a ``higher categorical'' invariant, in the sense that it contains information about the ``homotopy,'' rather than the ``representation.'' In \S \ref{sec:adjoint}, we consider the geometric interpretation of this invariant in the case of the $\VB$-groupoid $TG$. 
\item In \S\ref{sec:fat}, we describe a different (but equivalent) approach to constructing representations up to homotopy from $\VB$-groupoids, via something we call the \emph{fat category}. This approach helps to clarify the relationship between the idea originally outlined by Evens, Lu, and Weinstein \cite{elw} and the Arias Abad-Crainic definition of representation up to homotopy.
\end{itemize}

\subsection*{Acknowledgements}

We would like to thank the Department of Mathematics at Washington University in Saint Louis, as well as Eckhard Meinrenken, for funding visits during which this work was developed.  We thank Jim Stasheff for providing useful comments on a draft of this paper.  We thank Kirill Mackenzie, Ping Xu, and Mathieu Sti\'enon for interesting discussions related to this work. We also thank the anonymous referees for their careful reading and numerous suggestions that improved the clarity of the paper.

\section{Groupoid representations and representations up to homotopy}\label{sec:repres}

In this section, we review Lie groupoid representations from the cohomological point of view, from which the generalization to representations up to homotopy is straightforward. The material on representations and cohomology is standard and can be found in, e.g., \cite{mac:book}. The material on representations up to homotopy essentially follows that of \cite{aba-cra:rephomgpd}.  

The main concern of this paper is $2$-term representations up to homotopy, and in \S\S\ref{sec:2term}--\ref{sec:gauge} we specialize to this case.

We remark that, although we will be working in the smooth category, the general theory of representations up to homotopy and $\VB$-groupoids goes through in the topological category. The key points where smoothness is used are to define the ``adjoint representation'' via the tangent bundle and to prove existence of decompositions in \S\ref{sec:cores}.

\subsection{Lie groupoid representations}
Let $E \to M$ be a vector bundle. The \emph{frame groupoid} $\framegpd(E)$ is the groupoid whose set of objects is $M$ and whose morphisms are isomorphisms $E_x \toiso E_y$ for $x,y \in M$. The frame groupoid is a Lie groupoid; we refer the reader to \cite{mac:book} for details.

Let $G \arrows M$ be a Lie groupoid. A \emph{representation} of $G$ is a vector bundle $E \to M$ and a Lie groupoid morphism $\Delta: G \to \framegpd(E)$.

\begin{ex}
When $M$ is a point, then $G$ is a Lie group, $E$ is a vector space, and $\framegpd(E)$ is the general linear group on $E$. Thus we recover the usual notion of Lie group representation.
\end{ex}

\begin{ex}\label{example:pair}
When $G = M \times M$ is a pair groupoid, a representation of $G$ on $E$ is equivalent to a trivialization of $E$. When $G$ is the fundamental groupoid of a manifold $M$, then a representation of $G$ on $E$ is equivalent to a flat connection on $E$. These examples demonstrate that the notion of Lie groupoid representation is too restrictive. For example, if $E \to M$ is nontrivializable, then there do not exist any representations at all of the pair groupoid $M \times M$ on $E$.
\end{ex}

\subsection{Lie groupoid cohomology}

In order to arrive at a natural definition of representation up to homotopy, we will need to restate the definition of Lie groupoid representation in cohomological terms. We first recall the notion of Lie groupoid cohomology.

Let $G \arrows M$ be a Lie groupoid with source and target maps $s,t: G \to M$. Let $G^{(0)} \defequal M$, and for $p>0$ let $G^{(p)}$ be the manifold consisting of composable $p$-tuplets of elements of $G$. In other words, 
\[ G^{(p)} \defequal G \bitimes{s}{t} \cdots \bitimes{s}{t} G = \{(g_1, \dots, g_p) \suchthat s(g_i) = t(g_{i+1})\}. \]

The space of ($\reals$-valued) smooth groupoid $p$-cochains is $C^p(G) \defequal C^\infty(G^{(p)})$. 
There is a coboundary operator $\delta: C^p(G) \to C^{p+1}(G)$ on the space of cochains, which for $p=0$ is given by 
\begin{equation*}
	(\delta f)(g) = f(s(g)) - f(t(g))
\end{equation*}
for $f \in C^0(G) = C^\infty(M)$ and $g \in G$, and for $p>0$ given by
\begin{equation*}
\begin{split}
  	 (\delta f) (g_0, \dots, g_p) = & f(g_1, \dots, g_p) + \sum_{i=1}^p (-1)^i f(g_0, \dots, g_{i-1} g_i, \dots, g_p) \\
&+ (-1)^{p+1} f(g_0, \dots, g_{p-1})  
\end{split}
\end{equation*}
for $f \in C^p(G)$ and $(g_0, \dots, g_p) \in G^{(p+1)}$. The equation $\delta^2 = 0$ is a consequence of the groupoid axioms. The cohomology of the complex $(C^\bullet(G), \delta)$ is known as the \emph{smooth groupoid cohomology} of $G$.

There is a product $C^p(G) \times C^q(G) \to C^{p+q}(G)$, $(f_1,f_2) \mapsto f_1 \star f_2$, given by
\[ (f_1 \star f_2)(g_1, \dots, g_{p+q}) = f_1(g_1, \dots, g_p) f_2 (g_{p+1}, \dots, g_{p+q})\]
for $f_1 \in C^p(G)$, $f_2 \in C^q(G)$, and $p,q > 0$. If $p=0$, $q>0$, then
\begin{equation}\label{eqn:star1}
 (f_1 \star f_2)(g_1, \dots, g_q) = f_1(t(g_1)) f_2 (g_1, \dots, g_q),	
\end{equation}
and if $q=0$, $p>0$, then
\begin{equation}\label{eqn:star2}
(f_1 \star f_2)(g_1, \dots, g_p) = f_1(g_1, \dots, g_p) f_2 (s(g_p)).
\end{equation}
If $p=q=0$, then
\begin{equation}\label{eqn:star3}
f_1 \star f_2 = f_1 f_2.	
\end{equation}
The coboundary operator $\delta$ is a graded derivation with respect to the product:
\begin{equation*}
\delta(f_1 \star f_2) = (\delta f_1) \star f_2 + (-1)^{|f_1|} f_1 \star (\delta f_2).
\end{equation*}

A cochain $f \in C^p(G)$, $p > 0$, is called \emph{normalized} if $f$ vanishes whenever at least one of its arguments is a unit. By definition, every $0$-cochain is considered to be normalized. The space of normalized cochains is closed under the coboundary operator $\delta$ and under the product $\star$. 

\subsection{Lie groupoid cohomology with values in a representation}\label{sec:cohomologyrep}

Let $G \arrows M$ be a Lie groupoid, and let $E \to M$ be a vector bundle.  The space of smooth groupoid $p$-cochains with values in $E$ is $C^p(G;E) \defequal \Gamma ((\pi_0^p)^* E)$, where $\pi^p_0: G^{(p)} \to M$ is the identity for $p=0$ and is given by $\pi^p_0(g_1, \dots, g_p) = t(g_1)$ for $p>0$. More concretely, if $\omega \in C^p(G;E)$, then $\omega(g_1, \dots, g_p)$ is an element of $E_{t(g_1)}$.

There is a right $C(G)$-module structure on $C(G;E)$, given by 
\[ (\omega \star f)(g_1, \dots, g_{p+q}) = \omega(g_1, \dots, g_p) f(g_{p+1}, \dots, g_{p+q})\]
for $\omega \in C^p(G;E)$, $f \in C^q(G)$, and $p,q>0$. When $p$ or $q$ is zero, the formula for $\omega \star f$ is similar to equations \eqref{eqn:star1}--\eqref{eqn:star3}.

The space $C^p(G;E)$, as the space of sections of a pullback bundle, can be identified with $\Gamma(E) \otimes_{C^\infty(M)} C^p(G)$, where the tensor structure is given by $\varepsilon \phi \otimes f = \varepsilon \otimes (\phi \star f)$ for $\varepsilon \in \Gamma(E)$, $\phi \in C^\infty(M)$, and $f \in C^p(G)$. In particular, we see that $C(G;E)$ is generated as a right $C(G)$-module by $\Gamma(E) = C^0(G;E)$.

Given a representation $\Delta$ of $G$ on $E$, we can construct a degree $1$ operator $D$ on $C(G;E)$, whose action on $0$-forms is given by 
\begin{equation*}
	(D\varepsilon)(g) = \Delta_g \varepsilon_{s(g)} - \varepsilon_{t(g)}
\end{equation*}
for $\varepsilon \in \Gamma(E)$ and $g \in G$, and for $p>0$ given by
\begin{equation}\label{eqn:domega}
\begin{split}
  	 (D\omega) (g_0, \dots, g_p) =& \Delta_{g_0} \omega (g_1, \dots, g_p) + \sum_{i=1}^p (-1)^i \omega(g_0, \dots, g_{i-1} g_i, \dots, g_p) \\
&+ (-1)^{p+1} \omega(g_0, \dots, g_{p-1})   
\end{split}
\end{equation}
for $\omega \in C^p(G;E)$.  
The operator $D$ satisfies the equation $D^2=0$ and the following graded Leibniz identity:
\begin{equation}
D(\omega \star f) = (D \omega) \star f + (-1)^{|\omega|} \omega \star (\delta f). \label{eqn:leibniz}\\
\end{equation}
The cohomology of the complex $(C(G;E), D)$ is known as the smooth groupoid cohomology of $G$ with values in $E$. In the case where $E$ is the trivial real line bundle over $M$ with the trivial representation, then we recover the $\reals$-valued smooth groupoid cohomology.

We define normalized $E$-valued cochains in the same way as with $\reals$-valued cochains. The space of normalized cochains is closed under the action of $D$, and $\omega \star f$ is normalized if $\omega \in C(G;E)$ and $f \in C(G)$ are both normalized.

\subsection{Lie groupoid representations revisited}
In this section, we will proceed in the direction opposite to that of \S\ref{sec:cohomologyrep}; that is, we will begin with an operator $D$ and attempt to construct a representation $\Delta$.

Let $G \arrows M$ be a Lie groupoid, let $E \to M$ be a vector bundle, and let $D$ be a degree $1$ operator on $C(G;E)$ satisfying \eqref{eqn:leibniz}. For any $g \in G$, we may obtain a linear map $\Delta_g : E_{s(g)} \to E_{t(g)}$, given by
\begin{equation}\label{eqn:difftorep}
\Delta_g \varepsilon_{s(g)}= (D \varepsilon) (g) + \varepsilon_{t(g)}	
\end{equation}
for any $\varepsilon \in \Gamma(E)$. Using \eqref{eqn:leibniz}, one can verify that
\[ D(\varepsilon f)(g) + (\varepsilon f)_{t(g)} = (D(\varepsilon )(g) + \varepsilon_{t(g)}) f_{s(g)}\]
for any $f \in C^\infty(M)$. This implies that \eqref{eqn:difftorep} well-defines $\Delta_g$.

We may think of $\Delta: g \mapsto \Delta_g$ as a map from $G$ to the \emph{frame category} $\framecat(E)$ whose set of objects is $M$ and whose morphisms are (not necessarily invertible) linear maps $E_x \to E_y$ for $x,y \in M$. The frame category is a Lie category\footnote{A Lie category is defined in the same way as a Lie groupoid, except without an inverse map.} that contains the frame groupoid as the subcategory consisting of all invertible elements. In fact, the simplest way to prove that the frame groupoid is smooth is to recognize it as an open subset of the frame category. The map $\Delta: G \to \framecat(E)$ is smooth since $D$ is continuous, but in general $\Delta$ will not respect composition. This point motivates the following notion of \emph{quasi-action} \cite{aba-cra:rephomgpd}.

\begin{dfn}
	A \emph{quasi-action} of $G$ on $E$ is a smooth map $\Delta: G \to \framecat(E)$ that respects source and target maps. 
\end{dfn}

\begin{dfn}\label{dfn:unitalflat}
	A quasi-action $\Delta$ is called 
\begin{enumerate}
     \item \emph{unital} if $\Delta_{1_x} = \id$ for all $x \in M$,
     \item \emph{flat} if $\Delta_{g_1} \Delta_{g_2} = \Delta_{g_1 g_2}$ for all $(g_1,g_2) \in G^{(2)}$.
\end{enumerate}
\end{dfn}
Clearly, a flat and unital quasi-action is the same thing as a representation. In particular, if both conditions in Definition \ref{dfn:unitalflat} hold, then the image of $\Delta$ is contained in the frame groupoid of $E$.

\begin{ex}
     To illustrate the notion of quasi-action, we give an example where $G = S^2 \times S^2 \arrows S^2$ is the pair groupoid and $E = TS^2$. Given $(y,x) \in S^2 \times S^2$, we define a map $\Delta_{(y,x)}: T_x S^2 \to T_y S^2$ as follows. Equip $S^2$ with the standard spherical metric, where the distance between two antipodal points is $\pi$. If $x$ and $y$ are antipodal, then $\Delta_{(y,x)}$ is the zero map. Otherwise, $\Delta_{(y,x)}$ is given by parallel transport along the shortest geodesic from $x$ to $y$, together with scalar multiplication by a factor of $(1 + \cos(d(x,y)))/2$, where $d$ is the distance function. The scaling factor ensures that the map $\Delta: (y,x) \mapsto \Delta_{(y,x)}$ is smooth at antipodal pairs. The quasi-action $\Delta$ is unital but not flat. 

     We use this example to emphasize some points about quasi-actions:
\begin{itemize}
     \item The definition of quasi-action allows for the possibility of $\Delta_g$ being degenerate, as is the case for antipodal pairs in the above example. This point is crucial, since if we required every $\Delta_g$ to be nondegenerate (equivalently, if we required the image of $\Delta$ to be in the frame groupoid), then there would be no examples in the case where $G = S^2 \times S^2$ and $E = TS^2$, since such an example would imply the existence of a trivialization of $TS^2$.
     \item This example illustrates the general fact (which we will see in Example \ref{ex:trivial}) that unital quasi-actions always exist for arbitrary $G$ and $E$, although they are not canonical. For example, we can obtain other unital quasi-actions of $S^2 \times S^2$ on $TS^2$ by replacing the above scaling factor by any smooth function $f(y,x)$ on $S^2 \times S^2$ that equals $1$ when $y=x$ and $0$ when $x$ and $y$ are antipodal, such as a bump function supported on a neighborhood of the diagonal submanifold $\{(x,x)\}$. 
\end{itemize}
\end{ex}

So far, we have seen that, given a degree $1$ operator $D$ on $C(G;E)$ satisfying \eqref{eqn:leibniz}, we can obtain a quasi-action $\Delta$ defined by \eqref{eqn:difftorep}. Using \eqref{eqn:leibniz}, one can then show that $D$ must satisfy \eqref{eqn:domega} for $p > 0$. The following lemma, which we leave as an exercise (also see \cite{aba-cra:rephomgpd}), expresses in terms of $D$ the conditions for $\Delta$ to be unital and flat.

\begin{lemma} Let $D$ be a degree $1$ operator on $C(G;E)$ satisfying \eqref{eqn:leibniz}, and let $\Delta$ be the quasi-action given by \eqref{eqn:difftorep}. Then
	\begin{enumerate}
		\item $\Delta$ is flat if and only if $D^2 = 0$.
	       \item $\Delta$ is unital if and only if $D$ preserves the space of normalized cochains.
	\end{enumerate}
\end{lemma}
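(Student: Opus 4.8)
The plan is to exploit Proposition~\ref{prop:0forms}: since $C(G;E)$ is generated as a right $C(G)$-module by $C^0(G;E)=\Gamma(E)$, the operator $D$ is completely determined by its restriction to $\Gamma(E)$ together with the Leibniz identity~\eqref{eqn:leibniz}, and the first step is to make this explicit. Writing an arbitrary $\omega\in C^p(G;E)$ as a finite sum $\sum_i\varepsilon_i\star f_i$ with $\varepsilon_i\in\Gamma(E)$ and $f_i\in C^p(G)$, expanding $D\omega=\sum_i\bigl((D\varepsilon_i)\star f_i+\varepsilon_i\star\delta f_i\bigr)$ via~\eqref{eqn:leibniz}, substituting~\eqref{eqn:difftorep} for $D\varepsilon_i$, and simplifying with the composability relations $s(g_0)=t(g_1)$, $t(g_0g_1)=t(g_0)$, $s(g_0g_1)=s(g_1)$, one finds that $D$ is forced to be given by the same formula as in the representation case of \S\ref{sec:cohomologyrep}:
\begin{equation*}
(D\omega)(g_0,\dots,g_p)=\Delta_{g_0}\,\omega(g_1,\dots,g_p)+\sum_{i=1}^p(-1)^i\omega(g_0,\dots,g_{i-1}g_i,\dots,g_p)+(-1)^{p+1}\omega(g_0,\dots,g_{p-1}),
\end{equation*}
which for $p=1$ reads $(D\omega)(g_0,g_1)=\Delta_{g_0}\omega(g_1)-\omega(g_0g_1)+\omega(g_0)$. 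Everything that follows is a direct computation with this formula.

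For part~(1), I would first note that $D^2$ is right $C(G)$-linear: from~\eqref{eqn:leibniz} and $\delta^2=0$ one gets $D^2(\omega\star f)=(D^2\omega)\star f$, so by Proposition~\ref{prop:0forms} we have $D^2=0$ if and only if $D^2\varepsilon=0$ for every $\varepsilon\in\Gamma(E)$. Applying the $p=1$ and $p=2$ cases of the formula above to such an $\varepsilon$ and canceling the terms $\pm\Delta_{g_0}\varepsilon_{t(g_1)}$ and $\pm\varepsilon_{t(g_0)}$, one obtains
\begin{equation*}
(D^2\varepsilon)(g_0,g_1)=\bigl(\Delta_{g_0}\Delta_{g_1}-\Delta_{g_0g_1}\bigr)\varepsilon_{s(g_1)}.
\end{equation*}
Since every vector of $E_{s(g_1)}$ is of the form $\varepsilon_{s(g_1)}$ for some $\varepsilon\in\Gamma(E)$, this vanishes for all $\varepsilon$ and all composable pairs $(g_0,g_1)$ precisely when $\Delta_{g_0}\Delta_{g_1}=\Delta_{g_0g_1}$, that is, when $\Delta$ is flat.

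For part~(2), the key reduction is that $\Delta$ is unital if and only if $D\varepsilon$ is normalized for every $\varepsilon\in\Gamma(E)$: indeed $D\varepsilon\in C^1(G;E)$ is normalized iff $(D\varepsilon)(1_x)=0$ for all $x$, and by~\eqref{eqn:difftorep} this is exactly $\Delta_{1_x}\varepsilon_x=\varepsilon_x$. The forward implication then follows immediately, since every $0$-cochain is normalized. For the converse, assume $\Delta$ unital, let $\omega\in C^p(G;E)$ with $p\ge1$ be normalized, and suppose $g_k=1_x$; I would show $(D\omega)(g_0,\dots,g_p)=0$ by cases. If $k=0$, unitality gives $\Delta_{1_x}\omega(g_1,\dots,g_p)=\omega(g_1,\dots,g_p)$, which cancels the $i=1$ term $-\omega(1_xg_1,g_2,\dots,g_p)=-\omega(g_1,\dots,g_p)$, while every remaining term retains $g_0=1_x$ among its arguments and vanishes by normality of $\omega$. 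If $1\le k\le p-1$, the $i=k$ and $i=k+1$ terms both collapse---using $g_{k-1}1_x=g_{k-1}$ and $1_xg_{k+1}=g_{k+1}$---to $\pm\omega(g_0,\dots,g_{k-1},g_{k+1},\dots,g_p)$ with opposite signs and cancel, while every other term still contains $g_k=1_x$ and vanishes. The case $k=p$ is analogous, the $i=p$ term canceling the trailing term. Hence $D\omega$ is normalized, and $D$ preserves the normalized subcomplex.

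The bulk of the work, and the one place where care is needed, is the case analysis in the converse of part~(2): tracking which summands survive, which cancel in pairs, and checking that each cancellation uses only the groupoid identities and unitality. (An alternative that sidesteps this is to first prove the small fact that every normalized $\omega\in C^p(G;E)$ can be written as $\sum_i\varepsilon_i\star f_i$ with $\varepsilon_i\in\Gamma(E)$ and each $f_i\in C^p(G)$ normalized---a partition-of-unity argument over $M$---and then combine ``$D\varepsilon_i$ normalized'' with the facts that $\delta$ preserves normalized cochains and that $\omega\star f$ is normalized whenever $\omega$ and $f$ are.) One should also pause at the first step to confirm that the displayed formula for $D$ really is a consequence of~\eqref{eqn:leibniz} and Proposition~\ref{prop:0forms} and not an extra assumption, since the lemma concerns an abstract operator $D$.
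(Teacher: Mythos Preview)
Your proof is correct and follows the same strategy as the paper: for part~(1) you use that $D^2$ is $C(G)$-linear (via the Leibniz rule) to reduce to $0$-cochains and then compute $(D^2\varepsilon)(g_0,g_1)=(\Delta_{g_0}\Delta_{g_1}-\Delta_{g_0g_1})\varepsilon_{s(g_1)}$, which is exactly the paper's argument; for part~(2) you reduce to the statement that $D\varepsilon$ is normalized for all $\varepsilon\in\Gamma(E)$, which is again what the paper does. The only difference is one of detail: the paper compresses the ``direct computation'' and the reduction in part~(2) into single sentences, whereas you make the explicit formula for $D$ on higher cochains fully visible and carry out the case analysis on the index $k$; your parenthetical ``alternative'' (writing a normalized $\omega$ as $\sum_i\varepsilon_i\star f_i$ with each $f_i$ normalized) is in fact closer to what the paper's one-liner ``we may again use~\eqref{eqn:leibniz}'' seems to have in mind.
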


The following theorem ties together the results from this section and \S\ref{sec:cohomologyrep}.
\begin{thm}\label{thm:repop}
There is a one-to-one correspondence between representations of $G$ on $E$ and degree $1$ operators $D$ on $C(G;E)$ satisfying \eqref{eqn:leibniz}, preserving the space of normalized cochains, and such that $D^2 = 0$.
\end{thm}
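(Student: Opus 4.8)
The plan is to establish the correspondence by verifying that the two constructions already described in the preceding two subsections are mutually inverse. In \S\ref{sec:cohomologyrep} we saw how a representation $\Delta$ produces an operator $D$ via the explicit formula on $0$-forms (extended to all of $C(G;E)$ by the Leibniz rule \eqref{eqn:leibniz}, which is legitimate by Proposition \ref{prop:0forms}), and we recorded that this $D$ is continuous, satisfies $D^2=0$, and preserves normalized cochains. Conversely, \S\ref{sec:repres}'s fourth subsection shows how such a $D$ yields a quasi-action $\Delta$ via \eqref{eqn:difftorep}, and the Lemma immediately preceding the theorem shows that $D^2=0$ forces $\Delta$ to be flat while $D$ preserving normalized cochains forces $\Delta$ to be unital; a flat unital quasi-action is exactly a representation. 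So both arrows are well-defined on the stated classes of objects, and it remains only to check that composing them in either order gives the identity.

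First I would check that starting from a representation $\Delta$, forming $D$, and then recovering a quasi-action from $D$ gives back $\Delta$. This is essentially tautological: plugging the $p=0$ case of the formula defining $D$ into \eqref{eqn:difftorep} gives $\Delta_g \varepsilon_{s(g)} = (\Delta_g \varepsilon_{s(g)} - \varepsilon_{t(g)}) + \varepsilon_{t(g)} = \Delta_g \varepsilon_{s(g)}$ for all $\varepsilon \in \Gamma(E)$, and since the fibers $E_{s(g)}$ are spanned by values of global sections, this pins down $\Delta_g$ on the nose. Second, in the other direction, I would start from an operator $D$ (continuous, Leibniz, normalized-preserving, $D^2=0$), form the quasi-action $\Delta$ by \eqref{eqn:difftorep}, and then form the operator $D'$ associated to $\Delta$ by the construction of \S\ref{sec:cohomologyrep}. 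On $0$-forms, $(D'\varepsilon)(g) = \Delta_g \varepsilon_{s(g)} - \varepsilon_{t(g)} = (D\varepsilon)(g)$ directly from \eqref{eqn:difftorep}, so $D$ and $D'$ agree on $C^0(G;E) = \Gamma(E)$. Since both $D$ and $D'$ satisfy the same Leibniz identity \eqref{eqn:leibniz} with respect to the same $\delta$, their difference $D - D'$ is a $C(G)$-module map (this is the same observation used in the proof of the Lemma), and it vanishes on $\Gamma(E)$; by Proposition \ref{prop:0forms}, $\Gamma(E)$ generates $C(G;E)$ as a right $C(G)$-module, so $D = D'$ on all of $C(G;E)$.

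The one genuinely substantive point, rather than the mechanical bijection bookkeeping, is the verification that the $D$ attached to a representation really is continuous with respect to the Fr\'{e}chet topology and really does satisfy \eqref{eqn:leibniz} and preserve normalized cochains — but all three of these were already asserted in \S\ref{sec:cohomologyrep}, so in the proof I would simply cite them. Similarly, the content of the Lemma just above the theorem handles the flat/unital translation in the reverse direction. Thus the proof reduces to the two short computations in the previous paragraph showing round-trip identity, together with invoking Proposition \ref{prop:0forms} to promote an equality on $0$-forms to an equality of operators. I expect no real obstacle; the only place to be slightly careful is ensuring that the extension of $D$ from $\Gamma(E)$ to all cochains is forced — i.e. that \eqref{eqn:leibniz} together with the values on $\Gamma(E)$ determines $D$ uniquely — which is exactly why Proposition \ref{prop:0forms} (every cochain is a $C(G)$-combination of $0$-forms) is the load-bearing input on both sides of the correspondence.
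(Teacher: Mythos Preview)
Your proposal is correct and follows exactly the approach the paper intends: the theorem is stated without proof, as a summary that ``ties together'' the constructions of \S\ref{sec:cohomologyrep} (representation $\Rightarrow$ operator) and the immediately preceding subsection (operator $\Rightarrow$ quasi-action, with the Lemma handling flat/unital), and your write-up supplies precisely the round-trip bookkeeping via Proposition \ref{prop:0forms} that the paper leaves implicit.
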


\subsection{Representations up to homotopy}

Let $\Etot = \bigoplus E_i$ be a graded vector bundle over $M$.  We consider $C(G; \Etot)$ to be a graded right $C(G)$-module with respect to the total grading:
\[ C(G; \Etot)^p = \bigoplus_{q-r=p} C^q(G; E_r).\]

From the point of view of Theorem \ref{thm:repop}, the following is a natural extension of the notion of representation to the graded setting.

\begin{dfn}[\cite{aba-cra:rephomgpd}]\label{dfn:superrep}
	A \emph{representation up to homotopy} of $G$ on a graded vector bundle $\Etot$ is a continuous degree $1$ operator $\totaldiff$ on $C(G;\Etot)$ satisfying \eqref{eqn:leibniz}, preserving the space of normalized cochains, and such that $\totaldiff^2 = 0$.
\end{dfn}
We stress that this definition of representation up to homotopy agrees with that of \emph{unital} representation up to homotopy in \cite{aba-cra:rephomgpd}.

\subsection{Transformation cochains}

Let $G \arrows M$ be a Lie groupoid, and let $E$ and $C$ be vector bundles over $M$. We define the space of \emph{transformation $p$-cochains} from $E$ to $C$ as $C^p(G;E \to C) \defequal \Gamma\left(\Hom \left((\pi_p^p)^* E, (\pi_0^p)^* C \right)\right)$, where $\pi^p_p: G^{(p)} \to M$ is the identity for $p=0$ and is given by $\pi^p_p(g_1, \dots, g_p) = s(g_p)$ for $p>0$. More concretely, if $\omega \in C^p(G; E \to C)$ and $(g_1, \dots, g_p) \in G^{(p)}$, then $\omega_{(g_1, \dots, g_p)}$ is a linear map from $E_{s(g_p)}$ to $C_{t(g_1)}$.

We note that $C^0(G; E \to C) = \Hom(E,C)$; however, for $p > 0$, $C^p(G; E \to C)$ is different from the space of $p$-cochains with values in $\Hom(E,C)$, except when $M$ is a point.

Let $\omega \in C^p(G; E \to C)$, and let $\varepsilon \in \Gamma(E) = C^0(G;E)$. Define a $p$-cochain $\hat{\omega}(\varepsilon) \in C^p(G;C)$ by
\begin{equation}\label{eq:transhat}
	\hat{\omega}(\varepsilon)(g_1, \dots, g_p) = \omega_{(g_1, \dots, g_p)}(\varepsilon_{s(g_p)})  \in C_{t(g_1)}.
\end{equation}
The map $\varepsilon \mapsto \hat{\omega}(\varepsilon)$ can be extended to a $C(G)$-module morphism $\hat{\omega}: C^\bullet(G;E) \to C^{\bullet + p}(G;C)$. We leave the following proposition as an exercise (see \cite[Lemma 3.10]{aba-cra:rephomgpd}).

\begin{prop}\label{prop:transformation}
	The map $\omega \mapsto \hat{\omega}$ is an isomorphism from $C^p(G; E \to C)$ to the space of $C(G)$-module morphisms from $C^\bullet (G;E)$ to $C^{\bullet+p}(G;C)$.
\end{prop}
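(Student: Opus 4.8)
The plan is to exhibit an explicit inverse to the map $\omega \mapsto \hat{\omega}$ and to check the two composites are identities, using Proposition \ref{prop:0forms} to control $C(G)$-module morphisms by their restriction to $0$-cochains. The key structural input is that, by Proposition \ref{prop:0forms}, $C^\bullet(G;E)$ is generated as a right $C(G)$-module by $C^0(G;E) = \Gamma(E)$, so a $C(G)$-module morphism $\Phi: C^\bullet(G;E) \to C^{\bullet+p}(G;C)$ is determined by its restriction to $\Gamma(E)$; and Lemma \ref{lemma:transformation} guarantees conversely that any $\omega \in C^p(G;E\to C)$ produces a well-defined morphism $\hat\omega$, since the assignment $\varepsilon \mapsto \hat\omega(\varepsilon)$ is $C^\infty(M)$-linear and hence descends through the tensor-product relation $\varepsilon\phi \otimes f = \varepsilon \otimes (\phi\star f)$ defining $\Gamma(E)\otimes_{C^\infty(M)} C(G) \cong C(G;E)$.

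First I would define the candidate inverse. Given a $C(G)$-module morphism $\Phi: C^\bullet(G;E) \to C^{\bullet+p}(G;C)$, define $\omega^\Phi \in C^p(G;E\to C)$ by the formula $\omega^\Phi_{(g_1,\dots,g_p)}(e) \defequal \Phi(\varepsilon)(g_1,\dots,g_p)$, where $\varepsilon \in \Gamma(E)$ is any section with $\varepsilon_{s(g_p)} = e$; here $\Phi(\varepsilon) \in C^p(G;C)$, so evaluating at $(g_1,\dots,g_p)$ lands in $C_{t(g_1)}$, as required. I would check this is well-defined (independent of the choice of extending section $\varepsilon$): if $\varepsilon_{s(g_p)} = 0$, pick $\phi \in C^\infty(M)$ with $\phi(s(g_p)) = 0$ and $\varepsilon' = \varepsilon$ near $s(g_p)$, write $\varepsilon = \phi\varepsilon' + (\varepsilon - \phi\varepsilon')$ with the second summand vanishing in a neighborhood of $s(g_p)$; then $C(G)$-linearity gives $\Phi(\phi\varepsilon')(g_1,\dots,g_p) = \phi(s(g_p))\cdot(\cdots) = 0$ and the locality of the evaluation handles the other term. (This uses the standard fact that $C(G)$-module morphisms are local, which follows from $C^\infty(M)$-linearity via bump functions.) Linearity of $e \mapsto \omega^\Phi_{(g_1,\dots,g_p)}(e)$ and smoothness in $(g_1,\dots,g_p)$ follow from the corresponding properties of $\Phi(\varepsilon)$ and a partition-of-unity argument for the dependence on a choice of local frame of $E$.

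Next I would verify the two composites are the identity. Starting from $\omega \in C^p(G;E\to C)$: by \eqref{eq:transhat}, $\hat\omega(\varepsilon)(g_1,\dots,g_p) = \omega_{(g_1,\dots,g_p)}(\varepsilon_{s(g_p)})$, so unwinding the definition of $\omega^{\hat\omega}$ gives $\omega^{\hat\omega}_{(g_1,\dots,g_p)}(e) = \hat\omega(\varepsilon)(g_1,\dots,g_p) = \omega_{(g_1,\dots,g_p)}(e)$ for any extension $\varepsilon$ of $e$, i.e.\ $\omega^{\hat\omega} = \omega$. Starting from a $C(G)$-module morphism $\Phi$: the morphism $\widehat{\omega^\Phi}$ agrees with $\Phi$ on $\Gamma(E)$ by construction, hence on all of $C^\bullet(G;E)$ by Proposition \ref{prop:0forms} since both are $C(G)$-module morphisms agreeing on a generating set; thus $\widehat{\omega^\Phi} = \Phi$. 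Finally I would note that $\omega \mapsto \hat\omega$ is evidently additive (and $C^\infty(M)$-linear in the appropriate sense), completing the proof that it is an isomorphism of (the relevant modules of) cochains.

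The main obstacle is the well-definedness of $\omega^\Phi$ — that is, establishing that a $C(G)$-module morphism automatically acts locally on $C^0(G;E) = \Gamma(E)$, so that the pointwise formula $\omega^\Phi_{(g_1,\dots,g_p)}(e) = \Phi(\varepsilon)(g_1,\dots,g_p)$ depends only on the germ (indeed only on the value $e$) of the extending section $\varepsilon$ at $s(g_p)$. This is a routine sheaf-theoretic point, but it is the one place where the argument genuinely uses something beyond formal module manipulation, and it is worth stating carefully; everything else is bookkeeping with Proposition \ref{prop:0forms} and Lemma \ref{lemma:transformation}.
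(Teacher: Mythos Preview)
Your proposal is correct and follows essentially the same approach as the paper: both use Proposition \ref{prop:0forms} to reduce a $C(G)$-module morphism to its restriction on $\Gamma(E)$, and both invoke $C^\infty(M)$-linearity (the property established in Lemma \ref{lemma:transformation}) to conclude that $\Phi(\varepsilon)(g_1,\dots,g_p)$ depends only on $\varepsilon_{s(g_p)}$, thereby recovering $\omega$. Your version is more explicit---you exhibit the inverse $\Phi \mapsto \omega^\Phi$, sketch the bump-function locality argument, and verify both composites---whereas the paper compresses the same reasoning into two sentences.
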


Suppose that $G$ is equipped with quasi-actions $\Delta^E$ and $\Delta^C$ on $E$ and $C$, respectively. Then we may define an operator $D$ on $C(G;E \to C)$ by 
\begin{equation}\label{eq:homdiff}
\begin{split}
  	 (D\omega) (g_0, \dots, g_p) =& \Delta^C_{g_0} \circ \omega (g_1, \dots, g_p) + \sum_{i=1}^p (-1)^i \omega(g_0, \dots, g_{i-1} g_i, \dots, g_p) \\
&+ (-1)^{p+1} \omega(g_0, \dots, g_{p-1}) \circ \Delta^E_{g_p}.   
\end{split}
\end{equation}
Via the isomorphism of Proposition \ref{prop:transformation}, this operator is equivalently given by
\begin{equation*}
	\widehat{D \omega} = D^C \circ \hat{\omega} + (-1)^{p+1} \hat{\omega} \circ D^E,
\end{equation*}
where $D^C$ and $D^E$ are the operators on $C(G;C)$ and $C(G;E)$, respectively, corresponding to the two quasi-actions.

In the case where $\Delta^E$ and $\Delta^C$ are both representations, then the operator $D$ in \eqref{eq:homdiff} satisfies $D^2 = 0$, and one can then define the cohomology $H^\bullet(G; E \to C)$.

\subsection{Representations up to homotopy: $2$-term case}\label{sec:2term}

In this paper, we will be concerned primarily with representations up to homotopy on graded vector bundles that are concentrated in degrees $0$ and $1$. In this case, we use the notation $\Etot = E \oplus C[1]$, where $E$ is the degree $0$ part and $C$ is the degree $1$ part. Then
\[ C(G;E \oplus C[1])^p = C^{p}(G;E) \oplus C^{p+1}(G;C).\] 
Any degree $1$ operator $\totaldiff$ on $C(G;E \oplus C[1])$ decomposes as the sum of the following four homogeneous components:
\begin{align*}
	\hat{\boundary} &: C^\bullet(G;C) \to C^\bullet(G;E), \\
	D^C &: C^\bullet(G; C) \to C^{\bullet+1}(G; C), \\
	D^E &: C^\bullet(G; E) \to C^{\bullet+1}(G; E), \\
	\hat{\Omega} &: C^\bullet(G;E) \to C^{\bullet+2}(G;C).
\end{align*}
The Leibniz rule \eqref{eqn:leibniz} for $\totaldiff$ is equivalent to the requirements that
\begin{enumerate}
	\item $D^C$ and $D^E$ satisfy \eqref{eqn:leibniz}, and
\item $\hat{\boundary}$ and $\hat{\Omega}$ are right $C(G)$-module morphisms.
\end{enumerate}
Requirement (1) implies that there are quasi-actions $\Delta^C$ and $\Delta^E$ on $C$ and $E$, respectively, given by the following graded versions of \eqref{eqn:difftorep}:
\begin{align}
 \Delta^C_g \alpha &= -(D^C \alpha) (g) + \alpha_{t(g)}	\label{eqn:difftorepodd} \\
 \Delta^E_g \varepsilon &= (D^E \varepsilon) (g) + \varepsilon_{t(g)} \label{eqn:difftorepeven}
\end{align}
for $\alpha \in \Gamma(C)$ and $\varepsilon \in \Gamma(E)$. The reason for the sign difference between \eqref{eqn:difftorepodd} and \eqref{eqn:difftorepeven} is that the Leibniz rule now incorporates the vector bundle grading. 

Requirement (2) above implies (via Proposition \ref{prop:transformation}) that $\hat{\boundary}$ corresponds to a linear map $\boundary \in \Hom(C,E) = C^0(G;C \to E)$, and that $\hat{\Omega}$ corresponds to a transformation $2$-cochain $\Omega \in C^2(G;E \to C)$.

Next, we will express equation $\totaldiff^2 = 0$ and the property of preserving normalized cochains in terms of $\Delta^C$, $\Delta^E$, $\boundary$, and $\Omega$.

The equation $\totaldiff^2 = 0$ decomposes into the following equations:
\begin{align*}
	D^E \hat{\boundary} + \hat{\boundary} D^C &= 0, \\
(D^C)^2 + \hat{\Omega} \hat{\boundary} &= 0,\\
(D^E)^2 + \hat{\boundary} \hat{\Omega} &= 0,\\
D^C \hat{\Omega} + \hat{\Omega} D^E &= 0.
\end{align*}
These equations respectively translate into the following equations:\begin{align}
\label{eq:4eq1}
	\Delta^E_{g_1} \boundary - \boundary \Delta^C_{g_1} &= 0, \\
\label{eq:4eq2}  \Delta^C_{g_1} \Delta^C_{g_2} - \Delta^C_{g_1 g_2} + \Omega_{g_1,g_2} \boundary &= 0, \\
\label{eq:4eq3}  \Delta^E_{g_1} \Delta^E_{g_2} - \Delta^E_{g_1 g_2} +  \boundary \Omega_{g_1,g_2} &= 0, \\
\label{eq:4eq4}  \Delta^C_{g_1} \Omega_{g_2,g_3} - \Omega_{g_1 g_2, g_3} + \Omega_{g_1, g_2 g_3} - \Omega_{g_1,g_2}\Delta^E_{g_3} &= 0
\end{align}
for $(g_1,g_2,g_3) \in G^{(3)}$. Equation \eqref{eq:4eq1} says that $\Delta_{g_1} = (\Delta^C_{g_1},\Delta^E_{g_1})$ is a chain map on the $2$-term complex $C \tolabel{\boundary} E$. Equations \eqref{eq:4eq2}--\eqref{eq:4eq3} say that $\Omega_{g_1,g_2}$ provides a chain homotopy from $\Delta_{g_1}\Delta_{g_2}$ to $\Delta_{g_1 g_2}$. Equation \eqref{eq:4eq4} is a Bianchi-type identity, saying that $D\Omega = 0$, where $D$ is defined as in \eqref{eq:homdiff}. In particular, in the case where $\Delta^C$ and $\Delta^E$ are genuine representations, then \eqref{eq:4eq4} can be interpreted as a cocycle condition.

The total operator $\totaldiff$ preserves normalized cochains if and only if all of the four components do. For $\boundary$, the property is automatic. For the remaining three components, we obtain the following conditions:
\begin{align}
     &\Delta^C \mbox{ and } \Delta^E \mbox{ are unital,} \label{eq:unit1}\\
&\Omega \mbox{ is normalized.} \label{eq:unit2}
\end{align}

The following theorem summarizes the results from this section:
\begin{thm}\label{thm:2term}
     There is a one-to-one correspondence between representations up to homotopy of a Lie groupoid $G \arrows M$ on a $2$-term graded vector bundle $E \oplus C[1] \to M$ and $4$-tuples $(\boundary, \Delta^C, \Delta^E, \Omega)$, where
\begin{itemize}
     \item $\boundary: C \to E$ is a linear map,
     \item $\Delta^C$ and $\Delta^E$ are unital quasi-actions of $G$ on $C$ and $E$, respectively, and
     \item $\Omega$ is a normalized element of $C^2(G;E \to C)$,
\end{itemize}
satisfying \eqref{eq:4eq1}--\eqref{eq:4eq4}.
\end{thm}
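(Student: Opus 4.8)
The plan is to build the correspondence out of the decomposition of $\totaldiff$ into its four homogeneous components, translating each component and each of the three defining conditions of Definition \ref{dfn:superrep} by means of the results already in this section. The assignment $\totaldiff \leftrightarrow (D^C, D^E, \hat{\boundary}, \hat{\Omega})$ is itself a bijection between degree $1$ operators on $C(G; E \oplus C[1])$ and quadruples of operators of the indicated bidegrees (it is just the decomposition of $\Hom(C(G;\Etot), C(G;\Etot))$ into bihomogeneous pieces, each operator having a unique such decomposition and each quadruple reassembling uniquely). So the whole proof reduces to reading off what the Leibniz rule \eqref{eqn:leibniz}, the equation $\totaldiff^2 = 0$, and preservation of normalized cochains say about this quadruple. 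First, as recorded just above the theorem, \eqref{eqn:leibniz} for $\totaldiff$ is equivalent to the conjunction: $D^C$ and $D^E$ each satisfy \eqref{eqn:leibniz}, and $\hat{\boundary}$, $\hat{\Omega}$ are right $C(G)$-module morphisms. Now I invoke the constructions of \S\ref{sec:repres}: by Proposition \ref{prop:0forms} together with formula \eqref{eqn:difftorep}, a degree $1$ operator on $C(G;E)$ satisfying \eqref{eqn:leibniz} is precisely the datum of a quasi-action on $E$ (the operator is determined by its restriction to $\Gamma(E)$, that restriction is the quasi-action via \eqref{eqn:difftorep}, and conversely a quasi-action reconstructs the operator). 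Applying this to $D^E$ via \eqref{eqn:difftorepeven} and to $D^C$ via \eqref{eqn:difftorepodd} produces quasi-actions $\Delta^E$, $\Delta^C$, and applying Proposition \ref{prop:transformation} to $\hat{\boundary}$ (a degree $0$ module morphism $C^\bullet(G;C)\to C^\bullet(G;E)$) and to $\hat{\Omega}$ (a degree $2$ module morphism $C^\bullet(G;E)\to C^{\bullet+2}(G;C)$) produces $\boundary \in \Hom(C,E) = C^0(G;C\to E)$ and $\Omega \in C^2(G;E\to C)$. At this stage I have a bijection between degree $1$ operators satisfying \eqref{eqn:leibniz} and quadruples $(\Delta^C,\Delta^E,\boundary,\Omega)$ of the stated types, with no further conditions imposed.

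It then remains to cut out, on the two sides, those objects that additionally satisfy $\totaldiff^2 = 0$ and preserve normalized cochains. Writing $\totaldiff^2 = 0$ out by bidegree gives exactly the four operator equations displayed above the theorem. For each of these, both sides are $C(G)$-module endomorphisms — composites of maps that are either module morphisms or satisfy \eqref{eqn:leibniz}, by the same argument that shows $D^2$ is a $C(G)$-module endomorphism — so by Proposition \ref{prop:0forms} each equation holds if and only if it holds after evaluation on $\Gamma(E)$ or $\Gamma(C)$; evaluating further on a composable string of arrows, the model computation being $(D^2\varepsilon)(g_1,g_2) = (\Delta_{g_1}\Delta_{g_2} - \Delta_{g_1 g_2})\varepsilon_{s(g_2)}$ carried out above, collapses each operator equation to its pointwise form, yielding \eqref{eq:4eq1}--\eqref{eq:4eq4}. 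Preservation of normalized cochains is handled the same way: it holds for $\totaldiff$ iff it holds componentwise, each component preserves normalized cochains iff it does so on $\Gamma(E)$ and $\Gamma(C)$ (again Proposition \ref{prop:0forms}), and evaluating with a unit inserted shows this is automatic for $\hat{\boundary}$ and is equivalent to \eqref{eq:unit1}--\eqref{eq:unit2} for the other three. Combining, the bijection of the previous paragraph restricts to the asserted one-to-one correspondence.

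The hard part here is not conceptual but notational: the one place that genuinely needs care is the sign bookkeeping in the translation of the four operator equations. One must keep track of the Koszul signs coming from the total grading on $C(G; E\oplus C[1])$ — this is exactly the origin of the minus sign in \eqref{eqn:difftorepodd} relative to \eqref{eqn:difftorepeven} — and verify that composites of module morphisms such as $\hat{\Omega}\hat{\boundary}$ and $\hat{\boundary}\hat{\Omega}$ correspond, under Propositions \ref{prop:transformation} and \ref{prop:0forms}, to the naive pointwise composites $\Omega_{g_1,g_2}\circ\boundary$ and $\boundary\circ\Omega_{g_1,g_2}$, so that the four operator equations reproduce \eqref{eq:4eq1}--\eqref{eq:4eq4} with precisely the stated signs and no stray factors. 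This is a finite, mechanical verification rather than a real obstacle, and completes the proof.
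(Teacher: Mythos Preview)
Your proof is correct and follows essentially the same approach as the paper: the theorem is stated there as a summary of the preceding discussion in \S\ref{sec:2term}, which decomposes $\totaldiff$ into its four bihomogeneous components, translates \eqref{eqn:leibniz} and $\totaldiff^2=0$ componentwise via Propositions \ref{prop:0forms} and \ref{prop:transformation}, and reads off the normalization conditions exactly as you do. Your write-up is slightly more explicit about the module-theoretic justifications, but the argument is the same.
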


\subsection{Gauge transformations} \label{sec:gauge}

Let $\Etot$ be a graded vector bundle over $M$. There is a natural quotient map $\mu: C(G;\Etot) \to \Gamma(\Etot)$ whose kernel is spanned by all $C^q(G;E_r)$ where $q > 0$.
\begin{dfn}\label{dfn:equivalence}
A \emph{gauge transformation} of $C(G;\Etot)$ is a degree-preserving $C(G)$-module automorphism $T$ of $C(G;\Etot)$, preserving the space of normalized cochains, such that $\mu \circ T = \mu$. Under a gauge transformation, a representation up to homotopy $\totaldiff$ transforms as $\totaldiff' = T \circ \totaldiff \circ T^{-1}$. In this case, we say that $\totaldiff$ and $\totaldiff'$ are \emph{gauge-equivalent}.
\end{dfn}
Gauge-equivalent representations up to homotopy are isomorphic in the sense of \cite{aba-cra:rephomgpd}, but the notion of gauge-equivalence is slightly more refined than that of isomorphism. Specifically, the condition involving $\mu$ in Definition \ref{dfn:equivalence} serves the purpose of restricting attention to isomorphisms that ``cover'' the identity map on $\Gamma(\Etot)$.

In the case where $\Etot = E$ is concentrated in degree $0$, a representation up to homotopy is the same thing as a representation, and there are no nontrivial gauge transformations.

Let $G \arrows M$ be a Lie groupoid, let $E \oplus C[1]$ be a $2$-term graded vector bundle, and consider a normalized transformation $1$-cochain $\sigma \in C^1(G;E \to C)$. The associated operator $\hat{\sigma}: C^\bullet(G;E) \to C^{\bullet+1}(G;C)$ may be viewed as a degree $0$ operator on $C(G;E \oplus C[1])$. Clearly, $\hat{\sigma}^2 = 0$, so the map $1 + \hat{\sigma}$ is invertible with inverse $1 - \hat{\sigma}$. One can easily see that $1 + \hat{\sigma}$ is a gauge transformation. The converse is also true:
\begin{prop}\label{prop:gauge2term}
	Every gauge transformation of $C(G; E \oplus C[1])$ is of the form $1+\hat{\sigma}$ for some normalized $\sigma \in C^1(G;E \to C)$.
\end{prop}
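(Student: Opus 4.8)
The plan is to start from an arbitrary gauge transformation $T$ of $C(G;E\oplus C[1])$ and extract from it a normalized transformation $1$-cochain $\sigma$, then verify that $T = 1 + \hat\sigma$. First I would analyze what $T$ can do on degree-homogeneous components. Since $\Etot = E \oplus C[1]$ is concentrated in degrees $0$ and $1$, we have $C(G;\Etot)^p = C^p(G;E) \oplus C^{p+1}(G;C)$, and $T$ is degree-preserving and $C(G)$-linear. By Proposition \ref{prop:0forms} (applied to $E$ and to $C$ separately), the module $C(G;\Etot)$ is generated as a right $C(G)$-module by $\Gamma(E) \oplus \Gamma(C)$, with $\Gamma(E)$ sitting in degree $0$ and $\Gamma(C)$ in degree $1$. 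So $T$ is completely determined by its restriction to $\Gamma(E) \oplus \Gamma(C)$, and it suffices to pin down $T$ there.

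Next I would use the two defining constraints on $T$: that $\mu \circ T = \mu$, and that $T$ is degree-preserving. On $\Gamma(C) = C^0(G;C) \subset C(G;\Etot)^1$: the degree-$1$ part of $C(G;\Etot)$ is $C^1(G;E) \oplus C^0(G;C)$, so a priori $T(\alpha) = \alpha' + \psi(\alpha)$ with $\alpha' \in C^0(G;C)$ and $\psi(\alpha) \in C^1(G;E)$; but the condition $\mu \circ T = \mu$ kills the $C^1$-part of the image and forces $\alpha' = \alpha$, and moreover (I claim) $\psi(\alpha) = 0$ — this needs the observation that there is no room in degree $1$ to land a normalized element of $C^1(G;E)$ consistent with $C(G)$-linearity and the normalization constraint, since applying $\mu$ after pushing $\alpha$ by an element of $C(G)$ and using $\mu\circ T = \mu$ pins everything down. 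Actually the cleanest route: $\mu\circ T = \mu$ says $T - 1$ lands in $\ker\mu = \bigoplus_{q>0} C^q(G;E_r)$; combined with degree-preservation, on $\Gamma(C)$ the operator $T-1$ would have to map into $(C(G;\Etot)^1) \cap \ker\mu = C^1(G;E)$, and on $\Gamma(E)$ into $(C(G;\Etot)^0)\cap\ker\mu = 0$. Hence $T$ fixes $\Gamma(E)$ pointwise, and on $\Gamma(C)$ we have $T(\alpha) = \alpha + \sigma'(\alpha)$ for some $\sigma'(\alpha) \in C^1(G;E)$.

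Then I would check that $\varepsilon \mapsto 0$ on $\Gamma(E)$ and $\alpha \mapsto \sigma'(\alpha)$ on $\Gamma(C)$ assemble, via Proposition \ref{prop:transformation}, into a transformation $1$-cochain. Strictly, $\sigma'$ is a map $\Gamma(C) \to C^1(G;E)$; I need it to be $C^\infty(M)$-linear (so that Proposition \ref{prop:transformation} applies and yields $\sigma \in C^1(G;C\to E)$). This follows because $T$ is $C(G)$-linear and $0$-cochains act on $0$-cochains by scalar multiplication (as in the proof of Lemma \ref{lemma:transformation}). The transformation cochain I get is from $C$ to $E$, i.e.\ $\sigma \in C^1(G; C\to E)$, and it is normalized because $T$ preserves normalized cochains and $\alpha \in \Gamma(C)$ is normalized (so $T(\alpha) = \alpha + \hat\sigma(\alpha)$ normalized forces $\hat\sigma(\alpha)$, hence $\sigma$, normalized). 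Wait — I should be careful about the direction: the statement writes $\sigma \in C^1(G;E\to C)$ and $\hat\sigma: C^\bullet(G;E)\to C^{\bullet+1}(G;C)$, so I need $T$ to be nontrivial on $\Gamma(E)$, not on $\Gamma(C)$. Re-examining: in degree $0$, $C(G;\Etot)^0 = C^0(G;E)\oplus C^1(G;C)$, and $\ker\mu \cap C(G;\Etot)^0 = C^1(G;C)$ — that is nonzero. So on $\Gamma(E)$, $T(\varepsilon) = \varepsilon + \sigma'(\varepsilon)$ with $\sigma'(\varepsilon) \in C^1(G;C)$; and on $\Gamma(C) \subset C(G;\Etot)^1$, $\ker\mu \cap C(G;\Etot)^1 = C^1(G;E) \oplus C^2(G;C)$ — also nonzero, so $T$ could in principle move $\Gamma(C)$ too. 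The resolution is that $T$ is an \emph{automorphism} with $\mu\circ T = \mu$: its inverse also satisfies $\mu\circ T^{-1} = \mu$, and a filtration/degree argument (the operator $T - 1$ raises the $C^q$-filtration degree, hence is locally nilpotent, and inverting it order by order) shows the $\Gamma(C)$-component of $T$ must be trivial. This is the step I expect to be the main obstacle: carefully organizing the induction on cochain degree to conclude that $T$ is \emph{exactly} $1+\hat\sigma$ and nothing more, i.e.\ that no higher-order corrections survive, using that the only nilpotent correction compatible with $C(G)$-linearity, degree-preservation, $\mu$-compatibility, and normalization is precisely the square-zero operator $\hat\sigma$ coming from $C^1(G;E\to C)$. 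Once $\sigma$ is identified, verifying $T = 1+\hat\sigma$ is immediate from $C(G)$-linearity: both sides agree on the generating set $\Gamma(E)\oplus\Gamma(C)$, hence everywhere.
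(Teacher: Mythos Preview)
Your overall strategy---reduce to generators $\Gamma(E)\oplus\Gamma(C)$ via $C(G)$-linearity, then use degree-preservation and $\mu\circ T=\mu$---is exactly the paper's. But you have a degree-bookkeeping error that sends you on an unnecessary detour.

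The total grading is $C(G;\Etot)^p = \bigoplus_{q-r=p} C^q(G;E_r)$, and here $C$ sits as $E_1$ (the ``degree $1$ part'' of the bundle). So $\Gamma(C)=C^0(G;C)$ has total degree $0-1=-1$, not $+1$. In degree $-1$ there is \emph{nothing else}: $C(G;\Etot)^{-1}=\Gamma(C)$, with no $C^{-1}(G;E)$ term. Hence degree-preservation alone forces $T(\Gamma(C))\subseteq\Gamma(C)$, and then $\mu\circ T=\mu$ gives $T|_{\Gamma(C)}=\id$ immediately. There is no room for $T$ to move $\Gamma(C)$ into $C^1(G;E)\oplus C^2(G;C)$, and your proposed filtration/nilpotence argument is not needed.

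Once this is corrected, your argument on $\Gamma(E)$ is fine and matches the paper: $T(\varepsilon)-\varepsilon$ lands in $\ker\mu\cap C(G;\Etot)^0=C^1(G;C)$, the resulting map $\Gamma(E)\to C^1(G;C)$ is $C^\infty(M)$-linear by $C(G)$-linearity of $T$, so Proposition~\ref{prop:transformation} produces $\sigma\in C^1(G;E\to C)$; normalization of $\sigma$ follows from $T$ preserving normalized cochains. Since $T-1$ agrees with $\hat\sigma$ on the generating set $\Gamma(E)\oplus\Gamma(C)$ and both are $C(G)$-linear, you get $T=1+\hat\sigma$ with no ``higher-order corrections'' to worry about.
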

\begin{proof}
Let $T: C(G;E \oplus C[1]) \to C(G;E \oplus C[1])$ be a gauge transformation. Since $T$ is a $C(G)$-module automorphism, it is completely determined by its action on the $0$-cochain spaces $\Gamma(E)$ and $\Gamma(C)$. Since $T$ is degree-preserving, it sends $\Gamma(C)$ to $\Gamma(C)$ and $\Gamma(E)$ to $\Gamma(E) \oplus C^1(G;C)$. The condition $\mu \circ T = \mu$ implies that the action of $T$ on $\Gamma(C)$ is the identity, and that $T(\varepsilon) - \varepsilon$ is in $\ker \mu$ (and therefore must be in $C^1(G;C)$) for $\varepsilon \in \Gamma(E)$. Hence, the map $T-1$ is a $C(G)$-module morphism taking $C^\bullet(G;E)$ to $C^{\bullet+1}(G;C)$ and can be identified via Proposition \ref{prop:transformation} with $\hat{\sigma}$ for some $\sigma \in C^1(G;E \to C)$. Since $T$ preserves the space of normalized cochains, so does $\hat{\sigma}$, implying that $\sigma$ is normalized.
\end{proof}

We will study how $2$-term representations up to homotopy transform under gauge transformations in \S\ref{sec:dependence}; in particular, see equations \eqref{eq:changecomponents}.


\section{$\VB$-groupoids}\label{sec:vbg}

In this section, we review the various equivalent definitions of $\VB$-groupoid, the construction of the core of a $\VB$-groupoid, and the notion of horizontal lift. Most of the material in \S\S\ref{sec:def}-\ref{sec:cores} can be found elsewhere (for example, \cite{mac:book}), but we hope that the reader will find our presentation valuable. In \S\ref{sec:formulas}, we present the formulas for the components of the representation up to homotopy arising from the choice of a horizontal lift.

\subsection{The definition of $\VB$-groupoid}\label{sec:def}
Consider a commutative diagram of Lie groupoids and vector bundles as follows:
\begin{equation} \label{diag:vbg}
\vbgs{\Gamma}{E}{G}{M}
\end{equation}
By this we mean that $\Gamma \arrows E$ is a Lie groupoid (with source, target, multiplication, identity, and inverse maps $\tilde s$, $\tilde t$, $\tilde m$, $\tilde 1$, and $\tilde \iota$),  $G \arrows M$ is a Lie groupoid (with source, target, multiplication, identity, and inverse maps $s$, $t$, $m$, $1$, and $\iota$), $\Gamma \to G$ is a vector bundle (with projection map and zero section $\tilde q$ and $\tilde 0$), $E \to M$ is a vector bundle (with projection map and zero section $q$ and $0$) and such that $q \tilde s = s \tilde q$ and $q \tilde t = t \tilde q$.  For the rest of this subsection we will always start with this data.

\begin{dfn} \label{dfn:vbg}
A \emph{$\VB$-groupoid} is a commutative diagram of Lie groupoids and vector bundles like \eqref{diag:vbg} such that the following conditions hold:
\begin{enumerate}
\item  $(\tilde s, s)$ is a morphism of vector bundles.
\item  $(\tilde t, t)$ is a morphism of vector bundles.
\item  $(\tilde q, q)$ is a morphism of Lie groupoids.
\item  \emph{Interchange law:} 
\begin{equation*}
(\gamma_1 + \gamma_3) (\gamma_2 + \gamma_4) = \gamma_1 \gamma_2 + \gamma_3 \gamma_4
\end{equation*}
for any $\gamma_j \in \Gamma$ for which the equation makes sense; specifically,
for any $(\gamma_1, \gamma_2) \in \Gamma^{(2)}$, $(\gamma_3, \gamma_4) \in \Gamma^{(2)}$ such that $\tilde q (\gamma_1) = \tilde q (\gamma_3)$ and $\tilde q (\gamma_2) = \tilde q (\gamma_4)$.
\end{enumerate}
\end{dfn}

\begin{ex}
Let $G \arrows M$ be any Lie groupoid and take $\Gamma = TG$, $E = TM$.  Then \eqref{diag:vbg} is a $\VB$-groupoid where $\tilde s = Ts$, $\tilde t = Tt$, $\tilde m = Tm$, et cetera. This is the tangent prolongation $\VB$-groupoid which, as we will later see, plays the role of the ``adjoint representation'' of $G$.
\end{ex} 

On the surface, our definition of $\VB$-groupoid appears different from the usual ones (e.g.\ \cite{mac:dblie1, mac:book}).  In what follows, we will show that the various definitions are equivalent. More precisely, we will see that the conditions in Definition \ref{dfn:vbg} are equivalent to the requirement that $\Gamma \to G$ be a ``Lie-groupoid object in the category of vector bundles'' or, equivalently, that $\Gamma \arrows E$ be a ``vector-bundle object in the category of Lie groupoids''.

First of all, we remark that the usual definition includes the following technical condition.   Consider the manifold
$E \times_s G := s^* E = \{ (e,g) \in E \times G \; \vert \; q(e) = s(g) \}$
and the map
\begin{equation}  \label{eq:pR} 
\begin{split} 
p^R: \Gamma &\to \;  \; E \times_s G = s^* E\\
\gamma &\mapsto   (\tilde s (\gamma), \tilde q (\gamma))
\end{split}
\end{equation}
The technical condition is that $p^R$ is required to be a surjective submersion.  However, Li-Bland and \v{S}evera showed in \cite[Appendix A]{lib-sev} that this condition is unnecessary.  Specifically, if $\Gamma$ is a commutative diagram like $\eqref{diag:vbg}$ and Condition 1 in Definition \ref{dfn:vbg} is satisfied, then the map $p^R$ is automatically a surjective submersion.

\begin{dfn} \label{dfn:lgocvb}
A \emph{Lie-groupoid object in the category of vector bundles} is a commutative diagram of Lie groupoids and vector bundles like \eqref{diag:vbg} such that
\begin{enumerate}
\item  $(\tilde s, s)$ is a morphism of vector bundles.
\item  $(\tilde t, t)$ is a morphism of vector bundles.
\item $\Gamma^{(2)} \to G^{(2)}$ is a vector bundle with the natural structure.
\item  $(\tilde m, m)$ is a morphism of vector bundles.
\end{enumerate}
\end{dfn}
In Definition \ref{dfn:lgocvb}, Condition 3 is necessary to make sense of Condition 4. Each fiber of $\Gamma^{(2)} \to G^{(2)}$ has a natural vector space structure thanks to Conditions 1 and 2, so only local trivializability needs to be checked for Condition 3.
Note that the conditions of Definition \ref{dfn:lgocvb} imply that the identity $(\tilde 1, 1)$ and inverse $(\tilde \iota, \iota)$ maps are also morphisms of vector bundles.

\begin{dfn}  \label{dfn:vboclg}
A \emph{vector-bundle object in the category of Lie groupoids} is a commutative diagram of Lie groupoids and vector bundles like \eqref{diag:vbg} such that 
\begin{enumerate}
\item  $(\tilde q, q)$ is a morphism of Lie groupoids.
\item $\Gamma \bitimes{\tilde q}{\tilde q} \Gamma \arrows E \bitimes{q}{q} E $ is a Lie groupoid with the natural structure.
\item The addition maps $( +, +)$ are a morphism of Lie groupoids.
\end{enumerate}
\end{dfn}

In Definition \ref{dfn:vboclg}, Condition 2 is necessary to make sense of Condition 3. There is a natural groupoid structure on $\Gamma \bitimes{\tilde q}{\tilde q} \Gamma \arrows E \bitimes{q}{q} E $ thanks to Condition 1, and all the maps involved are smooth, so we only need to check that the source map is a submersion in order to satisfy Condition 2.
Note that the conditions of Definition \ref{dfn:vboclg} imply that the scalar multiplication maps and the zero sections are also morphisms of Lie groupoids.

Definitions \ref{dfn:vbg}, \ref{dfn:lgocvb}, and \ref{dfn:vboclg} are equivalent per the next proposition.

\begin{prop}  \label{prop:eqdefs}
Consider a commutative diagram of Lie groupoids and vector bundles like \eqref{diag:vbg}.  Then the following are equivalent:
\begin{itemize}
\item  $\Gamma$ is a $\VB$-groupoid,
\item $\Gamma$ is a ``Lie-groupoid object in the category of vector bundles'',
\item $\Gamma$ is a ``vector-bundle object in the category of Lie groupoids''. 
\end{itemize}
\end{prop}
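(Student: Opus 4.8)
The plan is to prove separately the two equivalences: that a $\VB$-groupoid is the same as a Lie-groupoid object in the category of vector bundles, and that a $\VB$-groupoid is the same as a vector-bundle object in the category of Lie groupoids. In each case the idea is to unwind the morphism conditions of Definition~\ref{dfn:lgocvb} (resp.\ Definition~\ref{dfn:vboclg}) until they become exactly the elementary conditions of Definition~\ref{dfn:vbg} together with one leftover ``regularity'' clause, and then to show that the leftover clause is automatic. First I would record as a dictionary the descriptions of morphisms recalled just above: a pair of maps covering a commuting base square is a vector-bundle morphism precisely when the top map is fiberwise additive, and is a Lie-groupoid morphism precisely when it intertwines source, target and multiplication (in each case the remaining structure -- scalar multiplication and zero sections, resp.\ units and inverses -- then comes along for free). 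Using this dictionary and the standing compatibilities $q\tilde s = s\tilde q$ and $q\tilde t = t\tilde q$, the condition ``$(\tilde m, m)$ is a vector-bundle morphism'' unpacks into ``$\tilde q$ respects multiplication'' (its source and target compatibilities being the standing ones) together with the interchange law, and ``$(+,+)$ is a Lie-groupoid morphism'' unpacks into ``$\tilde s$ additive'', ``$\tilde t$ additive'', together with the interchange law (which is precisely its multiplicativity).

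It then follows that Conditions 1, 2, 4 of Definition~\ref{dfn:lgocvb} match Conditions 1, 2, 3, 4 of Definition~\ref{dfn:vbg} term for term, and likewise Conditions 1, 3 of Definition~\ref{dfn:vboclg} match them. This yields both ``$\Leftarrow$'' implications at once, and all of the ``$\Rightarrow$'' implications except for the two structural clauses that have no counterpart in Definition~\ref{dfn:vbg}: that $\Gamma^{(2)} \to G^{(2)}$ is locally trivial, and that the source map of $\Gamma \bitimes{\tilde q}{\tilde q} \Gamma \arrows E \bitimes{q}{q} E$ is a submersion. (The parenthetical in the dictionary above also shows the auxiliary conclusions ``$\tilde 1$ and $\tilde\iota$ are vector-bundle morphisms'' and ``the zero sections and scalar multiplications are Lie-groupoid morphisms'' are free, so passing among the three pictures loses nothing.)

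So the real content is to derive those two clauses from Definition~\ref{dfn:vbg} -- indeed from its Condition 1 alone. The key fact is that $\tilde s_g : \Gamma_g \to E_{s(g)}$ is surjective for every $g \in G$, equivalently that $p^R$ is a surjective submersion; this is precisely the Li-Bland--Severa statement recalled above. Granting it: for local triviality, trivialize $\Gamma \to G$ near composable points $g_1^0, g_2^0$ and $E$ near $s(g_1^0) = t(g_2^0)$, so that the fiber of $\Gamma^{(2)}$ over $(g_1, g_2)$ becomes the kernel of the smooth family of linear maps $(v_1, v_2) \mapsto \tilde s_{g_1}(v_1) - \tilde t_{g_2}(v_2)$; since $\tilde s_{g_1}$ is already surjective this family has constant rank, so its kernel is a subbundle, hence locally trivial. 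For the submersion, the same surjectivity makes $\Gamma \bitimes{\tilde q}{\tilde q} \Gamma \to s^*(E \bitimes{q}{q} E)$ a fiberwise surjective morphism of vector bundles over $G$, hence a submersion, and composing with the pullback projection $s^*(E \bitimes{q}{q} E) \to E \bitimes{q}{q} E$ (a submersion because $s$ is) shows the doubled source map is a submersion.

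The main obstacle is exactly this last step: matching the conditions is bookkeeping, but local triviality of $\Gamma^{(2)}$ and submersivity of the doubled source map are genuinely geometric statements that do not follow formally and really require the fiberwise surjectivity of $\tilde s$. Since this is the theorem of Li-Bland and Severa, already imported into the discussion, the difficulty is isolated and is dispatched by the constant-rank argument above, after which one assembles the pieces: each of Definitions~\ref{dfn:lgocvb} and~\ref{dfn:vboclg} is equivalent to ``the conditions of Definition~\ref{dfn:vbg} hold, plus a clause that those conditions already force'', hence equivalent to Definition~\ref{dfn:vbg}.
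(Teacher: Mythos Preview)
Your proposal is correct and matches the paper's proof on the bookkeeping half: the paper likewise observes that Conditions~1,~2,~4 of Definition~\ref{dfn:lgocvb} are equivalent to Conditions~1--4 of Definition~\ref{dfn:vbg}, and Conditions~1,~3 of Definition~\ref{dfn:vboclg} are equivalent to Conditions~1--4 of Definition~\ref{dfn:vbg}, leaving only the two ``regularity'' clauses (local triviality of $\Gamma^{(2)}\to G^{(2)}$ and submersivity of the doubled source) to be derived from the surjective-submersion property of $p^R$.

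The genuine difference is in how those two clauses are established (the paper's Lemma~\ref{lem:2TC}). You argue directly: for local triviality you use that $(v_1,v_2)\mapsto \tilde s_{g_1}(v_1)-\tilde t_{g_2}(v_2)$ is a constant-rank family (by surjectivity of $\tilde s_{g_1}$), and for submersivity you factor the doubled source through the fiberwise surjection $\Gamma\oplus_G\Gamma \to s^*E \oplus_G s^*E$. The paper instead invokes the theory of right- and left-decompositions developed in \S\ref{sec:cores}: a choice of horizontal lift gives an explicit splitting $\Gamma^{(2)} \iso (\pi^2_0)^*C^R \oplus (\pi^2_1)^*E \oplus (\pi^2_2)^*C^L$, which exhibits $\Gamma^{(2)}$ as a vector bundle, and for the submersion it writes the doubled source as the composite $(p^R\times\id)$ followed by an isomorphism and a fibered product with $\tilde s$. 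Your route is more self-contained and avoids the forward reference to decompositions; the paper's route has the virtue of producing an explicit trivialization of $\Gamma^{(2)}$ that is reused later.
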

\begin{proof}
In Lemma \ref{lem:2TC} below we will show that Condition 3 in Definition \ref{dfn:lgocvb} and Condition 2 in Definition \ref{dfn:vboclg} follow from the property of $p^R$ being a submersion, rendering them completely unnecessary.

Next, we notice that Conditions 3 and 4 in Definition \ref{dfn:vbg} are equivalent to Condition 4 in Definition \ref{dfn:lgocvb}, and that Conditions 1, 2, and 4 in Definition \ref{dfn:vbg} are equivalent to Condition 3 in Definition \ref{dfn:vboclg}.  This is shown by writing down each condition as a commutative diagram.  This completes the proof.
\end{proof}

Finally, we note the following identities which are satisfied by $\VB$-groupoids:
\begin{align*}
\tilde{0}_{g_1} \tilde{0}_{g_2} &= \tilde{0}_{g_1 g_2}, & \tilde{0}_{1_x} &= \tilde{1}_{0_x},
\end{align*}
for all $(g_1,g_2) \in G^{(2)}$ and all $x \in M$. We will use these identities without reference throughout the remainder of the paper.

\subsection{Cores and decompositions of a $\VB$-groupoid}  \label{sec:cores}

\subsubsection{The right-core}\label{sec:rightcore}

Let $\Gamma$ be a $\VB$-groupoid as in \eqref{diag:vbg}.   Notice that $s^*E = E \times_s G \to G$ is a vector bundle, and that $p^R$, as defined in \eqref{eq:pR}, is a surjective morphism of vector bundles covering the identity map on $G$.

\begin{dfn}\label{dfn:rightvert}
The kernel of $p^R$, which we will denote $V^R \to G$, is the \emph{right-vertical subbundle} of $\Gamma \to G$.  Equivalently, for every $g \in G$, the fiber $V^R_g$ is the kernel of $\tilde{s}_g : \Gamma_g \to E_{s(g)}$.  Elements in $V^R$ are called \emph{right-vertical} elements of $\Gamma$.
\end{dfn}

\begin{dfn}
The \emph{right-core} of the $\VB$-groupoid \eqref{diag:vbg} is $C^R : = 1^*(V^R)$. In other words, $C^R$ is the restriction of $V^R$ to the units of $G$.
\end{dfn}

For any $(g_1,g_2) \in G^{(2)}$, right-multiplication by $\tilde{0}_{g_2}$ produces a linear isomorphism from $V^R_{g_1}$ to $V^R_{g_1 g_2}$.  In particular, for any $g \in G$, right-multiplication by $\tilde{0}_g$ produces a linear isomorphism from $C^R_{t(g)}$ to $V^R_g$.  Hence, we have a natural isomorphism of vector bundles over $G$ between $V^R$ and $t^* (C^R) = C^R \times_t G $ given by:
\begin{equation}  \label{eq:jR}  
\begin{split} 
j^R: \; t^*( C^R) = C^R \times_t G & \longrightarrow  V^R \subseteq \Gamma \\
(c, g)  \quad  & \mapsto \quad c \cdot \tilde{0}_g
\end{split}
\end{equation}

Now consider the following short exact sequence of vector bundles:
\begin{equation}  \label{eq:ses}
\xymatrix{
t^* (C^R) \ar[r]^{j^R} \ar[d] & \Gamma \ar[r]^{p^R} \ar[d] & s^* E  \ar[d] \\
G \ar[r]^{\id} & G \ar[r]^{\id}  & G
},
\end{equation}
where $p^R$ and $j^R$ are defined by Equations \eqref{eq:pR} and \eqref{eq:jR} respectively.  

We recall that a section of \eqref{eq:ses} is a morphism of vector bundles $h: s^*E \to \Gamma$ such that $p^R \circ h = \id$.  Such a section induces a splitting, that is, an isomorphism of vector bundles $s^* E \oplus t^* (C^R) \iso \Gamma $, given by
\begin{equation}  \label{eq:VH}
\begin{split}
C^R_{t(g)}  \oplus E_{s(g)} & \toiso \Gamma_g,  \\
 (c, e) &\mapsto c \cdot \tilde 0_g + h_g(e),
\end{split}
\end{equation}
where $h_g(e) \defequal h(e,g)$.
Given a choice of a section, the image of $h_g$ is a complement to $V^R_g \subseteq \Gamma_g$.  We will refer to vectors in the image of $h_g$ as \emph{right-horizontal}. 

Of course, the right-horizontal subspaces are noncanonical; however, when $g$ is a unit, there is a natural splitting
\begin{equation}\label{eqn:nattyice}
\Gamma_{1_x}  = C^R_x \oplus \tilde 1 (E_x) \textrm{ for all } x \in M.
\end{equation}
We will restrict our attention to splittings whose restriction to the units coincides with \eqref{eqn:nattyice}.
\begin{dfn}
A \emph{right-horizontal lift} of the $\VB$-groupoid \eqref{diag:vbg} is a section $h: s^*E \to \Gamma$ of \eqref{eq:ses}
such that 
\begin{equation}  \label{eq:unitalhorlift}
h(e, 1_x) = \tilde{1}_e \textrm{ for all } x \in M \textrm{ and } e \in E_x.  
\end{equation}
A \emph{right-decomposition} of the $\VB$-groupoid \eqref{diag:vbg} is a splitting of \eqref{eq:ses}
which coincides with the splitting \eqref{eqn:nattyice} at the units of $G$.
\end{dfn}
Clearly, right-horizontal lifts and right-decompositions are equivalent to each other. Since a right-decomposition is a splitting of a short exact sequence of vector bundles which agrees with a given splitting on an embedded submanifold, the existence of right-decompositions (and hence right-horizontal lifts) follows from a standard partition-of-unity argument.

\begin{ex}
 For the tangent prolongation $\VB$-groupoid $TG$, the right-core consists of vectors at units of $G$ that are tangent to the $s$-fibers. In other words, the right-core is the Lie algebroid $A$ of $G$.  The short exact sequence \eqref{eq:ses} is then
\begin{equation}  \label{eq:sesTG}
 t^*(A) \to TG \to s^*(TM),
\end{equation}
where the first map is given by right-translation and the second map is push-forward by $s$. A right-horizontal lift of $TG$ is the same thing as an Ehresmann connection on $G$, in the sense of \cite{aba-cra:rephomgpd}.
\end{ex}

\subsubsection{The left-core}

By exchanging the roles of source and target, one can similarly define a \emph{left-core}. All the concepts in \S\ref{sec:rightcore} have analogues in this setting. In particular, the analogue of \eqref{eq:ses} is the short exact sequence
\begin{equation} \label{eq:ses2}
\xymatrix{
s^* (C^L) \ar[r]^{j^L} & \Gamma \ar[r]^{p^L}  & t^* E 
},
\end{equation}
\begin{ex}
 For the tangent prolongation $\VB$-groupoid $TG$, the left-core consists of vectors at units of $G$ that are tangent to the $t$-fibers, so the left-core can also be identified with the Lie algebroid of $G$. The left-core and right-core for $TG$ correspond to the two models for the Lie algebroid of $G$, defined via left- and right-invariant vector fields, respectively.
\end{ex}

\subsubsection{Two cores, un c{\oe}ur}

There is a canonical isomorphism between the left- and right-cores of a $\VB$-groupoid. This fact allows us to see $C^L$ and $C^R$ as simply being two different models for a single natural vector bundle $C$. 
\begin{prop}
The involution
\begin{equation*}
F: \; \gamma \in \Gamma \; \mapsto \; -\gamma^{-1} \in \Gamma
\end{equation*}
exchanges the right-core $C^R$ and the left-core $C^L$. 
\end{prop}
We will abuse notation and use $F$ to refer to the restriction of $F$ to either $C^L$ or $C^R$.
Explicitly, the restrictions are given by 
\begin{equation}\label{eq:rightleft}
F(c) =  -c^{-1} = c - \tilde 1_{\tilde t (c)} 
\end{equation}
for $c \in C^R$ and $F(c) = c - \tilde 1_{\tilde s (c)}$ for $c \in C^L$. The map $F$ can be used to transform expressions involving one core into those involving the other.

\begin{ex}
Consider the tangent prolongation groupoid $TG$ in the case where $M$ is a point (so that $G$ is a Lie group). In this case, the left-core and the right-core are both equal to the tangent space at the identity of $G$, and the isomorphism $F$ is the identity map.
\end{ex}

There is a natural correspondence between left- and right-horizontal lifts.  Explicitly, let $h: E \times_t G \to \Gamma$ be a section of \eqref{eq:ses}.  We can associate a section $h' : E \times_s G \to \Gamma$ of \eqref{eq:ses2} by
\[h' (e,g) = \left( h(e, g^{-1}) \right)^{-1}\]

In the remainder of the paper we will, whenever possible, remain model-agnostic and simply refer to the core $C$.  However, when writing specific formulas it will frequently be necessary to choose a model. Unless otherwise specified, we will in these situations take $C$ to mean $C^R$.

\subsubsection{Proof of technical conditions}

We shall now prove the following lemma, which completes the proof of Proposition \ref{prop:eqdefs}. 

\begin{lemma} \label{lem:2TC}
Every $\VB$-groupoid satisfies Condition 3 in Definition \ref{dfn:lgocvb} and Condition 2 in Definition \ref{dfn:vboclg}.
\end{lemma}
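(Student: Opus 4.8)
The statement to prove is Lemma \ref{lem:2TC}: every $\VB$-groupoid satisfies Condition 3 in Definition \ref{dfn:lgocvb} (that $\Gamma^{(2)} \to G^{(2)}$ is a vector bundle with the natural structure) and Condition 2 in Definition \ref{dfn:vboclg} (that $\Gamma \bitimes{\tilde q}{\tilde q} \Gamma \arrows E \bitimes{q}{q} E$ is a Lie groupoid). As observed in the remarks following each definition, in both cases the only thing that is not automatic is a smoothness/topological statement: for Condition 3 one needs local trivializability of $\Gamma^{(2)} \to G^{(2)}$ (the fiberwise linear structure already exists by Conditions 1 and 2 of Definition \ref{dfn:vbg}), and for Condition 2 one needs the source (equivalently target) map of the candidate groupoid $\Gamma \bitimes{\tilde q}{\tilde q} \Gamma$ to be a surjective submersion. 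So the plan is to reduce both to the single known fact that $p^R$ (equivalently $p^L$) is a surjective submersion, which by the Li-Bland--Severa result follows from the $\VB$-groupoid axioms alone.

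**Key steps, in order.** First I would handle $\Gamma^{(2)} \to G^{(2)}$. Recall $\Gamma^{(2)} = \Gamma \bitimes{\tilde s}{\tilde t} \Gamma$ and $G^{(2)} = G \bitimes{s}{t} G$. The point is that $\Gamma^{(2)}$ fits into a fibered-product/pullback description: using the surjective submersion $p^R : \Gamma \to s^*E$ and the analogous $p^L : \Gamma \to t^*E$, one can exhibit $\Gamma^{(2)}$ as a fibered product over $G^{(2)}$ of bundles already known to be vector bundles — concretely, an element of $\Gamma^{(2)}$ over $(g_1,g_2)$ is a pair $(\gamma_1,\gamma_2)$ with $\tilde s(\gamma_1) = \tilde t(\gamma_2)$, and the matching of $\tilde s(\gamma_1)$ with $\tilde t(\gamma_2)$ is a pullback diagram along the submersions $p^R$ and $p^L$, whose base-identity-covering structure makes the total space a manifold and the projection to $G^{(2)}$ a (locally trivial) vector bundle. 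Equivalently, one can use a right-decomposition (which exists, noncanonically, by the partition-of-unity argument) to write $\Gamma \cong s^*E \oplus t^*C$ and then compute $\Gamma^{(2)}$ explicitly as $\{((e_2, c_1), (e_2', c_2)) : \dots\}$ over $G^{(2)}$, reading off a local trivialization; I would probably present the decomposition-based argument since it is the most concrete. Second, for Condition 2 of Definition \ref{dfn:vboclg}: the fibered product $\Gamma \bitimes{\tilde q}{\tilde q} \Gamma$ sits over $G$, and its source map to $E \bitimes{q}{q} E$ is, in each $G$-fiber, a pair of the maps $\tilde s$; since $p^R = (\tilde s, \tilde q)$ is a surjective submersion and $\tilde q$ is just the bundle projection, $\tilde s$ restricted to each fiber — indeed the whole bundle map $\Gamma \to s^*E$ — is a submersion, and taking fibered products of submersions over $G$ preserves the submersion property. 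So the source map of $\Gamma \bitimes{\tilde q}{\tilde q} \Gamma$ is a surjective submersion, which is exactly what is needed.

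**Main obstacle.** The conceptual content is small; the real work is bookkeeping — keeping straight the various fibered products ($\Gamma^{(2)}$ over $G^{(2)}$ versus $\Gamma \times_{\tilde q} \Gamma$ over $G$), checking that the relevant maps are transverse so the fibered products are genuinely smooth manifolds, and verifying that the vector-bundle charts one writes down are compatible. The step I expect to be the true crux is showing $\Gamma^{(2)} \to G^{(2)}$ is locally trivial: one must be careful that the fibered product $\Gamma \bitimes{\tilde s}{\tilde t} \Gamma$ is cut out transversally (which is where the submersivity of $p^R$, hence of $\tilde s$ in the relevant directions, is used) and then produce an honest local trivialization — the cleanest route being to invoke a local right-decomposition of $\Gamma$ and a compatible left-decomposition obtained via $F$, so that near a point of $G^{(2)}$ the space $\Gamma^{(2)}$ is identified with a product of fibers of $E$ and the core $C$. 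Everything else — the groupoid structure maps on $\Gamma \times_{\tilde q} \Gamma$, surjectivity of the source map — then follows formally from Conditions 1--4 of Definition \ref{dfn:vbg} together with the submersion property of $p^R$ already in hand.
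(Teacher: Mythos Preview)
Your proposal is correct and follows essentially the same approach as the paper: for Condition~3 the paper likewise chooses a right-decomposition together with the corresponding left-decomposition to exhibit $\Gamma^{(2)}$ as the Whitney sum $(\pi^2_0)^*(C^R) \oplus_{G^{(2)}} (\pi^2_1)^* E \oplus_{G^{(2)}} (\pi^2_2)^*(C^L)$, and for Condition~2 it makes your ``fibered-products-of-submersions'' idea precise by writing the source map of $\Gamma \bitimes{\tilde q}{\tilde q} \Gamma$ as the composition $(\gamma_1,\gamma_2) \mapsto (\tilde s(\gamma_1), \tilde q(\gamma_1), \gamma_2) \mapsto (\tilde s(\gamma_1), \gamma_2) \mapsto (\tilde s(\gamma_1), \tilde s(\gamma_2))$, where the first map is $p^R$ fibered with the identity and the last is the identity fibered with $\tilde s$.
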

\begin{proof}
First, we choose a decomposition, so that we have isomorphisms  $\Gamma \iso s^* (C^L) \oplus t^* E$ and $\Gamma \iso t^* (C^R) \oplus s^* E$.  These isomorphisms allow us to decompose $\Gamma^{(2)}$ as
\[\Gamma^{(2)}  \iso (\pi^2_0)^* (C^R)  \oplus_{G^{(2)}}  (\pi^2_1)^* E  \oplus_{G^{(2)}}  (\pi^2_2)^* (C^L),\]
where $\pi^2_0, \pi^2_1, \pi^2_2 : G^{(2)} \to M$ are the three vertex maps given by $\pi^2_0(g_1, g_2) = t(g_1)$, $\pi^2_1(g_1, g_2) = s(g_1) = t(g_2)$, and $\pi^2_2(g_1,g_2) = s(g_2)$.
This shows that $\Gamma^{(2)} \to G^{(2)}$ is a vector bundle, which is Condition 3 in Definition \ref{dfn:lgocvb}.

Next, we observe that Condition 2 in Definition \ref{dfn:vboclg} reduces to checking that the source map of the groupoid $\Gamma \bitimes{\tilde q}{\tilde q} \Gamma \arrows E \bitimes{q}{q} E $ is a submersion.
 This source map can be written as the following composition:

\begin{tabular}{ccccccc}
$\Gamma \bitimes{\tilde q}{\tilde q} \Gamma$ &
$\longrightarrow$  &
$ E \bitimes{q}{s}  G   \bitimes{\id}{\tilde q}  \Gamma$ &
${\stackrel{\sim}{\longrightarrow}}$ &
$E \bitimes{q}{q \tilde s} \Gamma$ &
$\longrightarrow$  &
$E \bitimes{q}{q}  E$
\\
$(\gamma_1, \gamma_2)$ &
$\mapsto$ &
$(\tilde s (\gamma_1), \tilde q (\gamma_1), \gamma_2)$ &
$\mapsto$ &
$(\tilde s (\gamma_1), \gamma_2)$ &
$\mapsto$ &
$(\tilde s (\gamma_1), \tilde s (\gamma_2) )$
\end{tabular}

The first map in this composition is the fibered product of $p^R$ and the identity, the second map is an isomorphism, the third map is the fibered product of the identity and $\tilde q$.  Hence they are all submersions, and so is their composition.
\end{proof}


\subsection{How to obtain a representation up to homotopy from a $\VB$-groupoid}\label{sec:formulas}

Let $\Gamma$ be a $\VB$-groupoid as in \eqref{diag:vbg}.  In this section, we give formulas and some geometric explanation for the components $\boundary, \Delta^C, \Delta^E, \Omega$ that correspond (via Theorem \ref{thm:2term}) to a representation up to homotopy of $G$ on the $2$-term graded vector bundle $E \oplus C[1]$.

Although it is possible to check conditions \eqref{eq:4eq1}--\eqref{eq:unit2} directly (see Example \ref{ex:semi2}), we will not do so. Instead, we will later see in \S\ref{sec:vbsuperrep} that there is a \emph{canonically} defined complex that, given a horizontal lift, can be identified with $C(G;E \oplus C[1])$. In Appendix \ref{appendix:derivation}, we show that the formulas for $\boundary$, $\Delta^C$, $\Delta^E$, and $\Omega$ can be derived by transferring the differential from the canonical complex to $C(G;E \oplus C[1])$ and decomposing into homogeneous components. Conditions \eqref{eq:4eq1}--\eqref{eq:unit2} are then immediate consequences.

\subsubsection{The core-anchor}

The \emph{core-anchor} is a vector bundle morphism $\boundary:C \to E$, given by projection by $\tilde{t}$:
\begin{equation}\label{eq:boundary}
  \boundary c = \tilde{t}(c)
\end{equation}
 for $c \in C^R$. In the case $\Gamma = TG$, the core is the Lie algebroid $A$ of $G$, and $\boundary : A \to TM$ coincides with the anchor map.

\subsubsection{The core quasi-action}
The core quasi-action $\Delta^C$ is given by
\begin{equation}\label{eq:coreconnection}
     \Delta^C_g c = h_g(\tilde{t}(c)) \cdot c \cdot \tilde{0}_{g^{-1}}
\end{equation}
for $c \in C^R_{s(g)}$. This may be interpreted as a conjugation action of $g$ on $c$, in the sense that $h_g(\tilde{t}(c))$ is the unique horizontal element of $\Gamma_g$ by which $c$ can be left-multiplied, and $\tilde{0}_{g^{-1}}$ is the unique horizontal element of $\Gamma_{g^{-1}}$ by which $c$ can be right-multiplied. In particular, if $c$ is in $\ker \boundary$, then $\Delta^C_g c = \tilde{0}_g \cdot c \cdot \tilde{0}_{g^{-1}}$. It is clear from this formula that the induced representation on $\ker \boundary$ is canonical.

In the case $\Gamma = TG$, the right-core $C^R$ corresponds to the ``right-invariant vector field'' model of the Lie algebroid $A$ of $G$. In this model, an element $a \in A_{s(g)}$ is a vector in $T_{1_{s(g)}}G$ that is tangent to the $s$-fiber. Right-translation is well-defined for such vectors, but left-translation is not well-defined unless $a \in \ker \rho$. However, when we have chosen a ``horizontal'' subspace of $T_g G$ that is complementary to the $s$-fiber, then left-translation is possible, giving us a quasi-action of $G$ on $A$.

\subsubsection{The side quasi-action}
The side quasi-action $\Delta^E$ is given by 
\begin{equation}\label{eq:sideconnection}
     \Delta^E_g e = \tilde{t}(h_g(e))
\end{equation}
for $e \in E_{s(g)}$. Geometrically, \eqref{eq:sideconnection} simply says that $e$ is horizontally lifted to $\Gamma_g$ and then projected back to $E$ via $\tilde{t}$. If $h^\prime$ is another horizontal lift, then $h^\prime_g(e) = h_g(e) + c \cdot \tilde{0}_g$ for some $c \in C^R_{t(g)}$. Then $\tilde{t}(h^\prime_g(e)) = \tilde{t}(h_g(e)) + \tilde{t}(c) = \Delta^E_g e + \boundary c$. Thus, although $\Delta^E$ depends on the choice of $h$, the induced representation on $\coker \boundary$ is canonical.

In the case $\Gamma = TG$, we have $E = TM$, where the action of $g \in G$ on a vector $v \in T_{s(g)} M$ is given by horizontally lifting $v$ to a vector in $T_g G$ and then projecting by $Tt$.

\subsubsection{The transformation cochain}

The $2$-cochain $\Omega$ measures the failure of the horizontal lift $h$ to be multiplicative. The precise formula is
\begin{equation}\label{eq:omega}
\begin{split}
         \Omega_{g_1,g_2} e &= \left(h_{g_1g_2}(e) - h_{g_1}(\tilde{t}(h_{g_2}(e))) \cdot h_{g_2}(e) \right) \cdot \tilde{0}_{(g_1 g_2)^{-1}} \\
&= \left(h_{g_1g_2}(e) - h_{g_1}(\Delta^E_{g_2} e) \cdot h_{g_2}(e) \right) \cdot \tilde{0}_{(g_1 g_2)^{-1}}
\end{split}
\end{equation}
for $(g_1,g_2) \in G^{(2)}$ and $e \in E_{s(g_2)}$. We will say more about the geometric meaning of $\Omega$ in the case $\Gamma = TG$ in \S\ref{sec:adjoint}.

\begin{remark}
The reader may verify that, in the case $\Gamma = TG$, the formulas for $\boundary$, $\Delta^C$, $\Delta^E$, and $\Omega$ agree with the components of the adjoint representation up to homotopy as defined in \cite{aba-cra:rephomgpd}. 
\end{remark}


\subsection{More examples of $\VB$-groupoids}

An important example of $\VB$-groupoid is the tangent prolongation $TG$, which has already been mentioned many times. In this section, we describe more examples. In particular, we describe the semidirect product of $G$ with a $2$-term representation up to homotopy.

\begin{ex}  \label{ex:trivial}
Let $G \arrows M$ be a Lie groupoid and let $E \to M$ be a vector bundle.  Then, as a vector bundle, let $\Gamma \to G$ be defined as $\Gamma := t^* E \oplus s^* E$; that is
\[\Gamma = \left\{ (e_1,g,e_2) \suchthat e_1 \in E_{t(g)}, \, e_2 \in E_{s(g)}\right\}.\]
Then $\Gamma$ is the total space for a $\VB$-groupoid, with source, target, and multiplication maps given by
\begin{align*}
\tilde s(e_1, g, e_2) &= e_2,\\
\tilde t(e_1, g, e_2) &= e_1,\\
(e_1, g_1, e_2) \cdot (e_2, g_2, e_3) &= (e_1, g_1g_2, e_3).
\end{align*}

In this case, $C = E$ and $\boundary$ is the identity.  There is a one-to-one correspondence between horizontal lifts $h$ and unital quasi-actions $\Delta$ on $E$ given by
\[h_g(e) = (\Delta_g(e), g, e).\]
Given a horizontal lift, the resulting representation up to homotopy has side and core quasi-actions both equal to $\Delta$, with $\Omega$ being the ``curvature'' of $\Delta$, given by
\begin{equation*}
	\Omega_{g_1,g_2} e = \Delta_{g_1 g_2} e - \Delta_{g_1} \Delta_{g_2} e.
\end{equation*}

For the purposes of representation theory, we argue that the $\VB$-groupoid $t^*E \oplus s^*E$ plays the role of the ``trivial'' representation of $G$ on $E$, since it contains no additional information beyond the Lie groupoid $G \arrows M$ and the vector bundle $E \to M$.
\end{ex}

\begin{ex}[Semidirect product]\label{ex:semi2}
Let $G \arrows M$ be a Lie groupoid, and let $\totaldiff$ be a representation up to homotopy of $G$ on $E \oplus C[1]$. Let $(\boundary, \Delta^C, \Delta^E, \Omega)$ be the $4$-tuple corresponding to $\totaldiff$ via Theorem \ref{thm:2term}.

Let
\[\Gamma = t^* C \oplus s^*E = \{(c,g,e) \suchthat c \in C_{t(g)}, \, e \in E_{s(g)}\}. \]
We endow $\Gamma$ with a Lie groupoid structure over $E$, defined as follows. The source and target maps $\tilde{s}, \tilde{t}: \Gamma \to E$ are given by
\begin{align}
	\tilde{s}(c,g,e) &= e, \label{eq:tildesdec} \\
	\tilde{t}(c,g,e) &= \boundary c + \Delta^E_g e. \label{eq:tildetdec}
\end{align}
It is clear that $\tilde{s}$ is a submersion.  Multiplication for compatible pairs is given by
\begin{equation}  \label{eq:productdec}
(c_1, g_1, e_1) \cdot (c_2, g_2, e_2)  = \left(c_1 + \Delta^C_{g_1} c_2 - \Omega_{g_1, g_2} e_2,  g_1g_2,  e_2 \right).
\end{equation}
For $e \in E_x$, the identity over $e$ is $\tilde 1_e = (0,1_x, e)$.  Inverses are given by
\begin{equation*}
(c,g,e)^{-1} =  \left(  -\Delta^C_{g^{-1}} c  +  \Omega_{g^{-1}, g} e   ,  g^{-1} ,  \boundary c + \Delta^E_g e \right).
\end{equation*}

The maps $\tilde{s}$, $\tilde{t}$, and $\tilde{m}$ are clearly linear, and the groupoid axioms can be verified by direct computation. We point out the following:
\begin{itemize}
\item  The condition $\tilde t (\gamma_1 \cdot \gamma_2) = \tilde t (\gamma_1)$ is equivalent to  \eqref{eq:4eq1} and \eqref{eq:4eq3}.
\item  Associativity of the product is equivalent to \eqref{eq:4eq2} and \eqref{eq:4eq4}.
\item  Equations \eqref{eq:unit1} and \eqref{eq:unit2} are equivalent to the fact that the identity map $\tilde 1$ we defined is indeed an identity.
\end{itemize}

The existence of horizontal lifts means that any $\VB$-groupoid can be identified with a semidirect product, albeit noncanonically. Using this, one could give a direct proof of the fact that the structures defined in \ref{sec:formulas} satisfy the axioms of a representation up to homotopy.
\end{ex}

\section{Dual of a $\VB$-groupoid}\label{sec:dual}

Consider a $\VB$-groupoid $\Gamma$ like in \eqref{diag:vbg}, with core $C$, and let $\Gamma^* \to G$ be the dual vector bundle to $\Gamma \to G$.  Then \cite{mac:book, pradines2} there is an associated \emph{dual $\VB$-groupoid}
\begin{equation}\label{eq:dual}
 \vbgs{\Gamma^*}{C^*}{G}{M}
\end{equation}
In the case $\Gamma = TG$, the dual $\VB$-groupoid is the cotangent prolongation $T^*G \arrows A^*$.

The dual construction plays a significant role in the definition of the canonical $\VB$-groupoid complex in \S\ref{sec:vbsuperrep}, and we will briefly recall the formulas for the structure maps for later use. We refer the reader to \cite{mac:book} for details and proofs.

The source and target $\du{s}, \du{t} : \Gamma^* \arrows C^*$ are defined as follows. Let $\xi \in \Gamma_g^*$.  Then $\du s (\xi) \in C^*_{s(g)}$  and $\du t (\xi) \in C^*_{t(g)}$ are given by
\begin{align}  \label{eq:sdu}
 \pair{\du s (\xi)}{ c_1 } &= - \pair{ \xi} {\tilde 0_g \cdot c_1^{-1} },\\ \label{eq:tdu}
 \pair{\du t (\xi)}{ c_2 } &=  \pair{ \xi} {c_2 \cdot \tilde 0_g }
\end{align}
for all $c_1 \in C_{s(g)}$ and $c_2 \in C_{t(g)}$. Here and in the following, $\pair{}{}$ denotes the pairing of a vector space and its dual.

The formulas \eqref{eq:tdu}--\eqref{eq:sdu} can be derived by requiring that the short exact sequences
\begin{equation*}
\xymatrix{
s^* (E^*) \ar[r] & \Gamma^* \ar[r] & t^* (C^*)  
},
\end{equation*}
\begin{equation*}
\xymatrix{
t^* (E^*) \ar[r] & \Gamma^* \ar[r] & s^* (C^*)  
},
\end{equation*}
are dual to \eqref{eq:ses} and \eqref{eq:ses2}, respectively. In particular, we note that, for any $g \in G$, $\ker \du{t} \subseteq \Gamma^*_g$ is the annihilator of $\ker \tilde{s} \subseteq \Gamma_g$, and similarly with $s$ and $t$ reversed.

Let $(\xi_1, \xi_2) \in (\Gamma^*)^{(2)}$, where $\xi_i \in \Gamma^*_{g_i}$.  Under the assumption that $\du s(\xi_1) = \du t (\xi_2)$, the product $\xi_1 \cdot \xi_2 \in \Gamma^*_{g_1 g_2}$ is given by the formula
\begin{equation} \label{eq:defmult}
\pair{\xi_1 \cdot \xi_2}{\gamma_1 \cdot \gamma_2}  =  \pair{\xi_1}{\gamma_1} + \pair{\xi_2}{\gamma_2}
\end{equation}
for $\gamma_i \in \Gamma_{g_i}$. The formula \eqref{eq:defmult} can be interpreted as saying that the graph of multiplication in $\Gamma^*$ is, up to sign, the annihilator of the graph of multiplication in $\Gamma$.

\section{$\VB$-groupoids and representations up to homotopy}\label{sec:vbsuperrep}

In this section, we introduce the canonical $\VB$-groupoid complex, and we show that a choice of horizontal lift induces a decomposition of the complex into a $2$-term representation up to homotopy. Different choices of horizontal lift lead to gauge-equivalent representations up to homotopy. In this sense, we can think of $2$-term representations up to homotopy as simply being manifestations of the $\VB$-groupoid complex.

\subsection{$\VB$-groupoid cohomology}

Consider a $\VB$-groupoid $\Gamma$ as in \eqref{diag:vbg} with dual $\VB$-groupoid $\Gamma^*$. Let $\left( C^\bullet (\Gamma^*), \du{\delta} \right)$ be the complex of smooth groupoid cochains associated to the Lie groupoid $\Gamma^* \arrows C^*$. There is a natural subcomplex $C_\lin(\Gamma^*)$, whose $p$-cochains are functions of $(\Gamma^*)^{(p)}$ that are linear over $G^{(p)}$. We call $\left(C_\lin^\bullet(\Gamma^*), \du{\delta}\right)$ the complex of \emph{linear cochains} for the dual $\VB$-groupoid $\Gamma^*$.

\begin{dfn}\label{dfn:strongleft}
 A linear $p$-cochain $\varphi \in C^p_\lin(\Gamma^*)$ is called \emph{left-projectable} if
\begin{enumerate}
\item  
  $\varphi(\du{0}_g, \xi_1, \dots, \xi_{p-1}) = 0$, and
\item 
 $\varphi(\du{0}_g \cdot \xi_1, \dots, \xi_p) = \varphi(\xi_1, \dots, \xi_p)$,
\end{enumerate}
for all $(\xi_1, \dots, \xi_p) \in (\Gamma^*)^{(p)}$ and $g \in G$ such that $\du{t}(\xi_1) = 0_{s(g)}$.
\end{dfn}
The first condition in Definition \ref{dfn:strongleft} implies that $\varphi(\xi_1, \dots, \xi_p)$ only depends on $\xi_1$ and $\du{q}(\xi_i)$ for $2\leq i \leq p$. The second condition is a left-invariance condition for the dependence on $\xi_1$. When $p=0$, both conditions are vacuous, so the space of left-projectable $0$-cochains is $C^0_\lin(\Gamma^*) = \Gamma(C)$. The space of left-projectable $1$-cochains consists of sections $X$ of $\Gamma$ that project via $\tilde{s}$ to a section of $E$ (see Proposition \ref{prop:leftproj} below).

It follows directly from the definition of the coboundary operator $\du{\delta}$ that the left-projectable cochains form a subcomplex of $C_\lin(\Gamma^*)$. This fact allows us to make the following definition.

\begin{dfn}\label{dfn:vbcohomology}
 The \emph{$\VB$-groupoid complex} $\left( C^\bullet_\VB(\Gamma), \du{\delta} \right)$ of $\Gamma$ is the subcomplex of left-projectable cochains in $C\lin(\Gamma^*)$. The \emph{$\VB$-groupoid cohomology} of $\Gamma$ is the cohomology of the $\VB$-groupoid complex.
\end{dfn}

In \S\ref{sec:vbdecomp}, we will see that a choice of decomposition allows us to identify $C_\VB(\Gamma)$ with $C(G;E\oplus C[1])$, and that under this identification the differential $\du{\delta}$ defines a representation up to homotopy of $G$ on $E\oplus C[1]$. For now, we make the following observation, which says that $C_\VB(\Gamma)$, like $C(G;E\oplus C[1])$, is a right $C(G)$-module for which the coboundary operator satisfies the graded Leibiz rule. 

\begin{lemma} \label{lemma:strongleft}
Let $\varphi \in C^p_\lin(\Gamma^*)$ be left-projectable, and let $f \in C^q(G)$ be viewed as a fiberwise-constant element of $C^q(\Gamma^*)$. Then
\begin{enumerate}
\item $\varphi \star f$ is linear and left-projectable, and 
\item $\du{\delta}(\varphi \star f) = (\du{\delta}\varphi) \star f + (-1)^p \varphi \star (\delta f)$.
\end{enumerate}
\end{lemma}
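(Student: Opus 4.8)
The plan is to verify both statements by unwinding the definitions of left-projectability (Definition \ref{dfn:strongleft}), the product $\star$ on groupoid cochains, and the coboundary operator $\du\delta$ for the groupoid $\Gamma^* \arrows C^*$. Throughout, $f \in C^q(G)$ is regarded as the element of $C^q(\Gamma^*)$ given by $f(\xi_1,\dots,\xi_q) = f(\du q(\xi_1), \dots, \du q(\xi_q))$, i.e.\ pulled back via $\du q$; note this is fiberwise constant, hence linear, so $\varphi \star f \in C^{p+q}_\lin(\Gamma^*)$ is automatic, and the only content in part (1) is the two left-projectability conditions \eqref{eqn:strongleft1}--\eqref{eqn:strongleft2}.

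For part (1), I would write out $(\varphi \star f)(\xi_1,\dots,\xi_{p+q}) = \varphi(\xi_1,\dots,\xi_p)\, f(\du q(\xi_{p+1}),\dots,\du q(\xi_{p+q}))$ (using that $p\geq 1$; the cases $p=0$ or $q=0$ are handled by the analogues of \eqref{eqn:star1}--\eqref{eqn:star3} and are easier). For \eqref{eqn:strongleft1}, substituting $\du 0_g$ into the first slot makes the first factor $\varphi(\du 0_g, \xi_2, \dots, \xi_p)$, which vanishes because $\varphi$ is left-projectable; here I must check the composability/base-point hypotheses line up, namely that $\du t(\du 0_g) = 0_{\text{(appropriate point)}}$, which follows from \eqref{eq:tdu} since $\pair{\du t(\du 0_g)}{c} = \pair{\du 0_g}{c\cdot\tilde 0_g} = 0$. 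For \eqref{eqn:strongleft2}, I replace the first argument $\xi_1$ by $\du 0_g \cdot \xi_1$ (valid when $\du t(\xi_1) = 0_{s(g)}$); since $\du q(\du 0_g \cdot \xi_1) = g\cdot \du q(\xi_1)$ has the same $\du t$ as needed, the $f$-factor is unchanged, and the $\varphi$-factor is invariant by \eqref{eqn:strongleft2} applied to $\varphi$. So $\varphi\star f$ is left-projectable.

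For part (2), the identity $\du\delta(\varphi\star f) = (\du\delta\varphi)\star f + (-1)^p \varphi\star(\du\delta f)$ is precisely the statement that $\du\delta$ is a graded derivation with respect to $\star$ on $C^\bullet(\Gamma^*)$ — this was recorded for any Lie groupoid in the subsection ``Lie groupoid cohomology'' — so it would suffice to observe that $\du\delta f = \delta f$ under the identification of $f$ with its pullback via $\du q$. This last point is exactly the statement that $\du q : \Gamma^* \to G$ is a morphism of Lie groupoids (so pullback of cochains commutes with $\delta$), which holds because $\Gamma^*$ is a $\VB$-groupoid. Then the general Leibniz rule on $C^\bullet(\Gamma^*)$, restricted to the subcomplex of left-projectable cochains (which is closed under $\du\delta$ and, by part (1), under $\star$-ing with pullbacks of $C^\bullet(G)$), gives the claimed formula with the sign $(-1)^p = (-1)^{|\varphi|}$.

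The main obstacle I anticipate is purely bookkeeping: keeping straight which base points and which composability conditions are in play when plugging $\du 0_g$ and $\du 0_g\cdot\xi_1$ into the first slot of $\varphi$, and confirming that $\du q(\du 0_g\cdot \xi_1) = g\,\du q(\xi_1)$ together with $\du q(\du 0_g) = g$ so that the $f$-factor genuinely doesn't see the modification. None of this is deep, but it requires carefully matching the hypothesis ``$\du t(\xi_1) = 0_{s(g)}$'' of Definition \ref{dfn:strongleft} against the source/target formulas \eqref{eq:tdu}--\eqref{eq:sdu} and the multiplication \eqref{eq:defmult} for $\Gamma^*$. Once that is set up, both parts are short.
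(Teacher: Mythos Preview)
Your proposal is correct and follows exactly the route the paper has in mind: the paper's own proof reads in its entirety ``The proof is immediate from the definitions,'' and what you have written is a careful unwinding of precisely those definitions (left-projectability, the product $\star$, and the Leibniz property of $\du\delta$ combined with the fact that $\du q$ is a groupoid morphism). There is no substantive difference in approach; your write-up simply makes explicit the bookkeeping that the paper leaves to the reader.
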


\subsection{$\VB$-groupoid cohomology and decompositions}\label{sec:vbdecomp}

Let $\Gamma$ be a $\VB$-groupoid as in \eqref{diag:vbg}, and let $\varphi \in C_\VB^p(\Gamma)$ be a $\VB$-groupoid cochain. We define an associated map $\hat{\varphi}: G^{(p)} \to \Gamma$, where $\hat{\varphi}_{(g_1,\dots,g_p)} \in \Gamma_{g_1}$, by the equation
\begin{equation}\label{eq:vbhat}
 \pair{\xi_1}{\hat{\varphi}_{(g_1,\dots,g_p)}} = \varphi(\xi_1, \xi_2, \dots, \xi_p)
\end{equation}
for any $(\xi_2, \dots, \xi_p) \in (\Gamma^*)^{(p-1)}$ where $\du{q}(\xi_i) = g_i$. Condition (1) of Definition \ref{dfn:strongleft} implies that $\hat{\varphi}$ is well-defined and that $\varphi$ is completely determined by $\hat{\varphi}$. The following lemma examines the implications of Condition (2).

\begin{lemma}\label{lemma:hatleft}
For $\varphi \in C_\VB^p(\Gamma)$, let $\hat{\varphi}$ be defined as above. Then $\tilde{s}(\hat{\varphi}_{(g_1, g_2, \dots, g_p)}) = \tilde{s}(\hat{\varphi}_{(1_{t(g_2)}, g_2, \dots, g_p)})$ for all $(g_1, \dots, g_p) \in G^{(p)}$. 
\end{lemma}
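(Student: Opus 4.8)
The plan is to establish the identity by testing both sides against covectors. Set $y := s(g_1) = t(g_2)$, so that $\tilde s(\hat\varphi_{(g_1,g_2,\dots,g_p)})$ and $\tilde s(\hat\varphi_{(1_y,g_2,\dots,g_p)})$ both belong to $E_y$; it then suffices to show that they pair equally with every $\eta \in E^*_y$. The key device is the following: for $g\in G$ with $s(g)=y$, let $\tilde s_g^{*}\eta \in \Gamma^*_g$ denote the pullback of $\eta$ along the linear map $\tilde s_g\colon \Gamma_g \to E_y$, i.e.\ the element with $\pair{\tilde s_g^{*}\eta}{\gamma}=\pair{\eta}{\tilde s(\gamma)}$ for all $\gamma\in\Gamma_g$ (equivalently, the value at $g$ of the section $(p^R)^{*}\eta$ from \eqref{eq:sesdual}). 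Plugging $\tilde s_g^{*}\eta$ into the defining equation \eqref{eq:vbhat} as the first entry converts any statement about $\tilde s\circ\hat\varphi$ into one about $\varphi$, at which point the left-projectability conditions \eqref{eqn:strongleft1}--\eqref{eqn:strongleft2} become available.

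First I would record two identities in the dual $\VB$-groupoid. $(\mathrm{i})$ For $g\in G$ with $s(g)=y$, one has $\tilde s_g^{*}\eta = \du{0}_g\cdot\tilde s_{1_y}^{*}\eta$. This follows by pairing both sides with an arbitrary $\gamma\in\Gamma_g$, writing $\gamma=\gamma\cdot\tilde 1_{\tilde s(\gamma)}$, and applying the multiplication formula \eqref{eq:defmult}: the $\du{0}_g$-factor contributes $\pair{\du{0}_g}{\gamma}=0$, while the other contributes $\pair{\tilde s_{1_y}^{*}\eta}{\tilde 1_{\tilde s(\gamma)}}=\pair{\eta}{\tilde s(\tilde 1_{\tilde s(\gamma)})}=\pair{\eta}{\tilde s(\gamma)}$. $(\mathrm{ii})$ One has $\du t(\tilde s_{1_y}^{*}\eta)=0_y$: by \eqref{eq:tdu}, for $c\in C^R_y$ we get $\pair{\du t(\tilde s_{1_y}^{*}\eta)}{c}=\pair{\tilde s_{1_y}^{*}\eta}{c\cdot\tilde 0_{1_y}}$, and since $\tilde 0_{1_y}=\tilde 1_{0_y}$ while $\tilde s(c)=0_y$ (because $C^R_y=\ker\tilde s_{1_y}$, by Definition \ref{dfn:rightvert}), we have $c\cdot\tilde 0_{1_y}=c$, so the pairing equals $\pair{\eta}{\tilde s(c)}=0$. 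As a preliminary I would also note, using \eqref{eq:sdu}, that $\du s(\du{0}_g)=0_y$; together with $(\mathrm{ii})$ this guarantees that the product $\du{0}_g\cdot\tilde s_{1_y}^{*}\eta$ appearing in $(\mathrm{i})$ is genuinely composable.

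Granting these, the argument is short. Fix $\eta\in E^*_y$ and pick a composable tuple $(\xi_1,\dots,\xi_p)\in(\Gamma^*)^{(p)}$ over $(g_1,\dots,g_p)$ with $\xi_1=\du{0}_{g_1}\cdot\tilde s_{1_y}^{*}\eta$ (such tuples exist — this is already implicit in the well-definedness of $\hat\varphi$). Using the definition of $\tilde s_{g_1}^{*}$, then $(\mathrm{i})$, then \eqref{eq:vbhat}:
\begin{align*}
\pair{\eta}{\tilde s(\hat\varphi_{(g_1,\dots,g_p)})}
&= \pair{\tilde s_{g_1}^{*}\eta}{\hat\varphi_{(g_1,\dots,g_p)}}
= \pair{\du{0}_{g_1}\cdot\tilde s_{1_y}^{*}\eta}{\hat\varphi_{(g_1,\dots,g_p)}}\\
&= \varphi\bigl(\du{0}_{g_1}\cdot\tilde s_{1_y}^{*}\eta,\ \xi_2,\dots,\xi_p\bigr).
\end{align*}
By $(\mathrm{ii})$ the hypothesis $\du t(\tilde s_{1_y}^{*}\eta)=0_{s(g_1)}$ of \eqref{eqn:strongleft2} is satisfied (with $g=g_1$), so the leading $\du{0}_{g_1}$ may be dropped, leaving $\varphi(\tilde s_{1_y}^{*}\eta,\xi_2,\dots,\xi_p)$. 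Since $\du q(\tilde s_{1_y}^{*}\eta)=1_y$, one more application of \eqref{eq:vbhat} identifies this with $\pair{\tilde s_{1_y}^{*}\eta}{\hat\varphi_{(1_y,g_2,\dots,g_p)}}=\pair{\eta}{\tilde s(\hat\varphi_{(1_y,g_2,\dots,g_p)})}$. As $\eta$ was arbitrary, the two vectors in $E_y$ coincide.

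I expect the main obstacle to be purely bookkeeping around the dual $\VB$-groupoid: checking that each product appearing in $\Gamma^*$ is genuinely composable, and handling correctly the conventions for $\du s$, $\du t$ and the choice of core built into \eqref{eq:tdu} and \eqref{eq:sdu}. That care is entirely concentrated in $(\mathrm{i})$ and $(\mathrm{ii})$; once they are in place, the remainder is just an alternation of the defining equation \eqref{eq:vbhat} with the left-projectability conditions.
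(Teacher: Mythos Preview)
Your proof is correct and rests on the same underlying duality as the paper's: the covectors $\tilde s_g^{*}\eta$ are exactly the elements of $\Gamma^*_g$ with $\du t = 0$ (the image of $(p^R)^*$ in \eqref{eq:sesdual}), and pairing $\hat\varphi$ against them detects $\tilde s(\hat\varphi)$. The paper organizes the argument a little differently---it first rewrites \eqref{eqn:strongleft2} as a statement about $\hat\varphi$, introduces an auxiliary $\gamma\in\Gamma_{g_0}$ to manipulate $\pair{\du 0_{g_0}\cdot\xi}{\hat\varphi}$ via \eqref{eq:defmult}, and then reads off the $\tilde s$-equality from the resulting annihilator condition---whereas you stay on the $\varphi$ side and apply \eqref{eqn:strongleft2} directly after establishing your identities (i) and (ii). Your route is slightly more transparent, at the cost of the two small preliminary computations; the paper's avoids those but needs the extra element $\gamma$.
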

\begin{proof}
 Expressed in terms of $\hat{\varphi}$, Condition (2) in Definition \ref{dfn:strongleft} says
\begin{equation}\label{eq:hatleft}
 \pair{\du{0}_{g_0} \cdot \xi}{\hat{\varphi}_{(g_0 g_1, g_2, \dots, g_p)}} = \pair{\xi}{\hat{\varphi}_{(g_1, g_2, \dots, g_p)}}
\end{equation}
for all $(g_0, \dots, g_p) \in G^{(p+1)}$ and $\xi \in \Gamma^*_{g_1}$ such that $\du{t}(\xi) = 0_{t(g_1)}$. Pick any $\gamma \in \Gamma_{g_0}$ such that $\tilde{t}(\gamma) = \tilde{t}(\hat{\varphi}_{(g_0 g_1, g_2, \dots, g_p)})$. Then, using \eqref{eq:defmult} and the fact that $\pair{\du{0}_{g_0}}{\gamma} = 0$, we can rewrite the left side of \eqref{eq:hatleft} as $\pair{\du{0}_{g_0} \cdot \xi}{\gamma \cdot \gamma^{-1} \cdot \hat{\varphi}_{(g_0 g_1, g_2, \dots, g_p)}}=\pair{\xi}{\gamma^{-1} \cdot \hat{\varphi}_{(g_0 g_1, g_2, \dots, g_p)}}$. Thus, we have that
\begin{equation}\label{eq:hatleft2}
  \pair{\xi}{\gamma^{-1} \cdot \hat{\varphi}_{(g_0 g_1, g_2, \dots, g_p)} - \hat{\varphi}_{(g_1, g_2, \dots, g_p)}}=0.
\end{equation}
In other words, $\gamma^{-1} \cdot \hat{\varphi}_{(g_0 g_1, g_2, \dots, g_p)} - \hat{\varphi}_{(g_1, g_2, \dots, g_p)}$ is in the annihilator of $\ker \du{t} \subseteq \Gamma^*_{g_1}$, and therefore satisfies $\tilde{s}(\gamma^{-1} \hat{\varphi}_{(g_0 g_1, g_2, \dots, g_p)} - \hat{\varphi}_{(g_1, g_2, \dots, g_p)}) = 0$. We obtain the desired result by setting $g_0 = g_1^{-1}$.
\end{proof}

Conversely, given a map $\hat{\varphi}: G^{(p)} \to \Gamma$, where $\hat{\varphi}_{(g_1,\dots,g_p)} \in \Gamma_{g_1}$, satisfying the equation in Lemma \ref{lemma:hatleft}, we may define a linear cochain $\varphi \in C^p_\lin(\Gamma^*)$ by \eqref{eq:vbhat}, which will satisfy Definition \ref{dfn:strongleft}. In other words, we have the following result:

\begin{prop}\label{prop:leftproj}
The map $\varphi \mapsto \hat{\varphi}$ given by equation \eqref{eq:vbhat} is a bijection from $C^p_\VB(\Gamma)$ to the space of maps $\hat{\varphi}: G^{(p)} \to \Gamma$, where $\hat{\varphi}_{(g_1,\dots,g_p)} \in \Gamma_{g_1}$, such that $\tilde{s}(\hat{\varphi}_{(g_1, g_2, \dots, g_p)}) = \tilde{s}(\hat{\varphi}_{(1_{t(g_2)}, g_2, \dots, g_p)})$ for all $(g_1, \dots, g_p) \in G^{(p)}$. 
\end{prop}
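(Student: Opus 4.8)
The plan is to establish the claimed bijection by unwinding Definition \ref{dfn:strongleft} and checking that the two conditions \eqref{eqn:strongleft1} and \eqref{eqn:strongleft2} correspond exactly to well-definedness of $\hat\varphi$ and to the constraint $\tilde s(\hat\varphi_{(g_1,\dots,g_p)}) = \tilde s(\hat\varphi_{(1_{t(g_2)},g_2,\dots,g_p)})$. Most of the argument has already been carried out: Lemma \ref{lemma:hatleft} shows that a left-projectable cochain $\varphi$ gives rise, via \eqref{eq:vbhat}, to a map $\hat\varphi$ satisfying the stated $\tilde s$-condition, and the paragraph immediately preceding the proposition sketches the converse. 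So the proof really consists of assembling these pieces and verifying that the two assignments are mutually inverse.

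Concretely, I would proceed as follows. First I would observe that, for fixed $(g_1,\dots,g_p) \in G^{(p)}$, the fiber $\Gamma^*_{g_1}$ together with the projections $\du q(\xi_i) = g_i$ parametrizes enough test elements: given $\varphi \in C^p_\VB(\Gamma)$, condition \eqref{eqn:strongleft1} says $\varphi(\xi_1,\dots,\xi_p)$ is independent of the choice of $\xi_2,\dots,\xi_p$ lying over $g_2,\dots,g_p$ (since any two such choices differ by adding multiples of $\du 0$'s in slots $\geq 2$, and linearity plus \eqref{eqn:strongleft1} kills the difference), and it depends linearly on $\xi_1 \in \Gamma^*_{g_1}$. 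Hence \eqref{eq:vbhat} does define a unique element $\hat\varphi_{(g_1,\dots,g_p)} \in (\Gamma^*_{g_1})^* = \Gamma_{g_1}$, and $\varphi \mapsto \hat\varphi$ is injective because $\varphi$ is recovered from $\hat\varphi$ by \eqref{eq:vbhat}. Second, Lemma \ref{lemma:hatleft} gives that the image lies in the asserted space of maps. Third, for the converse direction I would start from a map $\hat\varphi$ satisfying the $\tilde s$-condition, define $\varphi$ by \eqref{eq:vbhat}, note that it is manifestly linear over $G^{(p)}$ and satisfies \eqref{eqn:strongleft1} automatically (because $\du 0_g$ pairs to zero against any $\gamma \in \Gamma_{g_1}$... more precisely, because the right-hand side of \eqref{eq:vbhat} with $\xi_1 = \du 0_{g_1}$ vanishes), and then run the computation in the proof of Lemma \ref{lemma:hatleft} in reverse to get \eqref{eqn:strongleft2}. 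Finally I would remark that these two constructions are inverse to each other, which is immediate since both are encoded by the single formula \eqref{eq:vbhat}.

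The only slightly delicate point — and the step I expect to require the most care — is verifying that \eqref{eqn:strongleft1} really does make $\hat\varphi$ \emph{well-defined}, i.e.\ that $\varphi(\xi_1,\dots,\xi_p)$ genuinely depends only on $\xi_1$ and on $g_2,\dots,g_p = \du q(\xi_2),\dots,\du q(\xi_p)$. One must check that, given composability of $(\xi_1,\dots,\xi_p)$, one can always adjust $\xi_2,\dots,\xi_p$ within their respective fibers (keeping composability and the source/target matching conditions of $\Gamma^* \arrows C^*$) so as to move between any two lifts, and that each elementary such move changes $\varphi$ by a term of the form appearing in \eqref{eqn:strongleft1}. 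This uses the structure of the right-core s.e.s.\ \eqref{eq:sesdual} of $\Gamma^*$: the ambiguity in $\xi_i$ over a fixed $g_i$ with fixed $\du s, \du t$ lives in $(p^R)^*(s^*E^*)$, i.e.\ precisely the directions detected by the $\du 0$-substitution in \eqref{eqn:strongleft1}. Once this is pinned down, the rest is bookkeeping; in particular the reversibility of the Lemma \ref{lemma:hatleft} computation was already noted by the authors, so I would simply cite it rather than rewrite it.
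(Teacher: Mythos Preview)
Your approach is essentially the paper's own: it assembles the forward direction (Lemma \ref{lemma:hatleft}), the converse (the paragraph immediately preceding the proposition), and the observation that both assignments are encoded by the single formula \eqref{eq:vbhat}. One remark on your ``delicate point'': well-definedness of $\hat\varphi$ is simpler than your final paragraph suggests, since if $(\xi_1,\xi_2,\dots,\xi_p)$ and $(\xi_1,\xi_2',\dots,\xi_p')$ are two composable lifts over the same $(g_1,\dots,g_p)$ with the same first entry, their fiberwise difference $(\du 0_{g_1},\,\xi_2-\xi_2',\,\dots,\,\xi_p-\xi_p')$ is again composable and is killed directly by linearity together with \eqref{eqn:strongleft1} --- so no slot-by-slot analysis via the short exact sequence is needed (and your phrase ``multiples of $\du 0$'s in slots $\geq 2$'' should be replaced by this observation).
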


Now suppose that $\Gamma$ is equipped with a horizontal lift $h: s^*E \to \Gamma$. Given $\varphi \in C_\VB^p(\Gamma)$, we may then decompose $\hat{\varphi}$ as in \eqref{eq:VH} to obtain $\varphi^E \in C^{p-1}(G;E)$ and $\varphi^C \in C^p(G;C)$, given by the equation
\begin{equation}\label{eq:hatce}
 \hat{\varphi}_{(g_1, \dots, g_p)} = h_{g_1}(\varphi^E_{(g_2, \dots, g_p)}) + \varphi^C_{(g_1, \dots, g_p)} \cdot \tilde{0}_{g_1}.
\end{equation}
Note that $\varphi^E$ does not depend on $g_1$, as a consequence of Lemma \ref{lemma:hatleft}.

We view the pair $(\varphi^E, \varphi^C)$ as an element of $C^{p-1}(G;E) \oplus C^p(G;C) = C(G;E \oplus C[1])^{p-1}$. The following result is immediate.
\begin{thm}\label{thm:isomorphism}
 The map $\vbiso_h : \varphi \mapsto (\varphi^E,\varphi^C)$ is an isomorphism of graded right $C(G)$-modules from $C_\VB(\Gamma)[1]$ to $C(G;E \oplus C[1])$.
\end{thm}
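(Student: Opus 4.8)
The plan is to verify that $\vbiso_h$ is (i) well-defined, (ii) a bijection, (iii) degree-preserving after the shift, and (iv) $C(G)$-linear. For well-definedness, given $\varphi \in C_\VB^p(\Gamma)$, the element $\hat\varphi_{(g_1,\dots,g_p)} \in \Gamma_{g_1}$ lies in the fiber over $g_1$, and the horizontal lift $h$ provides the decomposition \eqref{eq:VH} of that fiber as $C^R_{t(g_1)} \oplus E_{s(g_1)}$. Writing $\hat\varphi_{(g_1,\dots,g_p)}$ in the form \eqref{eq:hatce} produces a well-defined pair consisting of $\varphi^E_{(g_2,\dots,g_p)} \in E_{s(g_1)} = E_{t(g_2)}$ and $\varphi^C_{(g_1,\dots,g_p)} \in C_{t(g_1)}$. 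The only subtlety is that, a priori, $\varphi^E$ could depend on $g_1$; but Lemma \ref{lemma:hatleft} says $\tilde s(\hat\varphi_{(g_1,\dots,g_p)})$ is independent of $g_1$, and since $\tilde s(h_{g_1}(e)) = e$ and $\tilde s(\varphi^C_{(g_1,\dots,g_p)} \cdot \tilde 0_{g_1}) = 0$, the $E$-component $\varphi^E_{(g_2,\dots,g_p)}$ really only depends on $(g_2,\dots,g_p)$, so $\varphi^E \in C^{p-1}(G;E)$ is legitimate. Thus $\vbiso_h(\varphi) \in C^{p-1}(G;E) \oplus C^p(G;C)$, which is exactly $C(G;E\oplus C[1])^{p-1}$, matching the degree shift $C_\VB(\Gamma)[1]$.

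For bijectivity, I would run the construction in reverse. Given a pair $(\psi^E, \psi^C) \in C^{p-1}(G;E) \oplus C^p(G;C)$, define $\hat\varphi_{(g_1,\dots,g_p)} := h_{g_1}(\psi^E_{(g_2,\dots,g_p)}) + \psi^C_{(g_1,\dots,g_p)} \cdot \tilde 0_{g_1} \in \Gamma_{g_1}$. Because $\psi^E$ does not depend on $g_1$, the computation $\tilde s(\hat\varphi_{(g_1,\dots,g_p)}) = \psi^E_{(g_2,\dots,g_p)}$ shows that the hypothesis of Proposition \ref{prop:leftproj} is satisfied, so this $\hat\varphi$ corresponds to a unique $\varphi \in C^p_\VB(\Gamma)$. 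Uniqueness of the decomposition \eqref{eq:VH} shows that $\varphi \mapsto (\varphi^E,\varphi^C)$ and $(\psi^E,\psi^C)\mapsto \varphi$ are mutually inverse, so $\vbiso_h$ is a bijection. Linearity over $\reals$ is immediate since $h_{g_1}$ is linear and the decomposition \eqref{eq:VH} is a linear isomorphism of vector spaces fiberwise.

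It remains to check that $\vbiso_h$ is a morphism of right $C(G)$-modules. Recall that the $C(G)$-action on $C_\VB(\Gamma) \subseteq C_\lin(\Gamma^*)$ is the restriction of the $\star$-product on $C(\Gamma^*)$ with $f \in C^q(G)$ viewed as a fiberwise-constant cochain (Lemma \ref{lemma:strongleft}), while the action on $C(G;E\oplus C[1])$ is the module structure described in \S\ref{sec:cohomologyrep}. Unwinding \eqref{eq:vbhat}, for $\varphi \in C^p_\VB(\Gamma)$ and $f \in C^q(G)$ one computes directly that $\widehat{\varphi\star f}_{(g_1,\dots,g_{p+q})} = f_{(g_{p+1},\dots,g_{p+q})} \cdot \hat\varphi_{(g_1,\dots,g_p)}$ in $\Gamma_{g_1}$ (scalar multiplication by the number $f(g_{p+1},\dots,g_{p+q})$); this uses only that $f$ pulls back to a fiberwise-constant function on $(\Gamma^*)^{(q)}$ and the definition of $\star$. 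Since fiberwise scalar multiplication on $\Gamma_{g_1}$ commutes with the linear decomposition \eqref{eq:VH}, applying $h_{g_1}$ and $-\cdot\tilde 0_{g_1}$ (both linear) gives $(\varphi\star f)^E = \varphi^E \star f$ and $(\varphi\star f)^C = \varphi^C \star f$, which is precisely $C(G)$-linearity of $\vbiso_h$. The main obstacle is the bookkeeping in this last step — matching the several boundary cases ($p=0$, $q=0$) of the $\star$-product on the two sides — but each case is a direct unwinding of definitions with no genuine difficulty; the degree-shift conventions must be tracked carefully so that the grading on $C_\VB(\Gamma)[1]$ lines up with the total grading on $C(G;E\oplus C[1])$.
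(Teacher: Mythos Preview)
Your proof is correct and is precisely the verification the paper has in mind; the paper itself gives no argument beyond the sentence ``The following result is immediate,'' and your write-up simply unpacks the details (Lemma~\ref{lemma:hatleft} for well-definedness of $\varphi^E$, Proposition~\ref{prop:leftproj} for bijectivity, and the fiberwise linearity of the decomposition \eqref{eq:VH} for $C(G)$-linearity).
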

The isomorphism $\vbiso_h$ in Theorem \ref{thm:isomorphism} depends on the choice of horizontal lift and is therefore noncanonical. However, given such a choice, we may use $\vbiso_h$ to transfer the coboundary operator $\du{\delta}$ on $C_\VB(\Gamma)$ to an operator $\totaldiff_h$ on $C(G;E \oplus C[1])$. The operator $\totaldiff_h$ satisfies the Leibniz rule as a result of Lemma \ref{lemma:strongleft}, and it squares to zero and preserves normalized cochains since $\du{\delta}$ does. Thus we have the following:

\begin{cor}\label{cor:superreph}
 The operator $\totaldiff_h \defequal \vbiso_h \circ (-\du{\delta}) \circ \vbiso_h^{-1}$ on $C(G;E \oplus C[1])$ is a representation up to homotopy of $G$ on the $2$-term graded vector bundle $E \oplus C[1]$.
\end{cor}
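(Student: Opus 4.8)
The plan is to verify directly the four requirements in Definition \ref{dfn:superrep}, exploiting the fact that $\totaldiff_h$ is obtained from the coboundary operator $\du{\delta}$ of the Lie groupoid $\Gamma^* \arrows C^*$ by conjugating with the module isomorphism $\vbiso_h$ of Theorem \ref{thm:isomorphism}. Since $\vbiso_h$ is a degree-preserving isomorphism of graded right $C(G)$-modules and $\du{\delta}$ raises degree by one, $\totaldiff_h = \vbiso_h \circ (-\du{\delta}) \circ \vbiso_h^{-1}$ is automatically a degree $1$ operator on $C(G; E \oplus C[1])$; it is continuous because $\du{\delta}$ is continuous and $\vbiso_h$ is a topological isomorphism, being given by a smooth fiberwise-linear decomposition. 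The identity $\totaldiff_h^2 = \vbiso_h \circ (\du{\delta})^2 \circ \vbiso_h^{-1} = 0$ follows immediately from $\du{\delta}^2 = 0$, which holds because $\du{\delta}$ is a groupoid coboundary operator.

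For the Leibniz rule \eqref{eqn:leibniz}, I would invoke Lemma \ref{lemma:strongleft}(2): for $\varphi \in C^p_\VB(\Gamma)$ and $f \in C^q(G)$ one has $\du{\delta}(\varphi \star f) = (\du{\delta}\varphi) \star f + (-1)^p \varphi \star (\delta f)$. Writing $\omega = \vbiso_h(\varphi)$, which has degree $|\omega| = p - 1$ in $C(G; E \oplus C[1])$ because of the shift $[1]$, and using that $\vbiso_h$ and $\vbiso_h^{-1}$ are $C(G)$-module morphisms, one computes
\[
\totaldiff_h(\omega \star f) = \vbiso_h\bigl(-\du{\delta}(\varphi \star f)\bigr) = (\totaldiff_h \omega) \star f - (-1)^p\, \omega \star (\delta f) = (\totaldiff_h \omega) \star f + (-1)^{|\omega|}\, \omega \star (\delta f),
\]
where the last equality uses $-(-1)^p = (-1)^{p-1} = (-1)^{|\omega|}$. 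This is exactly \eqref{eqn:leibniz}; note that the minus sign in the definition of $\totaldiff_h$ is precisely what produces the correct sign here (and, in the $2$-term analysis of \S\ref{sec:2term}, the expected sign conventions).

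The one point requiring genuine work is that $\totaldiff_h$ preserves the space of normalized cochains, and this reduces to showing that $\vbiso_h$ identifies normalized cochains on the two sides: a $\VB$-groupoid cochain $\varphi$ is normalized (i.e.\ vanishes whenever some argument $\xi_i$ is a unit $\du{1}_\nu$ of $\Gamma^*$) if and only if both components $\varphi^E$ and $\varphi^C$ of $\vbiso_h(\varphi)$ vanish whenever some argument $g_j$ is a unit of $G$. Granting this, the claim follows because the groupoid coboundary $\du{\delta}$ preserves normalized cochains of $\Gamma^*$ as well as the left-projectable subcomplex, hence preserves their intersection, so $\totaldiff_h = \vbiso_h \circ (-\du{\delta}) \circ \vbiso_h^{-1}$ preserves normalized cochains.

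To prove the identification of normalized cochains, I would unwind the defining relation \eqref{eq:vbhat} together with the decomposition \eqref{eq:hatce}. Setting an inner argument $g_j$ ($j \geq 2$) equal to a unit forces the corresponding $\xi_j$ to lie over a unit, and evaluating $\pair{\du{1}_\nu}{\hat{\varphi}_{(\dots)}}$ using the explicit formula for the unit $\du{1}_\nu$ together with the unital horizontal-lift condition $h(e,1_x) = \tilde{1}_e$ of \eqref{eq:unitalhorlift} (so that $\hat{\varphi}_{(1_x, g_2, \dots, g_p)} = \tilde{1}_{\varphi^E_{(g_2,\dots,g_p)}} + \varphi^C_{(1_x, g_2, \dots, g_p)} \cdot \tilde{0}_{1_x}$) reduces the vanishing of $\varphi$ to the vanishing of the appropriate $\varphi^C$ or $\varphi^E$ terms; the case $j = 1$ is handled the same way, using in addition that $\varphi^E$ does not depend on $g_1$, as observed after \eqref{eq:hatce}. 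I expect this bookkeeping with units — matching units of $\Gamma^*$ against units of $G$ through the horizontal lift — to be the main obstacle; everything else is a formal consequence of Theorem \ref{thm:isomorphism} and Lemma \ref{lemma:strongleft}.
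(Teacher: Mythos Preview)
Your proposal is correct and follows essentially the same route as the paper: the text preceding the corollary argues that $\totaldiff_h$ satisfies the Leibniz rule by Lemma~\ref{lemma:strongleft}, squares to zero because $\du{\delta}$ does, and preserves normalized cochains because $\du{\delta}$ does. You are in fact more careful than the paper on the last point---the paper simply asserts that normalization is preserved, whereas you correctly identify that one must check $\vbiso_h$ matches the two notions of normalized cochain (units of $\Gamma^*$ versus units of $G$), and your sketch of that verification via \eqref{eq:hatce}, the unitality condition \eqref{eq:unitalhorlift}, and the formula for $\du{1}_\nu$ is sound.
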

The minus sign in the definition of $\totaldiff_h$ arises from the fact that the isomorphism $\vbiso_h$ involves a shift in grading.

In Appendix \ref{appendix:derivation}, we show that the components $\boundary$, $\Delta^C$, $\Delta^E$, and $\Omega$ of the representation up to homotopy $\totaldiff_h$ agree with the formulas given in \S\ref{sec:formulas}, giving us the following:

\begin{cor}\label{cor:whatitis}
The $4$-tuple $(\Delta^C, \Delta^E, \boundary, \Omega)$, as given by formulas \eqref{eq:boundary}--\eqref{eq:omega}, satisfies \eqref{eq:4eq1}--\eqref{eq:unit2} and therefore defines a representation up to homotopy.
\end{cor}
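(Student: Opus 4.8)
The plan is to deduce this corollary as a formal consequence of three ingredients that are already in place: Corollary \ref{cor:superreph}, Theorem \ref{thm:2term}, and the explicit identification of components to be carried out in Appendix \ref{appendix:derivation}. Concretely, Corollary \ref{cor:superreph} tells us that, for any choice of horizontal lift $h$, the transported operator $\totaldiff_h = \vbiso_h \circ (-\du{\delta}) \circ \vbiso_h^{-1}$ on $C(G;E\oplus C[1])$ is a genuine representation up to homotopy: it satisfies the graded Leibniz rule (via Lemma \ref{lemma:strongleft} and Theorem \ref{thm:isomorphism}), squares to zero, and preserves normalized cochains, all inherited from $\du{\delta}$. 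By Theorem \ref{thm:2term}, $\totaldiff_h$ therefore decomposes uniquely into a $4$-tuple $(\Delta^C_h, \Delta^E_h, \boundary_h, \Omega_h)$ of the stated type, which automatically satisfies \eqref{eq:4eq1}--\eqref{eq:unit2}.

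The substantive task — the one requiring actual computation, and the content of Appendix \ref{appendix:derivation} — is to verify that this $4$-tuple is precisely the one written in formulas \eqref{eq:boundary}--\eqref{eq:omega}. First I would unwind the four homogeneous components $D^C$, $D^E$, $\hat{\boundary}$, $\hat{\Omega}$ of $\totaldiff_h$ by taking a generator (a section $\varepsilon \in \Gamma(E)$ or $\alpha \in \Gamma(C)$, viewed as a $0$-cochain of $C_\VB(\Gamma)$), applying $\vbiso_h^{-1}$ and then $\du{\delta}$ — using the explicit groupoid-coboundary formula for the dual groupoid $\Gamma^*\arrows C^*$, whose structure maps $\du s, \du t, \du m$ and unit $\du 1$ are given by \eqref{eq:sdu}, \eqref{eq:tdu}, \eqref{eq:defmult} — and then re-applying $\vbiso_h$, reading off the $E$- and $C$-valued pieces through the decomposition \eqref{eq:hatce}. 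Pairing the resulting linear cochain against test elements $\xi_i \in \Gamma^*_{g_i}$ and invoking those definitions converts each identity into a statement about $\Gamma$ itself: for instance the $D^E$ computation produces $\tilde t(h_g(e))$, matching \eqref{eq:sideconnection}, and the $\hat{\Omega}$ computation produces exactly the ``failure of multiplicativity'' expression \eqref{eq:omega}. Passing from the $\totaldiff_h$-components to $(\Delta^C, \Delta^E, \boundary, \Omega)$ then uses \eqref{eqn:difftorepodd}--\eqref{eqn:difftorepeven} and Proposition \ref{prop:transformation}, and the map $F$ of \eqref{eq:rightleft} is needed to rewrite the left-core expressions appearing in $\du s$ in terms of the right-core.

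Once that identification is in hand, the corollary is immediate: the $4$-tuple of \eqref{eq:boundary}--\eqref{eq:omega} \emph{is} the $4$-tuple associated by Theorem \ref{thm:2term} to the representation up to homotopy $\totaldiff_h$, hence it satisfies \eqref{eq:4eq1}--\eqref{eq:unit2}. I expect the main obstacle to be purely bookkeeping rather than conceptual: tracking which core ($C^L$ versus $C^R$) and which horizontal lift (left versus right) appears at each stage, and correctly propagating signs — both the shift-of-grading sign (the minus in $\totaldiff_h$) and the grading-dependent sign discrepancy between \eqref{eqn:difftorepodd} and \eqref{eqn:difftorepeven} — through the pairing $\pair{\cdot}{\cdot}$ and the dual multiplication \eqref{eq:defmult}. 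No genuinely new idea is required beyond the computation in Appendix \ref{appendix:derivation}; the corollary is a repackaging of that computation together with Theorem \ref{thm:2term} and Corollary \ref{cor:superreph}.
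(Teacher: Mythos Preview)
Your proposal is correct and matches the paper's own approach essentially line for line: the corollary is stated immediately after Corollary \ref{cor:superreph} with the remark that Appendix \ref{appendix:derivation} identifies the components of $\totaldiff_h$ with the formulas \eqref{eq:boundary}--\eqref{eq:omega}, and the appendix proceeds exactly as you describe---applying $\totaldiff_h$ to $0$-cochains in $\Gamma(C)$ and $\Gamma(E)$, pairing against test elements of $\Gamma^*$, and reading off each component via the decomposition \eqref{eq:hatce}. Your anticipation of the bookkeeping issues (left versus right core, sign conventions) is also accurate.
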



\subsection{Dependence of the representation up to homotopy on the decomposition}  \label{sec:dependence}

Let $\Gamma$ be a $\VB$-groupoid as in \eqref{diag:vbg}. We would like to determine how the representation up to homotopy $\totaldiff_h$ changes under a change of horizontal lift.

Let $h$ and $\nue{h}$ be two horizontal lifts.   Let $g \in G$ and let $e \in E_{s(g)}$.  Then $\tilde s(h_g(e)) = \tilde s(\nue{h}_g(e))$, so $h_g(e) - \nue{h}_g(e)$ is vertical. We write
\begin{equation}  \label{eq:newh}
\nue{h}_g(e) = h_g(e) + \sigma_g(e) \cdot \tilde 0_g
\end{equation}
for a unique element $\sigma_g(e) \in C_{t(g)}$.  We may view $\sigma$ as a normalized element of $C^1 (G; E \to C)$.  Conversely, given a horizontal lift $h$ and a normalized transformation $1$-cochain $\sigma \in C^1 (G; E \to C)$, we can define a new horizontal lift $\nue{h}$ by \eqref{eq:newh}.  Thus, we have proven the following.
\begin{lemma}
The space of horizontal lifts is an affine space modeled on the normalized subspace of $C^1 (G; E \to C)$.
\end{lemma}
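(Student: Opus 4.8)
The statement to prove is that the space of (right)-horizontal lifts is an affine space modeled on the normalized subspace of $C^1(G; E \to C)$.

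The plan is to verify the two halves of an affine-space structure directly from the definitions, which is essentially what the surrounding text already sketches. First I would recall that a right-horizontal lift is a vector-bundle section $h: s^*E \to \Gamma$ of the right-core s.e.s.\ \eqref{eq:ses} satisfying the unitality condition \eqref{eq:unitalhorlift}. The key observation is that the difference of two such sections lands in the kernel of $p^R$, i.e.\ in the right-vertical subbundle $V^R$, which by \eqref{eq:jR} is canonically isomorphic to $t^*(C^R) = C^R \times_t G$. Concretely, given horizontal lifts $h$ and $\nue h$, for each $g \in G$ and $e \in E_{s(g)}$ we have $\tilde s(h_g(e)) = e = \tilde s(\nue h_g(e))$, so $\nue h_g(e) - h_g(e) \in V^R_g$, and hence can be written uniquely as $\sigma_g(e) \cdot \tilde 0_g$ for a unique $\sigma_g(e) \in C^R_{t(g)}$, giving formula \eqref{eq:newh}.

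Next I would check that $\sigma$, assembled over all $g$ and $e$, is a \emph{normalized} transformation $1$-cochain in $C^1(G; E \to C)$. Linearity of $\sigma_g$ in $e$ follows from the fact that both $h_g$ and $\nue h_g$ are linear (they are restrictions of vector-bundle morphisms) and that right-multiplication by $\tilde 0_g$ is linear; smoothness in $g$ follows since $h$, $\nue h$, and the bundle maps $j^R$, $p^R$ are all smooth. So $\sigma \in C^1(G; E \to C)$, meaning $\sigma_g : E_{s(g)} \to C_{t(g)}$ as required by the definition of transformation $1$-cochains. Normalization is the statement that $\sigma_{1_x} = 0$ for all $x$: this is immediate from the unitality condition, since $h(e, 1_x) = \tilde 1_e = \nue h(e, 1_x)$ forces $\sigma_{1_x}(e) \cdot \tilde 0_{1_x} = 0$, and since right-multiplication by $\tilde 0_{1_x} = \tilde 1_{0_x}$ is injective on the core, $\sigma_{1_x}(e) = 0$. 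This establishes that the difference of two horizontal lifts is a normalized element of $C^1(G; E \to C)$.

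Conversely, I would show that given a fixed horizontal lift $h$ and an arbitrary normalized $\sigma \in C^1(G; E \to C)$, the formula \eqref{eq:newh} defines a genuine right-horizontal lift $\nue h$: the map $\nue h_g(e) = h_g(e) + \sigma_g(e)\cdot\tilde 0_g$ is a vector-bundle morphism (sum of two such), it satisfies $p^R \circ \nue h = \id$ because $\tilde s(\sigma_g(e)\cdot\tilde 0_g) = 0$ and $\tilde q$ is unchanged, so $\nue h$ is a section of \eqref{eq:ses}; and normalization of $\sigma$ gives $\nue h(e, 1_x) = \tilde 1_e$, so \eqref{eq:unitalhorlift} holds. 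Together with the previous paragraph, this shows that, after fixing one horizontal lift $h$ as origin, the assignment $\sigma \mapsto \nue h$ is a bijection between the normalized subspace of $C^1(G; E \to C)$ and the set of horizontal lifts, and it is clearly compatible with the vector-space operations on $\sigma$ in the appropriate affine sense (the difference $\nue h - h'$ of two lifts built from $\sigma$ and $\sigma'$ corresponds to $\sigma - \sigma'$). This is precisely the assertion that the space of horizontal lifts is an affine space modeled on that vector space. I do not anticipate any genuine obstacle here; the only point requiring a little care is the injectivity of right-multiplication by $\tilde 0_g$ on the core (used to extract $\sigma_g$ uniquely), which follows from the isomorphism $j^R$ in \eqref{eq:jR} restricted to units, so the argument is entirely routine.
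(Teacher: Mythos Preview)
Your proposal is correct and follows essentially the same approach as the paper: the paper's proof (which appears as the paragraph immediately preceding the lemma statement) simply observes that the difference of two horizontal lifts is right-vertical and hence of the form $\sigma_g(e)\cdot\tilde 0_g$ for a unique $\sigma_g(e)\in C_{t(g)}$, that $\sigma$ is normalized, and that conversely any normalized $\sigma$ produces a new horizontal lift via \eqref{eq:newh}. You have filled in the routine details (linearity and smoothness of $\sigma$, the role of the isomorphism $j^R$, and the unitality check) that the paper leaves implicit, but the argument is the same.
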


Suppose that $h$ and $\nue h$ are two horizontal lifts related by $\sigma$ via \eqref{eq:newh}.  From \eqref{eq:hatce} and \eqref{eq:newh}, we can see that the associated ``chart transformation'' on $C(G;E \oplus C[1])$ is given by
\begin{equation}\label{eq:transformation}
	\vbiso_{\nue{h}} \circ \vbiso_{h}^{-1} = 1 - \hat{\sigma},
\end{equation}
where $\hat{\sigma}$ is the operator associated to $\sigma$ (see Proposition \ref{prop:transformation}). Therefore,
 \begin{equation}  \label{eq:changetotaldiff}
\totaldiff_{\nue{h}} = (1 - \hat{\sigma}) \circ \totaldiff_{h} \circ (1 + \hat{\sigma}).
\end{equation}
In light of Proposition \ref{prop:gauge2term}, we see that $\totaldiff_{\nue{h}}$ and $\totaldiff_h$ are gauge-equivalent, and that every element of the gauge-equivalence class of $\totaldiff_h$ appears as $\totaldiff_{\nue{h}}$ for some choice of $\nue{h}$. We summarize the result as follows:

\begin{thm} \label{thm:equivalence}
Let $\Gamma$ be a $\VB$-groupoid as in \eqref{diag:vbg}. The set of all representations up to homotopy $\totaldiff_h$ arising from $\Gamma$ for different choices of horizontal lift $h$ is equal to exactly one gauge-equivalence class of representations up to homotopy of $G$ on $C \oplus E[1]$.
\end{thm}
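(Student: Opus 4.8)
\textbf{Proof plan for Theorem \ref{thm:equivalence}.}

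The plan is to combine the two ingredients that have already been prepared: Lemma \ref{lemma:strongleft} and Theorem \ref{thm:isomorphism}, which together produce $\totaldiff_h$ for each horizontal lift $h$; and Proposition \ref{prop:gauge2term}, which gives a complete description of gauge transformations of $C(G;E\oplus C[1])$ as the operators $1+\hat\sigma$ for normalized $\sigma\in C^1(G;E\to C)$. The whole statement then splits into two inclusions: every $\totaldiff_h$ is gauge-equivalent to every other $\totaldiff_{\nue h}$ (so the collection $\{\totaldiff_h\}$ lies inside a single gauge-equivalence class), and conversely every representative of that gauge class is realised as $\totaldiff_{\nue h}$ for a suitable lift $\nue h$.

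First I would establish the affine structure on the space of horizontal lifts. Given two lifts $h$ and $\nue h$, the difference $h_g(e)-\nue h_g(e)$ is right-vertical because both have the same image under $\tilde s$, so by the identification \eqref{eq:jR} of $V^R$ with $t^*(C^R)$ it can be written uniquely as $\sigma_g(e)\cdot\tilde0_g$ for an element $\sigma_g(e)\in C_{t(g)}$; checking that $\sigma$ is linear in $e$ and normalized (using $h(e,1_x)=\tilde1_e=\nue h(e,1_x)$) shows $\sigma\in C^1(G;E\to C)$ is normalized. Conversely, \eqref{eq:newh} manufactures a new lift from any such $\sigma$. This is the content of the displayed Lemma preceding Theorem \ref{thm:equivalence}. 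Next, I would feed \eqref{eq:newh} into the defining relation \eqref{eq:hatce} for the decomposition maps: substituting $\nue h_{g_1}(e)=h_{g_1}(e)+\sigma_{g_1}(e)\cdot\tilde0_{g_1}$ and comparing the two expressions for $\hat\varphi_{(g_1,\dots,g_p)}$ yields $\varphi^E$ unchanged and $\varphi^C$ shifted by $\hat\sigma(\varphi^E)$, i.e. $\vbiso_{\nue h}\circ\vbiso_h^{-1}=1-\hat\sigma$ as in \eqref{eq:transformation}. Conjugating $\du\delta$ accordingly and recalling the sign in Corollary \ref{cor:superreph} gives \eqref{eq:changetotaldiff}: $\totaldiff_{\nue h}=(1-\hat\sigma)\circ\totaldiff_h\circ(1+\hat\sigma)$.

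To finish, I observe that $1+\hat\sigma$ is a gauge transformation: it is a degree-zero $C(G)$-module automorphism (since $\hat\sigma$ is a $C(G)$-module morphism, by Proposition \ref{prop:transformation}), its inverse is $1-\hat\sigma$ (because $\hat\sigma^2=0$ for degree reasons—two successive shifts $E\to C[1]$ land in degree $+2$), it preserves normalized cochains since $\sigma$ is normalized, and $\mu\circ(1+\hat\sigma)=\mu$ because $\hat\sigma$ raises cochain degree and hence has image in $\ker\mu$. Therefore \eqref{eq:changetotaldiff} exhibits $\totaldiff_{\nue h}$ and $\totaldiff_h$ as gauge-equivalent, giving the first inclusion. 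For the reverse inclusion, fix any lift $h$ and let $\totaldiff'$ be gauge-equivalent to $\totaldiff_h$; by Proposition \ref{prop:gauge2term} the gauge transformation is $1+\hat\sigma$ for some normalized $\sigma$, and then defining $\nue h$ from $h$ and $-\sigma$ via \eqref{eq:newh} (the sign chosen to match \eqref{eq:changetotaldiff}) gives $\totaldiff_{\nue h}=\totaldiff'$. The main obstacle is purely bookkeeping: getting the substitution of \eqref{eq:newh} into \eqref{eq:hatce} to produce exactly $1-\hat\sigma$ with the correct sign, and threading that sign through the grading shift in $\vbiso_h$ and the minus in $\totaldiff_h=\vbiso_h\circ(-\du\delta)\circ\vbiso_h^{-1}$; once the conventions are aligned, everything else is a direct appeal to the cited results.
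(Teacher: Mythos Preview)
Your proposal is correct and follows essentially the same route as the paper: establish the affine structure on horizontal lifts modeled on normalized $C^1(G;E\to C)$, derive the chart-change formula $\vbiso_{\nue h}\circ\vbiso_h^{-1}=1-\hat\sigma$ from \eqref{eq:hatce} and \eqref{eq:newh}, conjugate to obtain \eqref{eq:changetotaldiff}, and then invoke Proposition~\ref{prop:gauge2term} for both inclusions. Your write-up is slightly more explicit than the paper's in spelling out why $1+\hat\sigma$ satisfies the gauge-transformation axioms and in tracking the sign for the reverse inclusion, but the argument is the same.
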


We can expand \eqref{eq:changetotaldiff} to obtain the following gauge transformation formulas:
\begin{equation}  \label{eq:changecomponents}
\begin{split}
\nue{\boundary} & = \boundary, \\
\nue{\Delta}^C_g & = \Delta^C_g + \sigma_g \boundary, \\
\nue{\Delta}^E_g & = \Delta^E_g + \boundary \sigma_g,  \\
\nue{\Omega}_{g_1, g_2} & = \Omega_{g_1, g_2} - \sigma_{g_1} \Delta^E_{g_2} - \Delta^C_{g_1} \sigma_{g_2} + \sigma_{g_1 g_2} - \sigma_{g_1} \boundary \sigma_{g_2}
\end{split}
\end{equation}

\section{The moduli space of $\VB$-groupoids}\label{sec:moduli}

\subsection{The relationship between $\VB$-groupoids and representations up to homotopy}  \label{sec:vbgfromrep}

Corollary \ref{cor:whatitis} tells us that \eqref{eq:boundary}--\eqref{eq:omega} give a well-defined map taking $\VB$-groupoids equipped with horizontal lifts to $2$-term representations up to homotopy. This map is inverted by the semidirect product construction in Example \ref{ex:semi2}. Thus we have the following:

\begin{thm}  \label{thm:big}
There is a one-to-one correspondence between $2$-term representations up to homotopy and $\VB$-groupoids equipped with horizontal lifts.
\end{thm}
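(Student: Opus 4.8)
The plan is to establish the claimed bijection by exhibiting two constructions that are mutually inverse. In one direction, given a $2$-term representation up to homotopy $\totaldiff$ of $G$ on $E \oplus C[1]$, we pass via Theorem \ref{thm:2term} to the $4$-tuple $(\Delta^C, \Delta^E, \boundary, \Omega)$, then build the vector bundle $\Gamma_\totaldiff := t^*C \oplus s^*E \to G$ with the structure maps \eqref{eq:tildesdec}--\eqref{eq:productdec}. The preceding proposition shows this is a $\VB$-groupoid, and it comes equipped with the privileged horizontal lift $h(g,e) = (0,g,e)$; so we have a well-defined map from $2$-term representations up to homotopy to $\VB$-groupoids-with-horizontal-lift. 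In the other direction, given a pair $(\Gamma, h)$, the horizontal lift induces a vector bundle decomposition $\Gamma \iso t^*C \oplus s^*E$ via \eqref{eq:VH}, and formulas \eqref{eq:boundary}--\eqref{eq:omega} produce a $4$-tuple $(\Delta^C, \Delta^E, \boundary, \Omega)$; by the corollary at the end of \S\ref{sec:vbdecomp} this $4$-tuple satisfies \eqref{eq:4eq1}--\eqref{eq:unit2}, hence corresponds to a $2$-term representation up to homotopy via Theorem \ref{thm:2term}. This gives the map in the opposite direction.

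First I would check that starting from $\totaldiff$, building $\Gamma_\totaldiff$ with its privileged lift, and then reading off the components via \eqref{eq:boundary}--\eqref{eq:omega} returns the original $4$-tuple. This is the computation already indicated in the text immediately after the proposition: with $h(g,e) = (0,g,e)$ one has $\tilde{t}(c,1_x,0) = \boundary c$ for the core-anchor, and the conjugation and lifting formulas \eqref{eq:coreconnection}, \eqref{eq:sideconnection}, \eqref{eq:omega} unwind — using the explicit multiplication \eqref{eq:productdec} and inverse — to $\Delta^C$, $\Delta^E$, and $\Omega$ respectively. Since the correspondence of Theorem \ref{thm:2term} between $4$-tuples and operators $\totaldiff$ is itself a bijection, recovering the $4$-tuple recovers $\totaldiff$.

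Next I would check the reverse composite: starting from a pair $(\Gamma, h)$, extracting $(\Delta^C, \Delta^E, \boundary, \Omega)$ via \eqref{eq:boundary}--\eqref{eq:omega}, and then reconstructing a $\VB$-groupoid-with-lift via \eqref{eq:tildesdec}--\eqref{eq:productdec} yields a pair isomorphic to $(\Gamma, h)$ — indeed equal to it once we use the decomposition isomorphism $\Gamma \iso t^*C \oplus s^*E$ furnished by $h$ to identify the underlying vector bundles. The point is that under this identification an element $\gamma \in \Gamma_g$ is written as $h_g(e) + c \cdot \tilde{0}_g$, i.e.\ as the triple $(c, g, e)$, with $e = \tilde{s}(\gamma)$; one then verifies that $\tilde{t}(\gamma)$, the product of two such elements, and the horizontal lift itself are exactly given by \eqref{eq:tildetdec}, \eqref{eq:productdec}, and $h(g,e) = (0,g,e)$. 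This is precisely the assertion made in the sentence preceding the theorem, and it amounts to re-deriving the structure maps from their components, reversing the computations of \S\ref{sec:formulas}.

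The main obstacle is bookkeeping rather than conceptual: one must be careful that the two notions of ``$\VB$-groupoid equipped with a horizontal lift'' on the two sides are compared up to the appropriate notion of isomorphism (a $\VB$-groupoid isomorphism intertwining the horizontal lifts), and that the privileged lift $h(g,e)=(0,g,e)$ on $\Gamma_\totaldiff$ really does correspond, under the decomposition isomorphism, to the given lift $h$ on $\Gamma$ — this is what pins down the isomorphism uniquely and prevents the correspondence from being merely essentially surjective. Granting the two proposition-level computations already cited in the text (the proof that $\Gamma_\totaldiff$ is a $\VB$-groupoid, and the Appendix \ref{appendix:derivation} verification that \eqref{eq:boundary}--\eqref{eq:omega} reproduce the components of $\totaldiff_h$), the theorem follows by assembling these two round-trip checks.
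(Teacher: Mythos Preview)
Your proposal is correct and follows essentially the same approach as the paper: the paper's proof is precisely the two paragraphs preceding the theorem statement, which describe the two inverse constructions (building $\Gamma_\totaldiff$ with its privileged lift from $\totaldiff$, and recovering the structure maps \eqref{eq:tildesdec}--\eqref{eq:productdec} from the components \eqref{eq:boundary}--\eqref{eq:omega} of a given $(\Gamma,h)$), and you have simply spelled out the two round-trip checks in more detail.
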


Together, Theorems \ref{thm:equivalence} and \ref{thm:big} imply the following:
\begin{cor} \label{cor:big}
	There is a one-to-one correspondence between isomorphism classes of $2$-term representations up to homotopy and isomorphism classes of $\VB$-groupoids.
\end{cor}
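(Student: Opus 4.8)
The plan is to assemble the corollary directly from the two preceding theorems, treating the statement as essentially a bookkeeping exercise about equivalence classes rather than a fresh construction. By Theorem \ref{thm:big}, the objects ``$2$-term representation up to homotopy of $G$'' and ``$\VB$-groupoid equipped with a horizontal lift'' are in bijection; the task is to see that this bijection descends to a bijection between the respective sets of equivalence classes, where on the representation side we quotient by gauge-equivalence and on the $\VB$-groupoid side we quotient by $\VB$-groupoid isomorphism (forgetting the horizontal lift). First I would make precise the two relevant groupoid-like pictures: on one side, pairs $(\Gamma, h)$ with morphisms being $\VB$-groupoid isomorphisms (not required to intertwine the chosen lifts), and on the other side, representations up to homotopy $\totaldiff$ with morphisms being gauge transformations together with — crucially — the more general isomorphisms of \cite{aba-cra:rephomgpd} that the paper has already noted are captured by composing a gauge transformation with an ambient vector-bundle isomorphism of $E \oplus C[1]$.

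The key steps, in order, would be as follows. \emph{Step 1 (well-definedness of the forgetful map on classes).} Start with the map of Theorem \ref{thm:big} sending $\totaldiff \mapsto (\Gamma_\totaldiff, h)$ and compose with ``forget $h$,'' landing in isomorphism classes of $\VB$-groupoids. I would show this is constant on gauge-equivalence classes: if $\totaldiff' = (1+\hat\sigma)\totaldiff(1+\hat\sigma)^{-1}$, then by \eqref{eq:changecomponents} the $4$-tuples $(\Delta^C,\Delta^E,\boundary,\Omega)$ and $(\nue\Delta^C, \nue\Delta^E, \nue\boundary, \nue\Omega)$ are related precisely by a change of horizontal lift, and Theorem \ref{thm:equivalence} (or a direct check using the explicit structure maps \eqref{eq:tildetdec}--\eqref{eq:productdec}) shows $\Gamma_\totaldiff \cong \Gamma_{\totaldiff'}$ as $\VB$-groupoids via the map $(c,g,e)\mapsto (c + \sigma_g(e), g, e)$. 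Thus we obtain a well-defined map $\Phi$ from gauge-equivalence classes of $2$-term representations up to homotopy to isomorphism classes of $\VB$-groupoids. \emph{Step 2 (surjectivity).} Every $\VB$-groupoid $\Gamma$ admits a horizontal lift (the partition-of-unity argument recalled in \S\ref{sec:cores}); choosing one produces a representation up to homotopy via Corollary \ref{cor:superreph}, and its image under $\Phi$ is the class of $\Gamma$ since Theorem \ref{thm:big} says the structure maps are recovered from the $4$-tuple. \emph{Step 3 (injectivity).} Suppose $\Gamma_{\totaldiff_1} \cong \Gamma_{\totaldiff_2}$ as $\VB$-groupoids via some isomorphism $\Psi$. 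I would decompose $\Psi$: it induces isomorphisms of the side bundles $E_1 \cong E_2$ and of the cores $C_1 \cong C_2$ (the core and side bundle being canonical invariants of a $\VB$-groupoid, by \S\ref{sec:cores}), and after conjugating by these we may assume $E_1 = E_2$, $C_1 = C_2$; the residual freedom is exactly a change of horizontal lift, which by \eqref{eq:changecomponents} and Proposition \ref{prop:gauge2term} is a gauge transformation. Hence $\totaldiff_1$ and $\totaldiff_2$ are gauge-equivalent (in the slightly enlarged sense that matches ``isomorphism'' of representations up to homotopy), so $\Phi$ is injective.

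I expect the main obstacle to be \emph{Step 3}, specifically the careful handling of the fact that ``isomorphism class of $2$-term representation up to homotopy'' uses the isomorphisms of \cite{aba-cra:rephomgpd}, which are strictly more general than the gauge transformations of Definition \ref{dfn:equivalence} (the paper flags this distinction explicitly). The point to nail down is that a general $\VB$-groupoid isomorphism $\Psi: \Gamma_{\totaldiff_1} \to \Gamma_{\totaldiff_2}$ decomposes canonically as (ambient vector-bundle isomorphism of $E \oplus C[1]$, acting on the non-$h$-related part) $\circ$ (gauge transformation, absorbing the lift ambiguity), and conversely that every such composite arises from a $\VB$-groupoid isomorphism — so that the quotient really is by the correct notion of isomorphism of representations up to homotopy. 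Once this correspondence of morphisms is in place, the corollary is immediate; indeed, it is cleanest to phrase the whole argument as an equivalence of categories (a $\VB$-groupoid-with-lift category mapping to a representation-up-to-homotopy category) and then pass to isomorphism classes, with Theorem \ref{thm:big} giving essential surjectivity and fully-faithfulness and Theorems \ref{thm:equivalence}/\ref{thm:big} together controlling what happens when the lift is forgotten.
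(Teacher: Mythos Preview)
Your proposal is correct and follows the paper's approach exactly: the paper's entire proof is the single sentence ``Together, Theorems \ref{thm:equivalence} and \ref{thm:big} imply the following,'' and your three steps are precisely the unpacking of that implication. Your explicit attention to the gauge-equivalence versus isomorphism distinction (and the decomposition of a general $\VB$-groupoid isomorphism into an ambient bundle isomorphism followed by a change of lift) is more careful than what the paper spells out, but it is the natural reading of how the two theorems combine.
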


\subsection{Classification of regular $2$-term representations up to homotopy}   \label{sec:classification}

Recall from \S\ref{sec:formulas} that the core-anchor $\boundary: C \to E$ associated to a $\VB$-groupoid \eqref{diag:vbg} is independent of the choice of a horizontal lift.

\begin{dfn}  A $\VB$-groupoid (or a $2$-term representation up to homotopy) is called \emph{regular} if its core-anchor $\boundary$ has constant rank.
\end{dfn}
Clearly, a $2$-term representation up to homotopy is regular if and only if it corresponds, via Theorem \ref{thm:big}, to a regular $\VB$-groupoid.

As we have seen in \S\ref{sec:formulas}, one can recover from a $\VB$-groupoid canonical representations of $G$ on $K \defequal \ker \boundary$ and $\nu \defequal \coker \boundary$, but in general these bundles are singular. If the $\VB$-groupoid is regular, then $K$ and $\nu$ are vector bundles. 

In this section, we classify regular $\VB$-groupoids up to isomorphism. Per Corollary \ref{cor:big}, such a classification is equivalent to a classification of the moduli space of regular $2$-term representations up to homotopy. We begin by considering two special cases of regular $\VB$-groupoids.

\subsubsection{$\VB$-groupoids of type $1$}  \label{sec:type1}

\begin{dfn} A $\VB$-groupoid (or a $2$-term representation up to homotopy) is \emph{of type $1$} if its core-anchor $\boundary$ is an isomorphism.
\end{dfn}
We consider how conditions \eqref{eq:4eq1}--\eqref{eq:unit2} for the $4$-tuple $(\boundary, \Delta^C, \Delta^E, \Omega)$ specialize in the type $1$ case. We may assume that $C=E$ and $\boundary = 1$. Then, from \eqref{eq:4eq1}--\eqref{eq:4eq3} and \eqref{eq:unit1}, we have that $\Delta^C = \Delta^E$, where $\Delta^E$ is a unital quasi-action, and that $\Omega_{g_1,g_2}$ is equal to the ``curvature'' $\Delta^E_{g_1 g_2} - \Delta^E_{g_1} \Delta^E_{g_2}$ of $\Delta^E$. Then \eqref{eq:4eq4} and \eqref{eq:unit2} are automatically satisfied.

The representations up to homotopy of the type that we have just described are exactly those that arise from $\VB$-groupoids of the form in Example \ref{ex:trivial}, which do not include any more information than the Lie groupoid $G$ and the vector bundle $E$. Using Corollary \ref{cor:big}, we conclude the following.
\begin{prop}\label{prop:type1}
	Every $\VB$-groupoid of type $1$ is isomorphic to a $\VB$-groupoid of the form in Example \ref{ex:trivial}.
\end{prop}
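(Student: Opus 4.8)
The plan is to combine the analysis of the type $1$ conditions already carried out in the text with Corollary \ref{cor:big}. The key observation is that the reasoning in the paragraph preceding the statement is really a biconditional: a $2$-term representation up to homotopy is of type $1$ if and only if, after identifying $C$ with $E$ via $\boundary$, its defining $4$-tuple has the form $(\Delta, \Delta, \id, \Omega_\Delta)$ where $\Delta$ is a unital quasi-action and $\Omega_\Delta$ is its curvature $\Omega_{\Delta,(g_1,g_2)} = \Delta_{g_1g_2} - \Delta_{g_1}\Delta_{g_2}$. Indeed, given type $1$, equations \eqref{eq:4eq1} and \eqref{eq:unit1} force $\Delta^C = \Delta^E =: \Delta$ unital, \eqref{eq:4eq3} forces $\Omega = \Omega_\Delta$, and then \eqref{eq:unit2} and \eqref{eq:4eq4} hold automatically; conversely, any such tuple visibly satisfies \eqref{eq:4eq1}--\eqref{eq:unit2} and is of type $1$.

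Next I would match this description against Example \ref{ex:trivial}. That example, for a vector bundle $E \to M$ and a unital quasi-action $\Delta$ on $E$ via the horizontal lift $h_g(e) = (\Delta_g e, g, e)$, produces exactly the $4$-tuple $(\Delta, \Delta, \id, \Omega_\Delta)$. Hence the set of $2$-term representations up to homotopy of type $1$ (with $C = E$, $\boundary = \id$) coincides precisely with the set of representations up to homotopy arising from $\VB$-groupoids of the form in Example \ref{ex:trivial} for that choice of $E$. In particular, every type $1$ representation up to homotopy is gauge-equivalent to one that arises from such a $\VB$-groupoid.

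Finally, I would invoke Corollary \ref{cor:big}: isomorphism classes of $2$-term representations up to homotopy are in bijection with isomorphism classes of $\VB$-groupoids. Given a $\VB$-groupoid $\Gamma$ of type $1$, choose a horizontal lift to get a representation up to homotopy $\totaldiff_h$, which is of type $1$; by the above it is gauge-equivalent to a representation up to homotopy arising from a $\VB$-groupoid $\Gamma'$ of the form in Example \ref{ex:trivial} (with vector bundle $E$ equal to the side bundle of $\Gamma$). Gauge-equivalent representations up to homotopy have the same isomorphism class, so by Corollary \ref{cor:big} the $\VB$-groupoids $\Gamma$ and $\Gamma'$ are isomorphic, as claimed.

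**Main obstacle.** The only delicate point is bookkeeping: one must be careful that the identification $C \cong E$ via $\boundary$ is itself an isomorphism of $\VB$-groupoids (not merely of the underlying data), so that no information is lost when normalizing to $\boundary = \id$; and one must confirm that the bijection of Corollary \ref{cor:big} genuinely sends the gauge-equivalence class of $\totaldiff_h$ to the isomorphism class of $\Gamma$, which is exactly the content of Theorems \ref{thm:equivalence} and \ref{thm:big}. Once these identifications are in place, the argument is essentially a matching of the two explicit lists of structure maps.
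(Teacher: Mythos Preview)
Your proposal is correct and follows essentially the same route as the paper: the paper's ``proof'' is the paragraph immediately preceding the proposition, which analyzes the type~$1$ constraints on the $4$-tuple, matches the result with the $4$-tuples arising from Example~\ref{ex:trivial}, and then invokes Corollary~\ref{cor:big}. Your write-up simply makes the logic more explicit, and the bookkeeping concern you flag (normalizing $C\cong E$ via $\boundary$) is exactly what the paper sweeps under the phrase ``we may assume that $C=E$ and $\boundary=1$.''
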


\subsubsection{$\VB$-groupoids of type 0}  \label{sec:type0} 

\begin{dfn} A $\VB$-groupoid (or a $2$-term representation up to homotopy) is \emph{of type $0$} if its core-anchor $\boundary$ is the zero map.
\end{dfn}
In the type $0$ case, $\boundary=0$ and \eqref{eq:4eq1} holds automatically. Equations \eqref{eq:4eq2}, \eqref{eq:4eq3}, and \eqref{eq:unit1} say that $\Delta^C$ and $\Delta^E$ are representations of $G$ on $C$ and $E$, respectively. Then we may interpret \eqref{eq:4eq4} and \eqref{eq:unit2} as saying that $\Omega \in C^2(G; E \to C)$ is normalized and closed with respect to the differential $D$ in \eqref{eq:homdiff}. 

Next, we consider how a type $0$ representation up to homotopy transforms under a gauge transformation. From \eqref{eq:changecomponents}, we see that the representations $\Delta^C$ and $\Delta^E$ are invariant, while $\Omega$ changes by an exact term:
\begin{equation} \label{eqn:dsigma}
\nue{\Omega} = \Omega - D\sigma
\end{equation}
\begin{prop}\label{prop:type0}
\begin{enumerate}
	\item A type $0$ representation up to homotopy of $G$ on $E \oplus C[1]$ is given by a triple $(\Delta^C, \Delta^E, \Omega)$, where $\Delta^C$ and $\Delta^E$ are representations of $G$ on $C$ and $E$, respectively, and $\Omega \in C^2(G;E \to C)$ is a normalized cocycle.
	\item Two such triples $(\Delta^C, \Delta^E, \Omega)$ and $(\nue{\Delta}^C, \nue{\Delta}^E, \nue{\Omega})$ are gauge-equivalent if and only if $\Delta^C = \nue{\Delta}^C$, $\Delta^E = \nue{\Delta^E}$, and $\Omega$ is cohomologous to $\nue{\Omega}$.
\end{enumerate}
\end{prop}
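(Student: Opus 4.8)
The plan is to read off both statements directly from the specialization of Theorem \ref{thm:2term} and from equation \eqref{eq:changecomponents}, after setting $\boundary = 0$. For part (1), I would begin by recalling Theorem \ref{thm:2term}, which identifies type $0$ representations up to homotopy on $E \oplus C[1]$ with $4$-tuples $(\Delta^C, \Delta^E, 0, \Omega)$ satisfying \eqref{eq:4eq1}--\eqref{eq:unit2}. With $\boundary = 0$, equation \eqref{eq:4eq1} is automatic, and \eqref{eq:4eq2}, \eqref{eq:4eq3}, together with the unitality condition \eqref{eq:unit1}, say precisely that $\Delta^C$ and $\Delta^E$ are honest representations of $G$ on $C$ and $E$ respectively (a flat unital quasi-action is a representation, as observed after Definition \ref{dfn:unitalflat}). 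Once $\Delta^C$ and $\Delta^E$ are representations, the differential $D$ of \eqref{eq:homdiff} on $C(G; E \to C)$ is defined, and I would observe that \eqref{eq:4eq4} is exactly the statement that $(D\Omega)_{g_1,g_2,g_3} = 0$, i.e.\ $\Omega$ is $D$-closed; the remaining condition \eqref{eq:unit2} is that $\Omega$ is normalized. This establishes the bijection in part (1); I would also note that the discarded $4$-tuple data is genuinely a triple since the third slot is forced to be $0$.

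For part (2), I would invoke Proposition \ref{prop:gauge2term}, which says every gauge transformation has the form $1 + \hat{\sigma}$ for a normalized $\sigma \in C^1(G; E \to C)$, together with the transformation formulas \eqref{eq:changecomponents}. Setting $\boundary = 0$ in \eqref{eq:changecomponents}, the first three lines collapse to $\nue{\boundary} = 0$, $\nue{\Delta}^C_g = \Delta^C_g$, and $\nue{\Delta}^E_g = \Delta^E_g$, so the two representations are gauge-invariant; the last line becomes $\nue{\Omega}_{g_1,g_2} = \Omega_{g_1,g_2} - \sigma_{g_1}\Delta^E_{g_2} - \Delta^C_{g_1}\sigma_{g_2} + \sigma_{g_1 g_2}$, which, upon comparison with \eqref{eq:homdiff} applied to the $1$-cochain $\sigma$, is exactly $\nue{\Omega} = \Omega - D\sigma$ (this is equation \eqref{eqn:dsigma}). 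Hence a gauge transformation exists relating $(\Delta^C, \Delta^E, \Omega)$ to $(\nue{\Delta}^C, \nue{\Delta}^E, \nue{\Omega})$ if and only if $\Delta^C = \nue{\Delta}^C$, $\Delta^E = \nue{\Delta}^E$, and $\nue{\Omega} - \Omega = -D\sigma$ for some normalized $\sigma$, i.e.\ $\Omega$ and $\nue{\Omega}$ are cohomologous in the complex $(C^\bullet(G; E \to C), D)$ of normalized cochains.

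The one point requiring a little care — and the closest thing to an obstacle — is the sign and normalization bookkeeping: I must check that the combination of terms in the last line of \eqref{eq:changecomponents} with $\boundary=0$ genuinely matches $+D\sigma$ or $-D\sigma$ as defined in \eqref{eq:homdiff} (the paper already records the answer as $-D\sigma$ in \eqref{eqn:dsigma}, so this is just a matter of citing it), and that $D\sigma$ is again normalized when $\sigma$ is, so that the cohomology class lives in the normalized subcomplex. Both of these are immediate from the formulas, so the proof is essentially a matter of assembling already-established facts; I would keep it to a few lines, citing Theorem \ref{thm:2term}, Proposition \ref{prop:gauge2term}, and equations \eqref{eq:changecomponents} and \eqref{eqn:dsigma}.
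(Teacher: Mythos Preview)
Your proposal is correct and follows essentially the same route as the paper: the proposition is stated as the conclusion of a short discussion that specializes Theorem \ref{thm:2term} and the gauge-transformation formulas \eqref{eq:changecomponents} to the case $\boundary = 0$, exactly as you outline. Your explicit citation of Proposition \ref{prop:gauge2term} and your remark about the normalized subcomplex are slightly more careful than the paper's treatment at this point (the paper defers the normalization issue to a remark after Corollary \ref{cor:classtype0}), but the argument is the same.
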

\begin{cor}  \label{cor:classtype0}
Type $0$ $\VB$-groupoids like \eqref{diag:vbg} are classified up to isomorphism by triples $(\Delta^C, \Delta^E, [\Omega])$, where $\Delta^C$ and $\Delta^E$ are representations of $G$ on $C$ and $E$, respectively, and $[\Omega]$ is a cohomology class in $H^2(G; E \to C)$.
\end{cor}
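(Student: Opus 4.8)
The plan is to deduce this directly from Corollary~\ref{cor:big} and Proposition~\ref{prop:type0}, the only real work being to check that the correspondence of Corollary~\ref{cor:big} restricts to type~$0$ objects on both sides and that the resulting equivalence relations match up. First I would observe that, since the core-anchor $\boundary$ of a $\VB$-groupoid is canonical (independent of the horizontal lift, by~\eqref{eq:boundary}) and coincides with the $\boundary$-component of any associated $2$-term representation up to homotopy, a $\VB$-groupoid is of type~$0$ if and only if every representation up to homotopy arising from it is of type~$0$. Hence the bijection of Corollary~\ref{cor:big} restricts to a bijection between isomorphism classes of type~$0$ $\VB$-groupoids (with side bundle $E$ and core $C$) and gauge-equivalence classes of type~$0$ $2$-term representations up to homotopy on $E \oplus C[1]$.

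Next I would invoke Proposition~\ref{prop:type0}(1) to identify a type~$0$ representation up to homotopy with a triple $(\Delta^C, \Delta^E, \Omega)$, where $\Delta^C$ and $\Delta^E$ are genuine representations of $G$ on $C$ and $E$ and $\Omega \in C^2(G;E\to C)$ is normalized and closed with respect to the differential~\eqref{eq:homdiff} determined by $\Delta^C$ and $\Delta^E$. By Proposition~\ref{prop:type0}(2), two such triples are gauge-equivalent precisely when $\Delta^C = \nue{\Delta}^C$, $\Delta^E = \nue{\Delta}^E$, and $\Omega$ is cohomologous to $\nue{\Omega}$; passing to the quotient, gauge-equivalence classes of type~$0$ representations up to homotopy are in bijection with triples $(\Delta^C, \Delta^E, [\Omega])$ with $[\Omega] \in H^2(G;E\to C)$. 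Composing with the bijection of the previous paragraph yields the stated classification.

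I do not expect a serious obstacle: the computational content has already been absorbed into Theorems~\ref{thm:big} and~\ref{thm:equivalence} and Proposition~\ref{prop:type0}. The one point that deserves care is bookkeeping about equivalence: one must interpret ``isomorphism of $\VB$-groupoids'' on the left exactly as in Corollary~\ref{cor:big}, so that it matches gauge-equivalence rather than the coarser notion of isomorphism of representations up to homotopy. A second, purely expository, subtlety is that the group $H^2(G;E\to C)$ in which $[\Omega]$ lives itself depends on the chosen pair $(\Delta^C,\Delta^E)$, so the classifying set of triples is not literally a Cartesian product but fibers over the set of pairs of representations with fiber the corresponding degree-$2$ cohomology; I would state this explicitly so the reader is not misled.
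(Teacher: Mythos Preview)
Your proposal is correct and follows exactly the paper's approach: the paper states this corollary as an immediate consequence of Proposition~\ref{prop:type0} together with Corollary~\ref{cor:big}, and your two expository caveats (about the meaning of ``isomorphism'' and the dependence of $H^2(G;E\to C)$ on $(\Delta^C,\Delta^E)$) are precisely the points the paper records in its subsequent remarks. The one additional subtlety the paper flags, which you do not mention, is that the $\Omega$'s and $\sigma$'s arising here are always \emph{normalized}, so identifying gauge-equivalence classes with classes in the full $H^2(G;E\to C)$ implicitly uses the fact that the complex retracts onto its normalized subcomplex.
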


\begin{remark}
The cochains $\Omega$ appearing in the above analysis are always normalized. The conclusion in Corollary \ref{cor:classtype0} uses the fact that the complex $C(G; E \to C)$ retracts to the subcomplex of normalized cochains. The proof given by Eilenberg and Maclane \cite{eil-mac:1} in the case of group cohomology can be easily extended to the present context.
\end{remark}

\subsubsection{The general case}

Given two $\VB$-groupoids
\begin{equation*}
\vbgs{\Gamma_0}{E_0}{G}{M} 
\quad \quad
\vbgs{\Gamma_1}{E_1}{G}{M} 
\end{equation*}
over the same Lie groupoid $G$, we can form the \emph{direct sum} $\VB$-groupoid
\begin{equation*}
\vbgs{\Gamma_0 \oplus \Gamma_1}{E_0 \oplus E_1}{G}{M} 
\end{equation*}
Note that the core of $\Gamma_0 \oplus \Gamma_1$ is the direct sum of the cores of $\Gamma_0$ and $\Gamma_1$.

\begin{lemma} \label{lemma:0and1}
Given a regular $\VB$-groupoid $\Gamma$, there exist unique (up to isomorphism) $\VB$-groupoids $\Gamma_0$ of type $0$, and $\Gamma_1$ of type $1$, such that $\Gamma$ is isomorphic to $\Gamma_0 \oplus \Gamma_1$.
\end{lemma}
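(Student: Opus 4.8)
The plan is to translate everything into the language of $2$-term representations up to homotopy via Theorem~\ref{thm:big} and Corollary~\ref{cor:big}, decompose the corresponding $4$-tuple, and transport the decomposition back. So I would begin by choosing a horizontal lift of $\Gamma$, producing a $4$-tuple $(\Delta^C,\Delta^E,\boundary,\Omega)$ as in \S\ref{sec:formulas}. Since the construction $\totaldiff\mapsto\Gamma_\totaldiff$ of \S\ref{sec:vbgfromrep} (formulas \eqref{eq:tildesdec}--\eqref{eq:productdec}) is manifestly additive, it is enough to show that the lift can be chosen so that $(\Delta^C,\Delta^E,\boundary,\Omega)$ is the direct sum of a type $0$ tuple and a type $1$ tuple, and then that the two summands are forced up to isomorphism.

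For existence, regularity provides subbundles $K:=\ker\boundary\subseteq C$ and $E':=\im\boundary\subseteq E$; I would fix complements $C=K\oplus C'$ and $E=E'\oplus\nu'$, so that $\psi:=\boundary|_{C'}\colon C'\to E'$ is an isomorphism and $\nu'\cong\nu:=\coker\boundary$. Equation~\eqref{eq:4eq1} shows $\Delta^C$ preserves $K$ and $\Delta^E$ preserves $E'$, and \eqref{eq:4eq2}--\eqref{eq:4eq3} show that the induced quasi-actions $\Delta^K$ on $K$ and $\Delta^\nu$ on $E/E'\cong\nu'$ are flat, i.e.\ the canonical representations of \S\ref{sec:formulas}. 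I would then adjust the horizontal lift by a smooth, normalized $\sigma\in C^1(G;E\to C)$ chosen blockwise: the component of $\sigma$ mapping $E'$ into $K$ is used to cancel the ``$C'\to K$'' block of $\Delta^C$ (possible since $\psi$ is invertible; second line of~\eqref{eq:changecomponents}), the component mapping $\nu'$ into $C'$ is used to cancel the ``$\nu'\to E'$'' block of $\Delta^E$ (third line of~\eqref{eq:changecomponents}), and all other components of $\sigma$ are set to zero; because the blocks being cancelled vanish at units, $\sigma$ is automatically normalized, hence corresponds to an honest change of horizontal lift. After this change, $\Delta^C=\Delta^K\oplus D$ and $\Delta^E=\Delta^\nu\oplus\Delta'$ are block-diagonal, with $D$ on $C'$ and $\Delta'$ on $E'$ unital quasi-actions intertwined by $\psi$ (again by~\eqref{eq:4eq1}). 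Then \eqref{eq:4eq2}--\eqref{eq:4eq3}, using the flatness of $\Delta^K$ and $\Delta^\nu$, force $\Omega$ to be block-diagonal: its ``$E'\to K$'' and ``$\nu'\to C'$'' blocks vanish, its ``$E'\to C'$'' block is the curvature of $D$ transported by $\psi$, and its ``$\nu'\to K$'' block defines a normalized $\Omega_0\in C^2(G;\nu\to K)$ (normalization by~\eqref{eq:unit2}) that is closed for the differential~\eqref{eq:homdiff} of $(\Delta^\nu,\Delta^K)$ (by~\eqref{eq:4eq4}). Thus the $4$-tuple is the direct sum of the type $0$ tuple $(\Delta^K,\Delta^\nu,0,\Omega_0)$ and the type $1$ tuple $(D,\Delta',\psi,\text{curv }D)$, and Theorem~\ref{thm:big} together with additivity yields $\Gamma\cong\Gamma_0\oplus_G\Gamma_1$ with $\Gamma_0$ of type $0$ and $\Gamma_1$ of type $1$.

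For uniqueness, suppose $\Gamma\cong\Gamma_0\oplus_G\Gamma_1$ with $\Gamma_0$ of type $0$ and $\Gamma_1$ of type $1$. Any such isomorphism carries the core-anchor of $\Gamma_0\oplus\Gamma_1$ to that of $\Gamma$, hence identifies the core of $\Gamma_0$ with $\ker\boundary$, the core (equivalently the side) of $\Gamma_1$ with $\im\boundary$, and the side of $\Gamma_0$ with $\coker\boundary$. By Proposition~\ref{prop:type1}, $\Gamma_1$ is isomorphic to the trivial $\VB$-groupoid on $\im\boundary$ of Example~\ref{ex:trivial}, so its isomorphism class is pinned down by $\Gamma$ alone. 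By Corollary~\ref{cor:classtype0}, $\Gamma_0$ is determined by the representations on its core and side together with a class in $H^2(G;\coker\boundary\to\ker\boundary)$; the representations are the canonical representations of $G$ on $\ker\boundary$ and $\coker\boundary$ from \S\ref{sec:formulas}, hence intrinsic to $\Gamma$, and the class is the $[\Omega_0]$ produced above. Finally I would check that $[\Omega_0]$ is independent of the choices: by~\eqref{eqn:dsigma} (the type $0$ specialization of~\eqref{eq:changecomponents}) a change of horizontal lift alters $\Omega_0$ by a coboundary, and an analogous computation shows a change of the complements $C'$, $\nu'$ also alters $\Omega_0$ by a coboundary, so $[\Omega_0]$ is an invariant of the isomorphism class of $\Gamma$ and $\Gamma_0$ is thereby determined.

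The main obstacle is the bookkeeping in the block-diagonalization step: one must verify that the prescribed $\sigma$ really does cancel the intended blocks of $\Delta^C$ and $\Delta^E$ while leaving the others under control, and then read off carefully from~\eqref{eq:4eq2}--\eqref{eq:4eq4} that $\Omega$ becomes block-diagonal with precisely the stated summands. On the uniqueness side, the point requiring genuine care is the independence of $[\Omega_0]$ under a change of the complements $C'$, $\nu'$; as with the change of lift, this should reduce to exhibiting an explicit primitive built from~\eqref{eq:4eq1}--\eqref{eq:4eq4}.
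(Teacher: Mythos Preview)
Your approach is essentially the paper's own: choose splittings of $C$ and $E$, use a gauge transformation (change of horizontal lift) to block-diagonalize $\Delta^C$ and $\Delta^E$, then read off from \eqref{eq:4eq2}--\eqref{eq:4eq3} that $\Omega$ is forced to be block-diagonal; uniqueness comes from Proposition~\ref{prop:type1}, Corollary~\ref{cor:classtype0}, and the gauge-invariance of $[\omega]$. Your account is in fact more explicit than the paper's---the paper simply asserts that a suitable gauge transformation exists and that ``it can be seen'' that $[\omega]$ is independent of the choices, whereas you indicate precisely which blocks of $\sigma$ to use and correctly flag the independence under change of complements as the point needing care.
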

\begin{proof}
Let $\Gamma$ be a regular $\VB$-groupoid as in \eqref{diag:vbg}.  Let $K \defequal \ker \boundary$, $F \defequal \im \boundary$, and $\nu \defequal \coker \boundary$. As a result of the regularity condition, we have that $K$, $F$, and $\nu$ are all vector bundles that fit into the short exact sequences
\begin{equation}\label{eqn:knushort}
\begin{split}
    &\xymatrix{K \ar[r] & C \ar[r] & F}, \\
&\xymatrix{F \ar[r] & E \ar[r] & \nu}.
\end{split}
\end{equation}
We choose splittings of the sequences \eqref{eqn:knushort}, giving isomorphisms
\begin{equation} \label{eqn:1TI}
\begin{split} 
C & \approx K \oplus F \\
E & \approx \nu \oplus F 
\end{split}
\end{equation}
Next, we make a choice of horizontal lift for $\Gamma$.  Such a choice gives us a representation up to homotopy $(\boundary, \Delta^C, \Delta^E, \Omega)$, which completely describes the $\VB$-groupoid structure of $\Gamma$.  Each of the components may be written in ``block-matrix'' form with respect to the direct sums in \eqref{eqn:1TI}.  In particular, the block form of $\boundary$ is $\left(\begin{smallmatrix}0&0\\0&1\end{smallmatrix}\right)$. By \eqref{eq:4eq1}, the block forms of $\Delta^C$  and $\Delta^E$ are of the form
\begin{align*}
     \Delta^C &= \begin{pmatrix} \Delta^K & \Lambda^C \\ 0 & \Delta^F \end{pmatrix} &
     \Delta^E &= \begin{pmatrix} \Delta^\nu & 0 \\ \Lambda^E & \Delta^F \end{pmatrix}.
\end{align*}
Here, $\Delta^K$ and $\Delta^\nu$ are the canonical representations of $G$ on $K$ and $\nu$, respectively, and $\Delta^F$ is a quasi-action on $F$ that depends on the choice of horizontal lift. 

We can block-diagonalize $\Delta^C$ and $\Delta^E$ by setting
\begin{equation*}
\sigma = \begin{pmatrix} 0 & -\Lambda^C \\ -\Lambda^E & 0 \end{pmatrix}
\end{equation*}
and making the associated gauge transformation (see \eqref{eq:changecomponents}). From \eqref{eq:4eq2}--\eqref{eq:4eq3}, we then see that, in the new gauge, $\Omega$ takes the form
\begin{equation*}
     \Omega = \begin{pmatrix} \omega & 0 \\ 0 & R^F \end{pmatrix},
\end{equation*}
where $R^F$ is the curvature of $\Delta^F$. Thus, for an appropriate choice of horizontal lift, the associated representation up to homotopy decomposes as the direct sum of a type $0$ representation up to homotopy on $\nu \oplus K[1]$ and a type $1$ representation up to homotopy on $F \oplus F[1]$.

For uniqueness, we use the classification of type $1$ and type $0$ $\VB$-groupoids in Proposition \ref{prop:type1} and Corollary \ref{cor:classtype0}. As a type $1$ $\VB$-groupoid, $\Gamma_1$ is determined up to isomorphism by the vector bundle $F$. As a type $0$ $\VB$-groupoid, $\Gamma_0$ is determined up to isomorphism by the representations $\Delta^\nu$ and $\Delta^K$ and the cohomology class $[\omega] \in H^2(G;\nu \to K)$. Both $\Delta^\nu$ and $\Delta^K$ are canonical, and it can be seen that the cohomology class $[\omega]$ is independent of the choices.
\end{proof}

The following result ties together the results in this section.
\begin{thm} \label{thm:classification}
Let $G \arrows M$ be a Lie groupoid, and let $E, C \to M$ be vector bundles. Regular $\VB$-groupoids over $G$ with side bundle $E$ and core $C$ are classified up to isomorphism by the following data:
\begin{itemize}
\item a regular bundle map $\boundary: C \to E$,
\item a representation $\Delta^K$ of $G$ on $K \defequal \ker \boundary$,
\item a representation $\Delta^{\nu}$ of $G$ on  $\nu \defequal \coker \boundary$, and
\item a cohomology class $[\omega] \in H^2(G; \nu \to K)$.
\end{itemize}
Therefore, regular representations up to homotopy of $G$ on $E \oplus C[1]$ are classified up to isomorphism by the same data.
\end{thm}

\section{Example: the adjoint representation}\label{sec:adjoint}

Let $G \arrows M$ be a regular Lie groupoid, and let $A \to M$ be the Lie algebroid of $G$, with anchor map $\rho:A \to TM$. Consider the $\VB$-groupoid $TG \arrows TM$, which plays the role of the adjoint representation. By Theorem \ref{thm:classification}, there is a canonically-defined cohomology class $[\omega] \in H^2(G; \nu \to K)$, where $\nu \defequal \coker \rho$ and $K \defequal \ker \rho$. In this section, we give a geometric interpretation of the class $[\omega]$ in some nice cases.

\subsection{Restriction to orbits}
Before interpreting $[\omega]$, we require some notation and terminology. For $x \in M$, the \emph{orbit} through $x$ is $\orbit_x \defequal t(s^{-1}(x)) \subseteq M$. If the $s$-fibers of $G$ are connected, then $\orbit_x$ is the leaf through $x$ of the integrable distribution $F \defequal \im \rho \subseteq TM$, where $\rho: A \to TM$ is the anchor map for the Lie algebroid $A$ of $G$. We may restrict $G$ to a (immersed) Lie subgroupoid $G|_{\orbit_x} \arrows \orbit_x$, where $G|_{\orbit_x} \defequal s^{-1}(\orbit_x)$. On the other hand, it is clear that the disjoint union of $G|_{\orbit_x}$ (taken over all distinct $\orbit_x$) is isomorphic to $G$ as a set-theoretic groupoid. Intuitively, we should be able to recover the Lie groupoid structure of $G$ from ``gluing data'' describing how the different $G|_{\orbit_x}$ fit together.

\subsection{Bundles of Lie groups}

We first consider the case where $s=t$, so that $G$ is a bundle of Lie groups over $M$. In this case, each orbit consists of a single point $x \in M$, and in particular, the anchor map $\rho$ is zero, so $TG$ is a type $0$ $\VB$-groupoid. 

Let $h: G \times_M TM \to TG$ be a horizontal lift. Note that, in this situation, $h$ is equivalent to a connection on the fiber bundle $G \to M$. 

Since the representation of $G$ on $TM$ is trivial in this case, the formula for $\Omega$ in \eqref{eq:omega} simplifies slightly:
\begin{equation*}
     \Omega_{g_1, g_2} v = (h_{g_1 g_2}(v) - h_{g_1}(v) \cdot h_{g_2}(v)) \cdot \tilde{0}_{(g_1 g_2)^{-1}}
\end{equation*}
for $g_1, g_2 \in G_x$ and $v \in T_x M$. We can see that $\Omega$ measures the failure of parallel transport by $h$ to induce group homomorphisms of the fibers. Therefore, $G$ is locally trivializable as a Lie group bundle if and only if it is possible to choose $h$ such that $\Omega$ vanishes, and the cohomology class $[\Omega]$ can thus be viewed as an obstruction to such local trivializability.

\subsection{Regular Lie groupoids}
We now consider a more general case of a regular Lie groupoid $G \arrows M$. Let $x$ be a point in $M$. To avoid potential technical issues, we will assume that the orbit space is nice near $\orbit_x$; specifically, we assume that the quotient $M/\modsim$, where points in the same orbit are identified, has a smooth structure near $\orbit_x$ such that $\orbit_x$ is a regular value of the quotient map. As a result of this assumption, the quotient map induces an isomorphism between $\nu_y$ and $T_{\orbit_x} (M/\modsim)$ for all $y \in \orbit_x$, and therefore any $v \in \nu_x$ can be canonically extended to a section $\tilde{v}$ of $\nu|_{\orbit_x}$. 

Suppose that we have chosen splittings $TM \approx \nu \oplus F$ and $A \approx K \oplus F$, where $F = \im \rho$, and a horizontal lift $h: s^*(TM) \to TG$ such that the quasi-actions $\Delta^{TM}$ and $\Delta^{A}$, as well as the $2$-cochain $\Omega$, are block-diagonal with respect to the chosen splittings. The existence of such an $h$ is part of the proof of Lemma \ref{lemma:0and1}. Because of block-diagonality, we have that $\Delta^{TM}|_\nu = \Delta^\nu$, and one can see that $\Delta^\nu_g v = \tilde{v}_{t(g)}$ for any $v \in \nu_x$ and $g \in s^{-1}(x)$. We may then express the formula for $\omega = \Omega|_\nu$ as
\begin{equation*}
     \omega_{(g_1,g_2)} v \cdot \tilde{0}_{g_1 g_2} = h_{g_1 g_2}(\tilde{v}_x) - h_{g_1}(\tilde{v}_y) \cdot h_{g_2}(\tilde{v}_x)
\end{equation*}
for $v \in \nu_x$ and $(g_1, g_2) \in G^{(2)}$ such that $s(g_2) = x$. In other words, $\omega$ measures the infinitesimal failure of parallel transport by $h$ in the normal directions to give isomorphisms of the restricted groupoids $G|_{\orbit_x}$. 

We may give a more simple intepretation in the nicest case, where the orbit space is smooth and the quotient map $M \to M/\modsim$ is a submersion. In this case, we may think of $G$ as a ``bundle of transitive Lie groupoids'' over $M/\modsim$, where the fiber over $\orbit_x \in M/\modsim$ is the Lie groupoid $G|_{\orbit_x}$. It is possible to choose $h$ such that $\omega$ vanishes if and only if this bundle of transitive Lie groupoids is locally trivializable (so a ``transitive Lie groupoid bundle''). Therefore, the cohomology class $[\omega]$ can be viewed as an obstruction to such local trivializability.

It would be nice to find a simple interpretation of $[\omega]$ that does not require so many technical assumptions.


\section{The Fat category (groupoid)} \label{sec:fat}

In \cite{elw}, Evens, Lu, and Weinstein observed that the $1$-jet prolongation groupoid $J^1 G$ of a Lie groupoid $G \arrows M$, consisting of $1$-jets of bisections, carries natural representations on the Lie algebroid $A$ of $G$ and on $TM$. They noted that, although these representations do not pass to representations of $G$ on $A$ and $TM$, there is a sense in which they induce a ``representation up to homotopy'' on the complex $A \tolabel{\rho} TM$. However, the Arias Abad-Crainic notion of representation up to homotopy \cite{aba-cra:rephomgpd} which we have used in this paper differs from that of Evens-Lu-Weinstein. 

In this section, we describe a construction for $\VB$-groupoids that generalizes the $1$-jet prolongation groupoid, thus providing a conceptual link between the two notions of representation up to homotopy. Specifically, given a $\VB$-groupoid $\Gamma$, we construct a Lie groupoid $\fat{\Gamma} \to M$, with canonical representations on $C$ and $E$, such that $\fat{\Gamma} = J^1 G$ when $\Gamma = TG$. We then briefly describe how the formulas in \S\ref{sec:formulas} for the components of a representations up to homotopy can be derived directly from an object that is very closely related to $\fat{\Gamma}$.

\subsection{The fat category}
Let $\Gamma$ be a $\VB$-groupoid. The \emph{fat category} $\fatcat{\Gamma}$ consists of pairs $(g,H)$, where $g \in G$ and $H \subseteq \Gamma_g$ is a subspace that is complementary to the right-vertical subspace $V^R_g$ (see Definition \ref{dfn:rightvert}). There is an obvious projection map $\fatcat{\Gamma} \to G$, where the fiber over $g \in G$ is an affine space modeled on $\Hom(E_{s(g)},C_{t(g)})$. Thus, $\fatcat{\Gamma}$ is a smooth manifold.

As the name suggests, $\fatcat{\Gamma}$ has the structure of a Lie category, where the source and target maps factor through the projection $\fatcat{\Gamma} \to G$ and the multiplication is given by $(g_1, H_1) \cdot (g_2, H_2) = (g_1 g_2, H_1 H_2)$, where 
\begin{equation*}
 H_1 H_2 = \{ v_1 \cdot v_2 \suchthat v_i \in H_i \mbox{ and } \tilde{s}(v_1) = \tilde{t}(v_2)\}.    
\end{equation*}
If $(g,H)$ is invertible, then its inverse is simply $(g^{-1}, H^{-1})$. However, it is possible for $(g,H)$ to be noninvertible; this occurs exactly when $H$ fails to be complementary to $V^L_g$.

\subsection{The fat groupoid}
The \emph{fat groupoid} $\fat{\Gamma}$ consists of all invertible elements of $\fatcat{\Gamma}$, i.e.\ pairs $(g,H)$, where $g \in G$ and $H \subseteq \Gamma_g$ is a subspace complementary to both $V^R_g$ and $V^L_g$. The elements of $\fat{\Gamma}$ form an open subset of $\fatcat{\Gamma}$, so $\fat{\Gamma}$ naturally inherits a smooth Lie groupoid structure.

In the case where $\Gamma = TG$, the fat groupoid $\fat{\Gamma}$ consists of pairs $(g, H)$, where $g \in G$ and $H \subseteq T_g G$ is a subspace complementary to both the source-fiber and the target-fiber; in other words, $H$ is the $1$-jet of a bisection of $G$. Thus, in this case, the fat groupoid $\fat(TG)$ is the $1$-jet prolongation groupoid $J^1 G$.

\subsection{Representations of the fat category (groupoid)}
The fat category has canonical Lie category representations $\psi^C$ and $\psi^E$ on the vector bundles $C$ and $E$, respectively, defined as follows. For $e \in E_{s(g)}$ and $c \in C_{s(g)}$, 
\begin{align*}
\psi^E_{(g,H)} e &= \tilde{t}(v),\\
\psi^C_{(g,H)} c &= w \cdot c \cdot \tilde{0}_{g^{-1}}
\end{align*}
where $v$ is the unique vector in $H$ such that $\tilde{s}(v) = e$, and $w$ is the unique vector in $H$ such that $\tilde{s}(w) = \tilde{t}(c)$. The representations $\psi^C$ and $\psi^E$ restrict in the obvious way to produce Lie groupoid representations of the fat groupoid, which we will also denote as $\psi^C$ and $\psi^E$.

Recall that the core-anchor $\boundary: C \to E$ is given by $\boundary(c) = \tilde{t}(c)$. It follows that the representations $\psi^C$ and $\psi^E$ are related by the core-anchor: $\boundary \psi^C = \psi^E \boundary$.

\subsection{Sections and representations up to homotopy}
We would like to pass the canonical representations $\psi^C$ and $\psi^E$ of $\fat{\Gamma}$ to $G$. The obvious way to do so would be to choose a section of the projection map $\fat{\Gamma} \to G$ and then use the section to pull $\psi^C$ and $\psi^E$ back to $G$. However, such a section does not always exist globally. On the other hand, global sections do always exist for the projection $\fatcat{\Gamma} \to G$; indeed, such a section is equivalent to a section of the short exact sequence \eqref{eq:ses}. 

We may impose the additional requirement that a unit $1_x$ lift to $(1_x, \tilde{1}(E_x))$. Sections of $\fatcat{\Gamma} \to G$ satisfying this requirement are equivalent to (right-)horizontal lifts of $\Gamma$. Specifically, given a horizontal lift $h: s^* E \to \Gamma$, the map $g \mapsto \hat{g} \defequal (g, h_g(E_{s(g)}))$ is a section of $\fatcat{\Gamma} \to G$. If we use this section to pull the representations $\psi^C$ and $\psi^E$ back to $G$, we immediately recover the formulas \eqref{eq:coreconnection}-\eqref{eq:sideconnection}. 

In general, we can't expect the lift $g \mapsto \hat{g}$ to respect multiplication, which is why $\Delta^C$ and $\Delta^E$ are only quasi-actions and not representations. The failure of the lift to respect multiplication is measured by $\widehat{g_1 g_2} - \hat{g_1} \cdot \hat{g_2}$, which can be identified with $\Omega_{g_1,g_2}$ as given by \eqref{eq:omega}.

\appendix
\section{Derivation of the representation up to homotopy formulas}
\label{appendix:derivation}

\subsection{Horizontal lifts and dual pairings}

Let $\Gamma$ be a $\VB$-groupoid. Throughout this section, we will assume that $\Gamma$ has a fixed right-horizontal lift $h : s^*E \to \Gamma$. By \eqref{eq:VH}, any $\gamma \in \Gamma_g$ may be uniquely written as
\begin{equation*}
 \gamma = \gamma^V \cdot \tilde{0}_g + h_g(\gamma^H),
\end{equation*}
where $\gamma^H = \tilde{s}(\gamma) \in E_{s(g)}$ and $\gamma^V \in C_{t(g)}$. We refer to $\gamma^V$ and $\gamma^H$, respectively, as the vertical and horizontal parts of $\gamma$. Similarly, any $\xi \in \Gamma^*_g$ may be uniquely written as
\begin{equation*}
 \xi = \du{0}_g \cdot \xi^V + \eta_g(\xi^H),
\end{equation*}
where $\xi^H = \du{t}(\xi) \in C^*_{t(g)}$ and $\xi^V \in E^*_{s(g)}$, with $E^*$ identified with the \emph{left}-core of $\Gamma^*$. Here, $\eta : t^*C^* \to \Gamma^*$ is the left-horizontal lift given by the equation
\begin{equation}\label{eq:dualpairc}
 \pair{\eta_g(\nu)}{\gamma} = \pair{\nu}{\gamma^V}
\end{equation}
for $\nu \in C^*_{t(g)}$ and $\gamma \in \Gamma_g$. Equation \eqref{eq:dualpairc} defines a one-to-one correspondence between right-horizontal lifts on $\Gamma$ and left-horizontal lifts on $\Gamma^*$. We also have the following equation, which is a consequence of the fact that the natural inclusion $s^*E^* \to \Gamma^*$, $(g,\tau) \mapsto \du{0}_g \cdot \tau$, is the dual of the projection $\Gamma \to s^*E$, $\gamma \mapsto (\tilde{q}(\gamma),\gamma^H)$:
\begin{equation}\label{eq:dualpaire}
 \pair{\du{0}_g \cdot \tau}{\gamma} = \pair{\tau}{\gamma^H}.
\end{equation}
Together, \eqref{eq:dualpairc} and \eqref{eq:dualpaire} allow us to simply express the pairing of $\gamma$ and $\xi$ as
\begin{equation}\label{eq:dualpair}
 \pair{\xi}{\gamma} =  \pair{\xi^V}{\gamma^H} + \pair{\xi^H}{\gamma^V}.
\end{equation}
In particular, horizontal elements of $\Gamma^*$ annihilate horizontal elements of $\Gamma$, and vertical elements of $\Gamma^*$ annihilate vertical elements of $\Gamma$. 

We conclude the section with a lemma that will be useful in \S\ref{sec:components}.

\begin{lemma}\label{lemma:duspair}
For any $\xi \in \Gamma^*_g$ and $c \in C_{s(g)}$,
\begin{equation*}
 \pair{\du{s}(\xi)}{c} = \pair{\xi^V}{-\tilde{t}(c)} + \pair{\xi^H}{h_g(\tilde{t}(c))\cdot c \cdot \tilde{0}_{g^{-1}}}.
\end{equation*}
\end{lemma}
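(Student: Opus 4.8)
The plan is to reduce the statement to a computation of the horizontal--vertical decomposition of a single element of $\Gamma_g$ and then apply the pairing formula \eqref{eq:dualpair}. First I would note that the map $d \mapsto \tilde{0}_g \cdot d$ is linear on $C^L_{s(g)}$, since it is (a restriction of) the isomorphism $j^L$ from \eqref{eq:ses2}; combined with the fact that both $c^{-1}$ and $-c^{-1} = c - \tilde{1}_{\tilde{t}(c)}$ lie in $C^L_{s(g)}$ (see \eqref{eq:rightleft}), equation \eqref{eq:sdu} becomes $\pair{\du{s}(\xi)}{c} = -\pair{\xi}{\tilde{0}_g \cdot c^{-1}} = \pair{\xi}{\gamma}$, where $\gamma := \tilde{0}_g \cdot (-c^{-1}) \in \Gamma_g$ is precisely the image of $(c,g)$ under the map in \eqref{eq:ses3}. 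Applying \eqref{eq:dualpair} then gives $\pair{\du{s}(\xi)}{c} = \pair{\xi^V}{\gamma^H} + \pair{\xi^H}{\gamma^V}$, so it suffices to identify $\gamma^H = -\tilde{t}(c)$ and $\gamma^V = h_g(\tilde{t}(c)) \cdot c \cdot \tilde{0}_{g^{-1}}$ (which equals $\Delta^C_g c$ by \eqref{eq:coreconnection}).

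For the horizontal part I would use $\tilde{s}(\gamma) = \tilde{s}(-c^{-1})$, the identity $-c^{-1} = c - \tilde{1}_{\tilde{t}(c)}$ from \eqref{eq:rightleft}, the fiberwise linearity of $\tilde{s}$, the fact that $\tilde{s}(c) = 0_{s(g)}$ because $c$ is right-vertical, and the unit axiom $\tilde{s}\circ\tilde{1} = \id$; these give $\gamma^H = \tilde{s}(\gamma) = -\tilde{t}(c)$.

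The vertical part is the heart of the argument. Here I would invoke the interchange law (Condition 4 of Definition \ref{dfn:vbg}) for the composable pairs $(h_g(\tilde{t}(c)),\, \tilde{1}_{\tilde{t}(c)})$ and $(\tilde{0}_g,\, -c^{-1})$, which lie over $(g,\, 1_{s(g)})$, to compute
\begin{equation*}
 h_g(\tilde{t}(c)) \cdot c = \bigl(h_g(\tilde{t}(c)) + \tilde{0}_g\bigr)\bigl(\tilde{1}_{\tilde{t}(c)} - c^{-1}\bigr) = h_g(\tilde{t}(c)) + \tilde{0}_g \cdot (-c^{-1}) = h_g(\tilde{t}(c)) + \gamma ,
\end{equation*}
where the first equality is fiberwise addition (using $-c^{-1} = c - \tilde{1}_{\tilde{t}(c)}$), the second is the interchange law together with the unit identity $h_g(\tilde{t}(c))\cdot\tilde{1}_{\tilde{t}(c)} = h_g(\tilde{t}(c))$, and the last is the definition of $\gamma$. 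Since $h_g$ is linear and $\gamma^H = -\tilde{t}(c)$, this rearranges to $\gamma - h_g(\gamma^H) = h_g(\tilde{t}(c)) \cdot c$; comparing with the decomposition $\gamma = \gamma^V \cdot \tilde{0}_g + h_g(\gamma^H)$ from \eqref{eq:VH} gives $\gamma^V \cdot \tilde{0}_g = h_g(\tilde{t}(c)) \cdot c$, and right-multiplying by $\tilde{0}_{g^{-1}}$ (using $\tilde{0}_g \cdot \tilde{0}_{g^{-1}} = \tilde{1}_{0_{t(g)}}$ and that $\gamma^V$ is right-vertical) yields $\gamma^V = h_g(\tilde{t}(c)) \cdot c \cdot \tilde{0}_{g^{-1}}$. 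Substituting both parts into \eqref{eq:dualpair} completes the proof.

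I expect the only genuine obstacle to be the bookkeeping in the interchange-law step: verifying composability and $\tilde{q}$-compatibility of the two pairs, and invoking the correct groupoid unit and inverse identities $\tilde{0}_{g_1}\tilde{0}_{g_2} = \tilde{0}_{g_1 g_2}$, $\tilde{0}_{1_x} = \tilde{1}_{0_x}$ for $\VB$-groupoids. Everything else is a direct substitution into \eqref{eq:dualpair}.
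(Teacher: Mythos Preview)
Your proposal is correct and follows essentially the same route as the paper: both reduce to computing the horizontal and vertical parts of $\gamma = \tilde{0}_g \cdot (-c^{-1})$ and then apply \eqref{eq:dualpair}, with the vertical part obtained via the interchange law applied to exactly the same four elements $h_g(\tilde{t}(c))$, $\tilde{1}_{\tilde{t}(c)}$, $\tilde{0}_g$, $-c^{-1}$. The only differences are cosmetic (you organise the interchange-law step slightly differently and spell out the composability checks more explicitly).
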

\begin{proof}
From the definition of $\du{s}$ in \eqref{eq:sdu}, we have
\begin{equation*}
 \pair{\du{s}(\xi)}{c} =  \pair{\xi}{\tilde{0}_g \cdot (-c^{-1})},
\end{equation*}
which may be decomposed via \eqref{eq:dualpair}. The horizontal part of $\tilde{0}_g \cdot (-c^{-1})$ is $\tilde{s}(\tilde{0}_g \cdot (-c^{-1})) = \tilde{s}(-c^{-1}) = -\tilde{t}(c)$. To obtain the vertical part, we subtract the horizontal lift of the horizontal part:
\begin{equation*}
\begin{split}
 (\tilde{0}_g \cdot (-c^{-1}))^V \cdot \tilde{0}_g &= \tilde{0}_g \cdot (-c^{-1}) + h_g(\tilde{t}(c))\\
&=  \tilde{0}_g \cdot (-c^{-1}) + h_g(\tilde{t}(c)) \cdot \tilde{1}_{\tilde{t}(c)}\\
&= (\tilde{0}_g + h_g\tilde{t}(c))\cdot(-c^{-1} + \tilde{1}_{\tilde{t}(c)})\\
&= h_g\tilde{t}(c) \cdot c.
\end{split}
\end{equation*}
In the last line, we have used \eqref{eq:rightleft}. Thus the vertical part of $\tilde{0}_g \cdot (-c^{-1})$ is $h_g(\tilde{t}(c))\cdot c \cdot \tilde{0}_{g^{-1}}$, and the result follows from \eqref{eq:dualpair}.
\end{proof}

\subsection{Formulas for representation up to homotopy components}\label{sec:components}
We wish to show that the formulas \eqref{eq:boundary}--\eqref{eq:omega} for the four components $\boundary$, $\Delta^C$, $\Delta^E$, and $\Omega$ agree with the representation up to homotopy $\totaldiff_h$ in Corollary \ref{cor:superreph}. We will do this by applying $\totaldiff_h$ to $0$-cochains $\alpha \in \Gamma(C) = C^0(G;C)$ and $\varepsilon \in \Gamma(E) = C^0(G;E)$, and showing that the resulting cochains decompose as
\begin{align}
 \totaldiff_h \alpha &= D^C \alpha + \boundary \alpha, \label{eq:totalalpha}\\
\totaldiff_h \varepsilon &= \Omega \varepsilon + D^E \varepsilon. \label{eq:totalepsilon}
\end{align}

First, we consider $\totaldiff_h \alpha$. By alternatively viewing $\alpha$ as a section of $C$ and as a linear function on $C^*$, we may write $\vbiso_h^{-1} \alpha = \alpha$, so $\totaldiff_h \alpha = -\vbiso_h (-\du{\delta} \alpha)$. By the definition of the differential $\du{\delta}$, 
\begin{equation*}
-\du{\delta}\alpha (\xi) = \pair{\du{t}(\xi)}{\alpha_{t(g)}} - \pair{\du{s}(\xi)}{\alpha_{s(g)}}
\end{equation*}
for $\xi \in \Gamma^*_g$. Using \eqref{eq:tdu}, \eqref{eq:dualpair}, and Lemma \ref{lemma:duspair}, we can rewrite this as
\begin{equation*}
\begin{split}
&\pair{\xi^H}{\alpha_{t(g)} - h_g(\tilde{t}(\alpha_{s(g)})) \cdot \alpha_{s(g)} \cdot \tilde{0}_{g^{-1}}} + \pair{\xi^V}{\tilde{t}(\alpha_{s(g)})} \\
&= \pair{\xi^H}{\alpha_{t(g)} - \Delta^C_g \alpha_{s(g)}} + \pair{\xi^V}{\boundary \alpha_{s(g)}}.
\end{split}
\end{equation*}
Using \eqref{eqn:difftorepodd} and comparing \eqref{eq:hatce} with \eqref{eq:dualpair}, we conclude that \eqref{eq:totalalpha} does indeed hold.

Next, we consider $\totaldiff_h \varepsilon$. Equations \eqref{eq:vbhat} and \eqref{eq:hatce} imply that $\vbiso_h^{-1}\varepsilon \in C^1_\VB(\Gamma)$ is given by
\begin{equation*}
 \vbiso_h^{-1}\varepsilon(\xi) = \pair{\xi}{h_g (\varepsilon_{s(g)})}
\end{equation*}
for $\xi \in \Gamma^*_g$. By the definition of $\du{\delta}$, 
\begin{equation*}
\begin{split}
-\du{\delta}( \vbiso_h^{-1} \varepsilon) (\xi_1, \xi_2) &= -\vbiso_h^{-1} \varepsilon(\xi_2) + \vbiso_h^{-1} \varepsilon(\xi_1 \cdot \xi_2) - \vbiso_h^{-1} \varepsilon(\xi_1) \\
&= -\pair{\xi_2}{h_{g_2}(\varepsilon_{s(g_2)})} + \pair{\xi_1 \cdot \xi_2}{h_{g_1 g_2}(\varepsilon_{s(g_2)})} - \pair{\xi_1}{h_{g_1}(\varepsilon_{s(g_1)})}
\end{split}
\end{equation*}
for $(\xi_1,\xi_2) \in (\Gamma^*)^{(2)}_{(g_1,g_2)}$. Using \eqref{eq:omega} and \eqref{eq:defmult}, we can rewrite the middle term as
\begin{equation*}
\begin{split}
&\pair{\xi_1 \cdot \xi_2}{\Omega_{g_1,g_2}\varepsilon_{s(g_2)} \cdot \tilde{0}_{g_1g_2} + h_{g_1}(\Delta^E_{g_2} \varepsilon_{s(g_2)}) \cdot h_{g_2}(\varepsilon_{s(g_2)})} \\
&= \pair{\xi_1}{\Omega_{g_1,g_2}\varepsilon_{s(g_2)} \cdot \tilde{0}_{g_1} + h_{g_1}(\Delta^E_{g_2} \varepsilon_{s(g_2)})} + \pair{\xi_2}{h_{g_2}(\varepsilon_{s(g_2)})}.
\end{split}
\end{equation*}
Substituting this into the previous equation and using \eqref{eq:dualpair}, we get
\begin{equation*}
\begin{split}
-\du{\delta}( \vbiso_h^{-1} \varepsilon) (\xi_1, \xi_2) &= \pair{\xi_1}{\Omega_{g_1,g_2}\varepsilon_{s(g_2)} \cdot \tilde{0}_{g_1} + h_{g_1}(\Delta^E_{g_2} \varepsilon_{s(g_2)} - \varepsilon_{s(g_1)})} \\
&= \pair{\xi_1^H}{\Omega_{g_1,g_2}\varepsilon_{s(g_2)}} + \pair{\xi_1^V}{ \Delta^E_{g_2} \varepsilon_{s(g_2)} - \varepsilon_{s(g_1)}} .
\end{split}
\end{equation*}
Using \eqref{eqn:difftorepeven} and comparing \eqref{eq:hatce} with \eqref{eq:dualpair}, we conclude that \eqref{eq:totalepsilon} does indeed hold.

\bibliographystyle{abbrv}
\bibliography{bibio}
\end{document}